\theoremstyle{plain} 
\newtheorem*{theo*}{Theorem}
\newtheorem*{cor*}{Corollary}
\newtheorem*{con*}{Conjecture}
\newtheorem{theo}{Theorem}[section] 
\newtheorem{prop}[theo]{Proposition}
\newtheorem{cor}[theo]{Corollary}
\newtheorem{lem}[theo]{Lemma}
\newtheorem{con}[theo]{Conjecture}
\theoremstyle{definition}
\theoremstyle{definition}
\newtheorem{defin}[theo]{Definition}
\theoremstyle{remark}
\newtheorem{rem}[theo]{Remark}
\newtheorem*{rem*}{Remark}
\newtheorem*{cap*}{Caption}
\newcommand{\Z}{\mathbb{Z}}
\newcommand{\Zplus}{\Z_{>0}}
\newcommand{\Zpluseq}{\Z_{\geq0}}
\newcommand{\Q}{\mathbb{Q}}
\newcommand{\R}{\mathbb{R}}
\newcommand{\C}{\mathbb{C}}
\newcommand{\T}{\mathbb{T}}
\newcommand{\Diff}{\operatorname{Diff}}
\newcommand{\Vir}{\mathfrak{Vir}}
\newcommand{\Aut}{\operatorname{Aut}}
\newcommand{\Hom}{\operatorname{Hom}}
\newcommand{\Mod}{\operatorname{Mod}}
\newcommand{\Rep}{\operatorname{Rep}}
\newcommand{\Repu}{\operatorname{Rep}^{\operatorname{u}}}
\newcommand{\Repf}{\operatorname{Rep}^{\operatorname{f}}}
\newcommand{\Com}{\operatorname{Com}}
\newcommand{\J}{\mathcal{J}}
\newcommand{\A}{\mathcal{A}}
\newcommand{\B}{\mathcal{B}}
\newcommand{\F}{\mathcal{F}}
\newcommand{\parzero}{{\overline{0}}}
\newcommand{\parone}{{\overline{1}}}
\newcommand{\half}{\frac{1}{2}}
\newcommand{\Y}{\mathcal{Y}}
\newcommand{\scalar}{(\cdot|\cdot)}
\newcommand{\curlyscalar}{\{\cdot|\cdot\}}
\newcommand{\pairing}{\langle\cdot,\cdot\rangle}
\newcommand{\one}{\mathbf{1}}
\newcommand{\id}{\operatorname{id}}
\begin{document}

\author[Tiziano Gaudio]{Tiziano Gaudio}
	\address{School of Mathematical Sciences, Lancaster University, Lancaster LA1 4YF, UK \\
			E-mail: {\tt t.gaudio2@lancaster.ac.uk}
	}

\title[Unitarity and strong graded locality of holomorphic VOSAs with $c\leq 24$]{Unitarity and strong graded locality of holomorphic vertex operator superalgebras with central charge at most 24
}

\begin{abstract}
	We prove that all nice holomorphic vertex operator superalgebras (VOSAs) with central charge at most 24 and with non-trivial odd part are unitary, apart from the hypothetical ones arising as fake copies of the shorter moonshine VOSA or of the latter tensorized with a real free fermion VOSA.
	Furthermore, excluding the ones with central charge 24 of glueing type III and with no real free fermion, we show that they are all strongly graded-local. 
	In particular, they naturally give rise to holomorphic graded-local conformal nets. 
	In total, we are able to prove that 910 of the 969 nice holomorphic VOSAs with central charge 24 and with non-trivial odd part are strongly graded-local, without counting hypothetical fake copies of the shorter moonshine VOSA tensorized with a real free fermion VOSA.
\end{abstract}

\maketitle
	
\tableofcontents

\section{Introduction}

One of the motivations for working with the theory of vertex operator superalgebras (VOSAs) is its role as \textit{purely algebraic} axiomatization of chiral Conformal Field Theory (CFT).
Indeed, a vertex operator can be interpreted as one of the two chiral components, living on a light-ray, of a conformally covariant two-dimensional quantum field. 
From a more mathematical point of view, vertex operators are algebraic data deriving from a chiral CFT version of a Wightman field theory \cite{SW64}, see e.g.\ \cite[Chapter 1]{Kac01} and \cite{RTT22, CRTT}.
Another mathematical axiomatizations of chiral CFT is the so called \textit{operator algebraic approach} via the Haag-Kastler axioms \cite{Haa96, Ara10}, producing graded-local conformal nets of von Neumann algebras, see e.g.\ \cite{GF93, CKL08} and references therein. 
In \cite{CKLW18}, the authors show under which conditions a vertex operator algebra (VOA) gives rise to a (local) conformal net and vice versa. This covers the chiral CFTs of Bose type only, as anticommutation relations are not taken into account. In \cite{CGH}, this correspondence is generalized to involve VOSAs and graded-local conformal nets, so that also Fermi models can be included.
In particular, we now know that it is possible to define in a natural way a graded-local conformal net from a VOSA if the latter satisfies two conditions: the \textit{unitarity}, which is the first step towards the \textit{strong graded locality} (the term ``graded'' is omitted in the Bose cases for obvious reasons).
We stress that almost all of known examples of graded-local conformal nets are obtained from unitary VOSAs. Even more, while not all VOSAs are unitary, there are no examples of unitary VOSAs which are known to not satisfy the strong graded locality. 
On the one hand, von Neumann algebras act on Hilbert spaces and thus we can not expect to construct graded-local conformal nets from non-unitary VOSAs.
On the other hand, it is conjectured that all unitary VOSAs are strongly graded-local.
In support of this conjecture, many examples of unitary VOSAs have been proved to be strongly graded-local, see \cite[Section 8]{CKLW18}, \cite[Section 7]{CGH}, \cite{CTW23, CTW22} and \cite[Section 5]{CGGH23}.

Many interesting VOSA examples are strongly rational. This requirement asks for some technical properties on the VOSA, that is it must be self-contragredient, of CFT type and regular (those properties imply also simplicity, $C_2$-cofiniteness and rationality). 
In particular, strongly rational VOSAs have a finite number of irreducible modules, giving rise, in the local case, to a modular tensor category \cite{HL95I, HL95II, HL95III, Hua95, Hua08}.
A strongly rational VOA is said to be completely unitary if it is strongly unitary, that is it is unitary together with all its modules, and these unitary modules form a \textit{unitary} modular tensor category with a \textit{natural} tensor structure determined in \cite{Gui19I, Gui19II}. A list of completely unitary VOAs with references is given in e.g.\ \cite[Table 1]{CGGH23}.
Moreover, VOSA extensions and their unitarity are well-described in this categorical setting \cite{HKL15, Gui22, CGGH23}, cf.\ also \cite{CKL20, CKM24}. 

The above framework is used in \cite{CGGH23} to prove the unitarity and the strong locality of strongly rational holomorphic VOAs with central charge $c=24$ (a different proof of the unitarity of these VOAs is given in \cite{Lam23}).
Recall that a nice (this is a technical condition whose definition we can ignore at this stage) or just strongly rational VOSA is said to be \textit{holomorphic} if its unique irreducible module is its adjoint module.
In particular, holomorphic VOAs are characterized by the fact that their central charge $c$ must be a positive multiple of $8$, see \cite{DM04b, Zhu96, Sch93}. In the case $c=8$, the only model is the lattice VOA deriving from the lattice $E_8$ of rank $8$; whereas in the case $c=16$, there are the two lattice VOAs coming from the two unimodular even positive-definite lattices $D_{16}$ and $E_8\times E_8$ of rank $16$, see \cite{DM04b}. 
The case $c=24$ is far more difficult and still partially open. It started with the work of Schellekens \cite{Sch93}, stating that if any ``meromorphic $c=24$ CFT'' admits spin-one currents, then they generate a Kac-Moody algebra in a list of 70. 
In other words, it was given the task of proving the existence of exactly 71 holomorphic $c=24$ chiral CFTs, associated to the corresponding Kac-Moody algebra, which may be eventually also trivial.
In the last three decades, using the natural formalism of VOAs \cite{DM04b, EMS20, ELMS21}, this problem has been mostly solved by a case-by-case analysis, see \cite{LS19} (and \cite{LS20}) for a review, and through uniform approaches slightly later, see \cite{Hoe, Lam20} and \cite{HM22,MS23,MS24}, see also \cite{CLM22, LM23, BLS23, BLS24}. Indeed, we know that there exists exactly 70 strongly rational holomorphic VOAs with central charge $c=24$ and non-trivial weight-$1$ subspace. The remaining case is covered by the famous \textit{moonshine VOA} \cite{FLM88, Miy04}, whose uniqueness is still a big open problem. 

In the graded-local case, the central charge of a holomorphic VOSA can take any value in $\half\Zplus$. For example, the graded tensor product $F^l$ of $l\in\Zplus$ copies of the real free fermion VOSA $F$ is holomorphic with $c=\frac{l}{2}$. 
In a recent work \cite{HM}, cf.\ \cite{Ray24, BLTZ24}, the nice holomorphic VOSAs with central charge $c\leq 24$ and with non-trivial odd part where completely classified, \textit{up to the shorter moonshine $VB^\natural$ uniqueness conjecture} \cite{Hoe95}, stating that $VB^\natural$ is the unique nice holomorphic VOSA with central charge $c=\frac{47}{2}$ and vanishing weight-$\half$ and weight-$1$ subspaces (this is anyway implied by the moonshine uniqueness conjecture in \cite{FLM88}, see the discussion after \cite[Theorem 8.3]{HM} for details).
Their classification particularly relies on three ingredients, which we need to prove our results.
The first one is the \textit{Free-Fermion Splitting}, stating that for all $c\in\half\Zpluseq$ and all $l\in\Zpluseq$, the map $V\mapsto V\hat{\otimes}F^l$ defines a bijection between isomorphism classes of nice holomorphic VOSAs with central charge $c$ and with weight-$\half$ subspace of dimension $k\in\Zpluseq$ and isomorphism classes of nice holomorphic VOSAs with central charge $c+\frac{l}{2}$ and with weight-$\half$ subspace of dimension $k+l$.
This allows to reduce the classification problem to the one of nice holomorphic VOSAs with central charge $c=24$ only, see \cite[Section 8.5]{HM}. 
The second ingredient is the \textit{Neighbourhood Graph Method}, which states that the even part $V_\parzero$ of any nice holomorphic VOSA $V$ with $c=24$ and odd part $V_\parone\not=\{0\}$ is the fixed-point subalgebra of a strongly rational holomorphic VOA $U$ with the same central charge by an automorphism $g$ of order $2$ and of type $0$, which in turn gives rise to a (possibly isomorphic) strongly rational holomorphic VOA $U^{\mathrm{orb}(g)}$ with the same central charge by an \textit{orbifold construction} \cite{EMS20}.
The last one is a description of $V$, whenever the weight-$1$ subspace $V_1$ is non-trivial, as \textit{simple current extension} of a \textit{dual pair} $(W, V_K)$ of $V_\parzero$, where $K$ is the \textit{associated lattice} of $V_\parzero$ and $W$ is the \textit{coset subalgebra} of the lattice subalgebra $V_K$ in $V_\parzero$.
This last point allows to organize nice holomorphic VOSAs with central charge $c=24$ and with non-trivial weight-$1$ subspace into three \textit{glueing types}, depending on how the simple currents of $W\otimes V_K$ glue together to form $U, U^{\mathrm{org}(g)}$ and $V$. Moreover, there are two \textit{non-typical} VOSAs of glueing type III: the shorter moonshine VOSA tensorized with a real free fermion $VB^\natural \hat{\otimes}F$ and the \textit{odd moonshine VOSA} $VO^\natural$ \cite{DGH88, Hua96}, cf.\ \cite{GJ22, MS}. 
Of course, they fall into the case where $V_1=\{0\}$ and the shorter moonshine uniqueness conjecture prevents us to have a full classification result. 
To sum up, excluding hypothetical \textit{fake} copies of $VB^\natural \hat{\otimes}F$ (hypothetical \textit{fake} copies of $VO^\natural$ can be ruled out, see \cite[footnote 13]{HM} for details), there are exactly \textit{969} nice holomorphic VOSAs with central charge $c=24$ and with non-trivial odd part, see \cite[Theorem 8.3]{HM}.

There are two main results in the present paper: apart from hypothetical \textit{fake} copies of $VB^\natural$, we prove \textit{the unitarity of all nice holomorphic VOSAs with central charge at most $24$ and with non-trivial odd part} in Section \ref{subsec:unitarity} and \textit{their strong graded locality, except for those with central charge $24$ of glueing type III and with trivial weight-$\half$ subspace} in Section \ref{subsec:gl_conformal_nets}.

The unitarity result is obtained first for the case $c=24$ in Theorem \ref{theo:unitarity_holo_VOSAs_cc_24} and then for the case $c<24$ in Corollary \ref{cor:unitarity_holo_c<24} thanks to the Free-Fermion Splitting recalled above. We birefly review the idea of the proof of the former. Let $V$ be a nice holomorphic VOSA with central charge $c=24$ and with $V_\parone\not=\{0\}\not=V_1$. The key point is to prove the complete unitarity of the tensor product $W\otimes V_K$ of the dual pair $(W,V_K)$ of $V_\parzero$, so that one can conclude the unitarity of $V$ by the results in \cite{CGGH23, CGH} about the unitarity of VOSA extensions. 
We reach this goal by passing through the complete unitarity of $W$. It is proved by the remarkable result \cite[Theorem 8.6]{Gui24}, which implies the complete unitarity of strongly unitary and strongly rational VOAs with \textit{pointed} modular tensor categories of modules, as it is the case for $W$. Therefore, we also prove the strong unitarity of $W$ thanks to: the unitarity of irreducible twisted modules of lattice VOAs obtained from standard lifts of lattice isometries \cite[Section 3.2]{Lam23}; a similar application of \cite[Theorem 8.6, see Proposition 10.9]{Gui24} to the complete unitarity of fixed-point subalgebras of holomorphic VOAs; other than a crucial result \cite[Theorem 8.2]{HM} used for the classification. The unitarity of $VO^\natural$ is proved with similar tools, whereas the one of $VB^\natural\hat{\otimes} F$ was already known from \cite[Theorem 7.24]{CGH}. 

As for the unitarity, the result about the strong graded locality is proved first for the case $c=24$ in Theorem \ref{theo:strong_graded_locality_holo} and then for the case $c<24$ in Corollary \ref{cor:strong_graded_locality_c_less_24} thanks to the Free-Fermion Splitting.
It is based on Proposition \ref{prop:holo_vosas_strong_braiding}, where we prove the \textit{complete energy boundedness}, the \textit{strong intertwining property} and the \textit{strong braiding} for the \textit{unitary} intertwining operators of the tensor product $E\otimes X$ of a suitable dual pair $(E,X)$ in $V_\parzero$. The first of them implies the energy boundedness of $V$, which is a key property included in the definition of strong graded locality. The last two properties can be understood as a ``piecewise'' strong graded commutativity for the vertex operators of $V$, which is a simple current extension of $E\otimes X$, see Lemma \ref{lem:strong_graded_locality_simple_current_ext}.
These three fundamental properties are proved thanks to the fact that one of the factor, say $X$, in the tensor product $E\otimes X$ is already known to satisfy those desired properties; whereas the other one, that is $E$, forms a dual pair with a VOA $X'$ (possibly equal to $X$), still known to satisfy those properties, in at least one of the strongly rational holomorphic VOAs with $c=24$, obtained from the Neighbourhood Graph Method. 
The latter VOAs are proved to be strongly local in \cite[Theorem 5.5]{CGGH23} and thus we are allowed to use the big machinery developed in \cite{Gui21, Gui}, specifically in its part for coset models, see Theorem \ref{theo:strong_braiding_complete_energy_boundedness}. 
In total, without counting hypothetical \textit{fake} copies of $VB^\natural\hat{\otimes}F$, we are able to prove that 910 of the 969 nice holomorphic VOSAs with central charge $c=24$ and with non-trivial odd part are strongly graded-local, see \cite[Table 9]{HM}. 

Various preliminaries are presented in Section \ref{sec:preliminaries}. 
In Section \ref{subsec:preliminaries_unitary_VOSAs}, we recall the basic properties of strongly rational VO(S)As and of their representation theory. Then we discuss the condition of complete unitarity for VOAs and its implications for the unitary aspects of their representation theory. Finally, we treat the unitarity of CFT type VOSA extensions.
In Section \ref{subsec:preliminaries_VOSAs_gl_conformal_nets}, which serves Section \ref{subsec:gl_conformal_nets} only, we recall the correspondence between VOSAs and graded-local conformal nets from \cite{CKLW18, CGH} and also some facts about the representation theory of the latter.
Furthermore, we recall the main results from \cite{Gui21, Gui} and we use that setting to derive Lemma \ref{lem:strong_graded_locality_simple_current_ext}.

In conclusion, we end Section \ref{subsec:gl_conformal_nets} conjecturing that all nice holomorphic VOSAs with central charge $c\leq 24$ are strongly graded-local. A further interesting problem would be to determine which VOSAs show some kind of \textit{unitary superconformal} structures, which in turn will produce \textit{superconformal nets}, see \cite[Section 7]{CGH}.

\section{Preliminaries}  \label{sec:preliminaries}

\subsection{Completely unitary VOAs and VOSA extensions}
\label{subsec:preliminaries_unitary_VOSAs}

In the present section we set the notation and we recall the basic definitions and results which will be used throughout the paper.
Useful, but not exhaustive, references are: \cite{Kac01}, see also \cite{Xu98}, for the basic theory of \textit{vertex operator superalgebras (VOSAs)}; \cite{FLM88, FHL93, LL04} for the basic theory of \textit{vertex operator algebras (VOAs)} and their \textit{representation theory}; \cite{DL14, AL17}, \cite[Section 2.1]{Ten19b} for the \textit{unitary} theory for VOSAs and their modules. We will use the approach and the notation as in \cite[Section 4 and Section 5]{CKLW18} and in \cite[Section 3]{CGH}.

Let $\parzero$ and $\parone$ be the two equivalence classes in the cyclic group of order two $\Z_2:=\Z/2\Z$. For a VOSA $V$, we denote by $V_\parzero$ and $V_\parone$ its \textit{even} and \textit{odd} parts respectively, that is the eigenspaces of eigenvalues $1$ and $-1$ of a VOSA automorphism $\Gamma_V$ such that $\Gamma_V^{-1}=\Gamma_V$, called the \textit{parity operator}. We also define the vector superspace automorphism $Z_V:=(1+i)^{-1}(1_V+i\Gamma_V)$ of the vector superspace $V$. 
A vector $a\in V_\parzero$ is said to be \textit{even} with \textit{parity} $p(a)=\parzero$, whereas $b\in V_\parone$ is said to be \textit{odd} with \textit{parity} $p(b)=\parone$.
The \textit{vacuum} and the \textit{conformal} vectors of $V$ are even vectors denoted by $\Omega$ and $\nu$ respectively. For any vector $a\in V$, we use $Y(a,z)=\sum_{n\in \Z}a_{(n)}z^{-n-1}$ for its corresponding \textit{vertex operator}; in particular we write $Y(\nu,z)=\sum_{n\in \Z}L_nz^{-n-2}$. 
Recall that the set of operators $L_n$ for all $n\in\Z$ gives rise to a representation of the \textit{Virasoro algebra} with a certain \textit{central charge} $c\in \C$ on $V$, so that $V$ is said to be a VOSA with central charge $c$.
As usual, we say that $a\in V$ is a \textit{homogeneous vector} of \textit{conformal weight} $d_a\in \half\Z$ if $L_0a=d_a a$. 
Accordingly, $V_n$ is the eigenspace of $L_0$ of eigenvalue $n$ and we require the \textit{correct spin and statistics connection} for $V$, that is $V_\parzero=\bigoplus_{n\in\Z} V_n$ and $V_\parone=\bigoplus_{n\in\Z-\half}V_n$. 
Moreover, homogeneous vectors are said to be \textit{primary} if they are in the kernel of the operator $L_n$ for all $n\in\Zplus$ and \textit{quasi-primary} if they are in the kernel of $L_1$.
Recall that $\Omega$ is primary of conformal weight $0$, whereas $\nu$ is quasi-primary (but if $c=0$, then it is also primary) of conformal weight $2$.
We will use the notation $Y(a,z)=\sum_{n\in \Z-d_a}a_nz^{-n-d_a}$, where $a$ is any homogeneous vector and $a_n:=a_{(n+d_a-1)}$.

Now we turn to the representation theory of VOSA, first recalling some preliminary definitions. Let $V$ be any VOSA.
$V$ is said to be \textit{simple} if it has no non-trivial \textit{ideals} and of \textit{CFT type} if $V_0=\C\Omega$ and $V_n=\{0\}$ for all $n<0$. 
We follow the notions of (\textit{irreducible}) \textit{weak}, \textit{admissible} and \textit{ordinary} \textit{twisted} and \textit{untwisted} $V$-\textit{modules} as appearing in e.g.\ \cite[Section 2]{DNR21}; \textit{conformal weights} and \textit{homogeneous vectors} for $V$-modules are equivalently defined. In this paper, $V$-modules are meant to be ordinary and untwisted, unless differently specified.
For any vector $a\in V$ and any $V$-module $M$, we use the symbol $Y^M(a,z)=\sum_{n\in\Z}a_{(n)}^Mz^{-n-1}$ for the corresponding vertex operator; in particular we write $Y^M(\nu,z)=\sum_{n\in\Z}L_n^Mz^{-n-2}$.
Then, $V$ is said to be \textit{self-contragredient} if it is isomorphic to the \textit{contragredient module} $V'$ of its \textit{adjoint module}, that is $V$ itself seen as $V$-module. 
Furthermore, $V$ is said to be \textit{$C_2$-cofiniteness} if the vector space $V/\langle v_{(-2)}u\mid u,v\in V\rangle$ is of finite dimension. $V$ is said \textit{rational}, resp.\ \textit{regular}, if every \textit{admissible}, resp.\ \textit{weak}, $V$-module is a direct sum of \textit{irreducible} ordinary $V$-modules. By \cite[Theorem 6.6]{DZ06}, if $V$ is rational, then it has a finite number of irreducible $V$-modules.
A VOSA of CFT type is rational and $C_2$-cofinite if and only if it is regular, see \cite{Li99, ABD04} and \cite{HA15}, \cite[Section 2]{DNR21}.
A rational, $C_2$-cofiniteness and self-contragredient VOSA $V$ of CFT type (and thus also regular) is said to be \textit{strongly rational}. Recall that $V$ is self-contragredient if and only if it has a \textit{non-degenerate invariant bilinear form}. Then, a self-contragredient VOSA of CFT type is automatically simple by \cite[Proposition 3.8 (iv)]{CGH}, so that strongly rational VOSA are simple.

For a VOA $V$, we use the notation $\mathcal{V}\binom{K}{M\,\,\, N}$ for the vector space of \textit{intertwining operators of type} $\binom{K}{M\,\,\, N}$. 
One can refer to e.g.\ \cite[Section 5.4]{FHL93}, with the only difference that we allow real powers in the formal series of intertwining operators as in \cite[Section 1.3]{Gui19I}.
We also recall that intertwining operators among irreducible $V$-modules are said to be \textit{irreducible} as well. 
By the series of works \cite{HL95I, HL95II, HL95III, Hua95, Hua08}, it is known that the category $\Rep(V)$ of $V$-modules for a strongly rational VOA $V$ has a structure of \textit{modular tensor category} (sometimes also called modular category or modular fusion category), see e.g.\ \cite{BK01, EGNO15, Tur10}.
For a sketch of the construction of the modular tensor structure of $\Rep(V)$, one can refer to \cite[Secton 2.4]{Gui19I} or \cite[Section 4.1]{Gui21}.
Objects of $\Rep(V)$ are $V$-modules and arrows are homomorphisms of $V$-modules.
With an abuse of notation, we will use $M\in \Rep(V)$ to denote the picking of a $V$-module $M$ from objects of $\Rep(V)$. 
Of course, the additive structure of $\Rep(V)$ is given by the direct sum of $V$-modules, so that the simple objects are exactly the irreducible $V$-modules; whereas the \textit{tensor bifunctor} of $\Rep(V)$ is usually denoted by $\boxtimes$ and we use the symbol $b_{M,N}$ for the \textit{braiding} of $M,N\in\Rep(V)$.
For further use, we recall the definition of the tensor product $M\boxtimes N$ and of the braiding $b_{M,N}$ of two $V$-modules $M$ and $N$. As $V$ is rational, the set $\{M_k\mid k\in\mathcal{E}\}$ with $\mathcal{E}$ a set of indexes, constituted by a representing module $M_k$ for every equivalence class of irreducible $V$-modules, is finite. We let $V$ be in this set.
Then  
\begin{equation}   \label{eq:defin_cat_tensor_product}
	\begin{split}
		M\boxtimes N
			&:=
		\bigoplus_{k\in\mathcal{E}}\mathcal{V}\binom{M_k}{M \,\,\, N}^*\otimes M_k   \\
		Y^{M\boxtimes N}(a,z)
			&:=
		\bigoplus_{k\in\mathcal{E}}1_{\mathcal{V}\binom{M_k}{M \,\,\, N}^*}\otimes Y^{M_k}(a,z)
		\qquad \forall a\in V  
	\end{split}
\end{equation}
where $\mathcal{V}\binom{M_k}{M \,\,\, N}^*$ is the dual of the vector space $\mathcal{V}\binom{M_k}{M \,\,\, N}$, which is finite dimensional thanks to the rationality of $V$.
Then a tensor product of morphisms can be defined accordingly, see the references given above.
For any $K\in\Rep(V)$ and any intertwining operator $\Y$ of type $\binom{K}{M\,\,\, N}$, define an intertwining operator $B_+\Y$ of type $\binom{K}{N\,\,\, M}$ by
\begin{equation}  \label{eq:braided_intertwining_operator}
	(B_+\Y)(v,z)w:=e^{zL_{-1}^K}\Y(w,-z)v 
	\qquad\forall v\in N \,\,\, \forall w\in M \,.
\end{equation}
$B_+\Y$ is called a \textit{braided intertwining operator}.
Therefore for all $k\in\mathcal{E}$, $B_+$ induces a linear operator from $\mathcal{V}\binom{M_k}{M\,\,\, N}$ to $\mathcal{V}\binom{M_k}{N\,\,\, M}$, so that if $\sigma_{M,N}^k$ is its transpose from $\mathcal{V}\binom{M_k}{M\,\,\, N}^*$ to $\mathcal{V}\binom{M_k}{N\,\,\, M}^*$, we can define the morphism:
\begin{equation}   \label{eq:defin_braiding}
	b_{M,N}:=\sum_{k\in\mathcal{E}}\sigma_{M,N}^k\otimes \one_{M_k}:M\boxtimes N\to N\boxtimes M
\end{equation}
which gives the desired \textit{braid operator} of $\Rep(V)$.
The \textit{identity object} of $\Rep(V)$ is the adjoint module, still denoted by $V$, and the \textit{categorical dual} of $M\in\Rep(V)$ is given by the contragredient module $M'$, so that $M\boxtimes M'\cong V\cong M'\boxtimes M$. 
We also recall that the \textit{twist} $\omega(M)$ for any $M\in\Rep(V)$ is defined as the operator $e^{i2\pi L_0^M}$.

\begin{defin}  \label{defin:simple_currents}
	Let $V$ be a strongly rational VOA. Any $V$-module $M\in\Rep(V)$ is said a \textit{simple current} if it is invertible, that is there exists a $V$-module $X\in\Rep(V)$ such that $M\boxtimes X\cong V$. If $M\not\cong V$ and there exists $n\in \Zplus$ such that $M^{\boxtimes n}\cong V$ and $M^l\not\cong V$ for all integers $1<l< n$, then $M$ is called a \textit{$\Z_n$-simple current}.
	$\Rep(V)$ is \textit{pointed} if all simple objects are invertible, that is simple currents.
\end{defin}

Note that the $V$-module $X$ in Definition \ref{defin:simple_currents} must be equivalent to $M'$, so that $X\boxtimes M\cong V$. 
For all simple currents $M,N\in\Rep(V)$, we have that $M$, $N$ and $M\boxtimes N$ are all irreducible.
Moreover, the categorical dimension $d(M)$ (see e.g.\ \cite[Definition 4.7.11]{EGNO15}) of any invertible object $M$ is equal to $\pm 1$, see e.g.\ \cite[Remark 2.3]{FRS04}. Then it follows from \cite[Remark 2.33]{DGNO10} that there exists a complex number $\alpha(M)$ such that $\omega(M)=\alpha(M)\one_M$ and $b_{M,M}=\alpha(M)d(M)\one_{M\boxtimes M}$. In particular, if $\Rep(V)$ is \textit{even}, that is if the categorical dimensions of all simple objects are positive, then $d(M)=1$, $\omega(M)=\alpha(M)\one_M$ and $b_{M,M}=\alpha(M)\one_{M\boxtimes M}$. Then under the following condition on $V$, $\Rep(V)$ is even as the quantum dimension of every $V$-module \cite{DJX13}, which is a positive number, equals the categorical one by \cite[Proposition 3.11]{DLN15}. 

\begin{defin}
	 A VOSA $V$ is said to be \textit{nice} if it is strongly rational and it satisfies the \textit{positivity condition}, that is every irreducible untwisted or $\Gamma_V$-twisted $V$-module $M$ different from the adjoint module $V$ has positive $L_0^M$-grading.
\end{defin}

Note that the positivity condition for a VOA $V$, that is a VOSA $V$ with trivial grading $\Gamma_V=1_V$, reduces to a positivity condition for the irreducible (untwisted) $V$-modules only. 

\begin{rem}  \label{rem:niceness_even_odd_parts}
	A simple VOSA $V$ of CFT type is nice if and only if $V_\parzero$ is.
	Indeed, if $V$ is strongly rational, then $V_\parzero$ is too by \cite[Proposition 2.4]{HM} (there it is used that $V_\parzero$ is regular if and only if $V$ is as claimed at the beginning of \cite[Section 4]{DNR21}, see also the proof of \cite[Proposition 3.4]{DRY22}, based on \cite{Miy15, CM}); if $V$ is nice, then $V_\parzero$ is by \cite[Proposition 2.7]{HM}, based on \cite[Proposition 5.8]{DRY22}. 
	Vice versa, if $V$ is a simple VOSA, then $V_\parzero$ is a simple VOA and $V_\parone$ is an irreducible $V_\parzero$-module, see e.g.\ \cite[Remark 4.17]{CGGH23}. If $V$ is also of CFT type, then $V_\parzero$ is too and $V_\parone$ has necessarily positive $L_0$-grading. Even more, if $V_\parzero$ is strongly rational, then $V_\parone$ is known to be a $\Z_2$-simple current of $V_\parzero$, see e.g.\ \cite[Theorem 3.1 and Remark A.2]{CKLR19}. By \cite[Proposition 2.6]{HM}, we can conclude that $V$ is strongly rational and the niceness follows from the fact that every untwisted and $\Gamma_V$-twisted $V$-module is an untwisted $V_\parzero$-module.
\end{rem}

\begin{rem}
	Note that by \cite[Theorem 3.9]{CKL20}, see also \cite[Theorem 4.2]{CKM24}, if $V$ is a strongly rational VOA such that $\Rep(V)$ is even (for example when $V$ is nice), then any extension of $V$ by a $\Z_2$-simple current $M$ with twist $\omega(M)=-\one_M$ (or equivalently with braiding $b_{M,M}=-\one_{M\boxtimes M}$), cf.\ Definition \ref{defin:vosa_extensions} below, will produce a VOSA (which must have the correct spin and statistics connection by our definition).
\end{rem}

\begin{rem}  \label{rem:discriminant_form}
	The ribbon equivalence classes of pointed modular tensor categories are exhausted by the categories of type $\mathcal{C}(A,q)$ associated to the metric group $(A,q)$, where $A$ is a finite abelian group and $q:A\to \C/\Z$ is a non-degenerate quadratic form, see \cite[Theorem 3.3]{JS93}, see also \cite[Section 8.4 and Example 8.13.5]{EGNO15}. 
	Simple objects of $\mathcal{C}(A,q)$ are given by $X^\alpha$ for all $\alpha\in A$, satisfying $X^\alpha\boxtimes X^\beta=X^{\alpha+\beta}$ for all $\alpha,\beta\in A$.
	Then the braid operator $b_{X^\alpha,X^\beta}$ is given by $q(\alpha+\beta)-q(\alpha)-q(\beta)$ and the twist $\omega(X^\alpha)$ is $e^{i2\pi q(\alpha)}\one_{X^\alpha}$. 
	Also the modular matrices $S$ and $T$ are described in terms of $q$.
	If $V$ is a nice VOA with pointed even modular tensor category $\Rep(V)$, then we can realize the latter as $\mathcal{C}(A_V,q_V)$ for some metric group $(A_V, q_V)$. Moreover, as the conformal weights of $V$-modules are all rational numbers \cite[Theorem 1.1]{DLM00}, cf.\ also \cite[Theorem 8.9]{DZ05} and \cite[Theorem 2.1]{DNR21}, we have that $q_V$ actually takes values in $\Q/\Z$ and we call $(A_V,q_V)$ a \textit{discriminant form}, see e.g.\ \cite{Nik80}. With an abuse of notation, we may identify $A_V$ with $(A_V,q_V)$.
\end{rem}

\begin{defin}
	 A VOSA $V$ is said to be \textit{holomorphic} (or self-dual) if the adjoint module is the unique irreducible $V$-module.
\end{defin}

Now, we move to the unitarity for VOSAs and their modules: 

\begin{defin}
	A VOSA $V$ is said to be \textit{unitary} if the vector superspace $V$ is equipped with a \textit{scalar product}, linear in the second variable, and a VOSA automorphism $\theta$, called the \textit{PCT-operator}, such that $\scalar$ is \textit{normalized} and $\theta$ is an \textit{involution}, that is $(\Omega|\Omega)=1$ and $\theta^{-1}=\theta$ respectively. Furthermore, they must satisfy the following \textit{invariant property}:
	\begin{equation} \label{eq:invariant_property}
		(Y(\theta(a),z)b|c)=(b|Y(e^{zL_1}(-1)^{2L_0^2+L_0}z^{-2L_0}a, z^{-1})c)
		\qquad\forall a,b,c\in V \,.
	\end{equation}
	Similarly, an untwisted or twisted $V$-module $M$ is said to be \textit{unitary} if it has a scalar product such that the vertex operators $Y^M(a,z)$ for all $a\in V$ satisfy the invariant property \eqref{eq:invariant_property} with the same PCT operator of $V$ and for all $b,c\in M$. Accordingly, any untwisted or twisted $V$-module is said \textit{unitarizable} if it can be equipped with a unitary structure. 
\end{defin}

\begin{rem} 
	Recall that a unitary VOSA is automatically self-contragredient and it is simple if and only if it is of CFT type, see \cite[Proposition 3.10]{CGH} and its proof. 
\end{rem}

\begin{rem} \label{rem:unitarity_even_odd_parts}
		By \cite[Theorem 3.11, cf.\ Remark 3.12]{CGH}, if $V$ is a simple VOSA, then $V$ is unitary if and only if $V_\parzero$ is a unitary VOA and $V_\parone$ is a unitary $V_\parzero$-module. In both cases, $V_\parzero$ and $V_\parone$ are automatically simple and irreducible respectively, see Remark \ref{rem:niceness_even_odd_parts}.
\end{rem}

\begin{rem}  \label{rem:equiv_defin_unitarity}
	One may use the following alternative definition of unitarity, cf.\ \cite[Section 2]{CT23}: a VOSA $V$ is said unitary if there exist a normalized scalar product $\scalar$ and an antilinear vector superspace involution $V\ni a\to \overline{a}\in V$ such that $\overline{\nu}=\nu$ and
	\begin{equation}
		(a_nb| c)=(b|\overline{a}_{-n}c) \qquad\forall a,b,c\in V \,.
	\end{equation}
	Note that $\overline{\Omega}=\Omega$. The equivalence of these two definitions of unitarity follows from \cite[Theorem 3.31]{CGH}, giving us the following relation between the involution $\overline{\cdot}$ and the PCT operator $\theta$: $\overline{a}=e^{L_1}(-1)^{2L_0^2+L_0}\theta(a)$ for all $a\in V$. Accordingly, a homogeneous vector $a\in V$ is said \textit{Hermitian} if $\overline{a}=a$.
\end{rem}

Following \cite[Section 2]{Gui}, we call \textit{unitary} the intertwining operators among unitary modules of a unitary VOA.
Let $\Repu(V)$ be the category of \textit{unitary} $V$-modules, that is objects of $\Repu(V)$ are unitarizable $V$-modules equipped with fixed unitary structures.
We let the adjoint module $V$ be in $\Repu(V)$ with the unitary structure of the unitary VOA $V$. In the following, we may identify $V$ with its contragredient module $V'$.
Therefore, $\Repu(V)$ is naturally a $C^*$-category, where the $^*$-structure is given by taking for a morphism $T$ between unitary $V$-modules $M$ and $N$, its adjoint $T^*$ from $N$ to $M$ with respect to the scalar products on these $V$-modules. 

Suppose that $V$ is strongly rational. 
In \cite{Gui19I, Gui19II}, see also \cite[Section 4.3]{Gui21} and \cite[Section 1.3]{Gui22}, for all $M, N\in\Repu(V)$, the author defines a non-degenerate invariant Hermitian form $\Lambda_{M,N}\scalar$ for the dual vector spaces of intertwining operators appearing in the definition \eqref{eq:defin_cat_tensor_product} of $M\boxtimes N$. One can show that if $\Lambda_{M,N}\scalar$ is \textit{positive-definite}, then it can be extended to an invariant scalar product on $M\boxtimes N$, making the structural isomorphisms of $\Rep(V)$ unitary. 
Then $\Lambda_{M,N}\scalar$ is used to define a categorical tensor product on $\Repu(V)$, turning it into a unitary ribbon fusion category.
In the following, we will denote $\Lambda_{M,N}\scalar$ and its extension to any product $M\boxtimes N$ by the same symbol.
Then it comes natural to have the following definitions:

\begin{defin}[\cite{Ten18}]
	A simple rational VOA $V$ is said to be \textit{strongly unitary} if it is unitary and all $V$-modules are unitarizable. 
\end{defin}

\begin{defin}[{\cite[Definition 8.1 and Definition 10.1]{Gui24}, cf.\ \cite[Definition 1.8]{Gui22}}]
	Let $V$ be a unitary strongly rational VOA. If $M, N\in \Repu(V)$ are such that $\Lambda_{M,N}\scalar$ is positive-definite, then $M\boxtimes N$ is said to be (algebrically) \textit{positive}. 
	Therefore $V$ is said to be \textit{completely unitary} if it is strongly unitary and $M\boxtimes N$ is positive for all irreducible unitary $V$-modules $M$ and $N$.
\end{defin}

In the completely unitary case, $\Repu(V)$ is a \textit{unitary modular tensor category}, so that for all unitary $V$-modules $M,N$ and $K$, we will safely identify $M\boxtimes(N\boxtimes K)$ with $(M\boxtimes N)\boxtimes K$, just writing $M\boxtimes N\boxtimes K$, and both $V\boxtimes M$ and $M\boxtimes V$ with $M$.  

\begin{rem}  \label{rem:niceness_completely_unitary_voas}
	Irreducible unitary modules of simple unitary VOAs have positive $L_0$-grading, apart from the adjoint module, see \cite[Proposition 4.5]{CGGH23}, cf.\ also \cite[Proposition 1.7]{Gui19I}. Therefore, strongly rational and strongly unitary VOAs (and thus also completely unitary VOAs) are nice. 
\end{rem}

The following remarkable results are crucial for our goals.

\begin{theo}[{\cite[Theorem 8.6]{Gui24}}] \label{theo:positivity_fusion_product}
	Let $V$ be a strongly rational VOA with $M$ and $N$ be irreducible unitary $V$-modules. If $M\boxtimes N$ is an irreducible unitarizable $V$-module, then $M\boxtimes N$ is positive.
\end{theo}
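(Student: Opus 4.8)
The plan is to exploit the connection between positivity of the Hermitian form $\Lambda_{M,N}\scalar$ and the existence of a genuine unitary structure on $M\boxtimes N$, together with the rigidity of the $C^*$-categorical framework of $\Repu(V)$. Recall that $M\boxtimes N$ is by construction the direct sum $\bigoplus_{k\in\mathcal E}\mathcal V\binom{M_k}{M\,\,\,N}^*\otimes M_k$, and that $\Lambda_{M,N}\scalar$ is an invariant Hermitian form on each summand; saying $M\boxtimes N$ is positive means precisely that this form is positive-definite. So the real content is: if the $V$-module underlying $M\boxtimes N$ admits \emph{some} unitary structure, then the \emph{canonical} Hermitian form $\Lambda_{M,N}$ coming from Gui's construction is already positive-definite, hence \emph{is} (up to normalization) that unitary structure.

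First I would use the hypothesis that $M\boxtimes N$ is irreducible: by Schur's lemma in the $C^*$-category $\Repu(V)$, an irreducible module carries a unitary invariant scalar product that is unique up to a positive scalar. Since $M\boxtimes N$ is assumed unitarizable, fix such a unitary structure $\scalar_u$ on it. Next I would invoke the invariance and non-degeneracy of $\Lambda_{M,N}\scalar$: both $\Lambda_{M,N}\scalar$ and $\scalar_u$ are non-degenerate invariant Hermitian forms on the irreducible module $M\boxtimes N$, so by the same Schur-type argument for Hermitian (not necessarily positive) invariant forms, they must be proportional by a \emph{real} nonzero scalar $\lambda$, i.e. $\Lambda_{M,N} = \lambda\,\scalar_u$. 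The problem thus reduces to determining the \emph{sign} of $\lambda$.

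To pin down the sign I would test $\Lambda_{M,N}$ on a single well-chosen vector and show the value is positive. The natural place to look is the component of $M\boxtimes N$ where the structure is most transparent — e.g. evaluating the form on vectors built from the vacuum-like or lowest-weight data of $M$ and $N$, using the explicit formula for $\Lambda_{M,N}$ in terms of compositions of intertwining operators and the scalar products on $M$, $N$, and the target modules $M_k$; here Gui's normalization (loc.\ cit., \cite{Gui19I,Gui19II}) is set up so that these "diagonal" pairings are manifestly sums of squared norms. Concretely, one pairs an intertwining operator $\Y$ with itself through the invariant forms and uses the invariant property \eqref{eq:invariant_property} to rewrite the pairing as $(\Y(a,z)b\,|\,\Y(a,z)b)_{M_k}$-type expressions, which are non-negative; non-degeneracy then forces strict positivity on the relevant summand, hence $\lambda>0$, hence $\Lambda_{M,N}\scalar$ is positive-definite, i.e. $M\boxtimes N$ is positive.

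The main obstacle is this last sign computation: making rigorous that the canonical Hermitian form $\Lambda_{M,N}$ restricted to a suitable finite-dimensional subspace is genuinely a sum of squares rather than merely a non-degenerate indefinite form. This is exactly the point where one must use the \emph{unitarity} of the intertwining operators of $V$ (in the sense of \cite{Gui,Gui19I}) and the compatibility of Gui's Hermitian pairing with the $C^*$-structure, rather than just formal invariance; the irreducibility hypothesis is what lets a single positive value propagate to the whole module. I would expect the argument to be essentially the one in \cite[Theorem 8.6]{Gui24}, organized around (i) Schur for invariant Hermitian forms on an irreducible unitarizable module, reducing to a scalar $\lambda\in\R^\times$, and (ii) a local positivity check fixing $\lambda>0$.
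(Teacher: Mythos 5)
First, a point of comparison: the paper does not prove this statement at all --- it is quoted verbatim from \cite[Theorem 8.6]{Gui24} and used as a black box, so there is no internal proof to measure your attempt against. Judged on its own terms, your reduction is correct but the decisive step is missing. Since $M\boxtimes N$ is irreducible, exactly one $k\in\mathcal{E}$ contributes to \eqref{eq:defin_cat_tensor_product} and $\dim\mathcal{V}\binom{M_k}{M\,\,\,N}=1$, so $\Lambda_{M,N}\scalar$ is literally a single nonzero real number $\lambda$ and positivity means $\lambda>0$; you do not even need the Schur argument for Hermitian forms. Everything therefore hinges on your ``local positivity check,'' and that is exactly where the proposal fails.

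The claim that Gui's diagonal pairings $\Lambda(\Y|\Y)$ are ``manifestly sums of squared norms'' is not correct, and taking it for granted begs the question: if this were manifest, $\Lambda_{M,N}$ would be positive definite unconditionally, every strongly unitary strongly rational VOA would be completely unitary, and neither the hypothesis that $M\boxtimes N$ is unitarizable nor the theorem itself would have any content. In fact $\Lambda(\Y|\Y)$ is defined as the coefficient of the \emph{vacuum channel} in the expansion of the composition $\Y^{\dagger}\Y$ (with $\Y^{\dagger}$ the adjoint intertwining operator of type $\binom{N}{M'\,\,\,M_k}$) into iterates through the irreducible summands of $M'\boxtimes M$. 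The manifestly non-negative quantity $(\Y(w,z)v\,|\,\Y(w,z)v)_{M_k}$ --- which is where the unitarizability of $M\boxtimes N\cong M_k$ actually enters, both to define $\Y^{\dagger}$ and to make this a genuine norm --- is the \emph{sum} over all those channels, so positivity of the sum does not formally yield positivity of the one coefficient you need. Extracting the sign of the vacuum coefficient is the real work: one must either run an asymptotic argument in which the vacuum channel dominates as the insertion points collide (using that every non-vacuum channel has strictly positive lowest conformal weight, together with nontrivial convergence and monodromy control), or, as Gui actually does, realize $\Lambda$ geometrically via sewing/reflection positivity of genus-zero correlation functions. Your sketch identifies the right place to look but supplies no mechanism for this step, so as written the proof has a genuine gap precisely at the only nontrivial point.
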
 

\begin{cor}  \label{cor:complete_unitarity_pointed_mtc}
	Strongly unitary and strongly rational VOAs with pointed modular tensor categories of modules are completely unitary.
\end{cor}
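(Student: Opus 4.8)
The plan is to obtain the statement as an essentially immediate consequence of Theorem \ref{theo:positivity_fusion_product}. Let $V$ be a strongly unitary and strongly rational VOA whose modular tensor category $\Rep(V)$ of modules is pointed. Since $V$ is strongly unitary by hypothesis, by the very definition of complete unitarity it remains only to show that $M\boxtimes N$ is positive for every pair of irreducible unitary $V$-modules $M,N\in\Repu(V)$.

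The key observation I would use is that pointedness of $\Rep(V)$ makes every simple object invertible, hence a simple current. Therefore $M$ and $N$ are simple currents, and, as recalled right after Definition \ref{defin:simple_currents}, so is $M\boxtimes N$; in particular $M\boxtimes N$ is an \emph{irreducible} $V$-module (equivalently, exactly one summand $M_k$ occurs in the decomposition \eqref{eq:defin_cat_tensor_product}, namely $M\boxtimes N$ itself, with one-dimensional multiplicity space). Moreover, since $V$ is strongly unitary, \emph{every} $V$-module is unitarizable, so in particular $M\boxtimes N$ is unitarizable. Thus $M$ and $N$ are irreducible unitary $V$-modules and $M\boxtimes N$ is an irreducible unitarizable $V$-module, so Theorem \ref{theo:positivity_fusion_product} applies verbatim and yields that $M\boxtimes N$ is positive. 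Letting $M,N$ range over all irreducible unitary $V$-modules, we conclude that $V$ is completely unitary.

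I do not expect a genuine obstacle here: the whole point is that pointedness supplies, for free, the hypothesis ``$M\boxtimes N$ irreducible'' of Theorem \ref{theo:positivity_fusion_product} — which is exactly the condition that tends to fail in general — after which Gui's theorem does all the work and strong unitarity provides the required unitarizability of $M\boxtimes N$. The only minor point worth spelling out is the bookkeeping that, for simple currents $M,N$, identifies the invariant Hermitian form $\Lambda_{M,N}\scalar$ on the single surviving summand $\mathcal{V}\binom{M\boxtimes N}{M\ \, N}^*\otimes(M\boxtimes N)$ of $M\boxtimes N$ with precisely the form whose positive-definiteness Theorem \ref{theo:positivity_fusion_product} asserts.
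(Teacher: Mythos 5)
Your proposal is correct and is exactly the argument the paper intends (the corollary is stated without proof as an immediate consequence of Theorem \ref{theo:positivity_fusion_product}): pointedness makes every irreducible module a simple current, so $M\boxtimes N$ is irreducible, strong unitarity makes it unitarizable, and Gui's theorem gives positivity. No gap.
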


A \textit{vertex subalgebra} $W$ of a VOSA $V$ is a subspace of $V$ which is also closed with respect to the $(n)$-product and containing the vacuum vector: in particular $W$ inherits from $V$ the vertex superalgebra structure, see e.g.\ \cite[Section 4.3]{Kac01}. 
Moreover, for a subset $S$ of $V$, we denote by $W(S)$ the smallest vertex subalgebra of $V$ containing $S$.
We say that $W$ is \textit{full} if it contains the conformal vector of $V$.
Recall that a vertex subalgebra is $L_{-1}$-invariant. If $V$ is a unitary VOSA, we call $W$ a \textit{unitary (vertex) subalgebra} if it is also $L_1$-invariant and PCT-invariant, see \cite[Proposition 3.33]{CGH}. In this case, see \cite[Proposition 3.35]{CGH}, if $V$ is also simple, then we have that $W$ is a simple unitary VOSA: the conformal vector is given by $\nu^W:=e_W\nu$ with $e_W$ the projection of $V$ onto $W$, and the operators $L_n$ and $L^W_n$ coincide on $W$ for all $n\in\{-1,0,1\}$; the invariant scalar product and the PCT operator of $W$ are inherited by restriction from the ones of $V$. 

\begin{rem}  \label{rem:fixed-point_subalgebras}
	Let $V$ be a unitary VOSA. We call $\Aut(V)$ and $\Aut_{\scalar}(V)$ the groups of \textit{(VOSA) automorphisms} and \textit{unitary (VOSA) automorphisms} of $V$ respectively, see \cite[Section 3.3]{CGH}.
	If $G\subseteq\Aut(V)$ is a closed subgroup, then the \textit{fixed-point subalgebra} 
	$$
	V^G:=\{a\in V\mid g(a)=a \,\,\,\forall g\in G\}
	$$ 
	is a vertex subalgebra of $V$. Moreover, if $G\subseteq\Aut_{\scalar}(V)$, then $V^G$ is also a unitary subalgebra of $V$, see \cite[Example 3.34]{CGH}.
	Note that the even part $V_\parzero$ is equal to $V^{\{1_V, \Gamma_V\}}$ and indeed it is a full unitary subalgebra of $V$.
\end{rem}

Fixed-point subalgebras by finite cyclic subgroups of automorphisms of strongly rational holomorphic VOAs have pointed modular tensor categories of modules by \cite[Theorem 5.2 and Proposition 5.6]{EMS20}. Therefore, Corollary \ref{cor:complete_unitarity_pointed_mtc} applies to this case: 

\begin{theo}[{\cite[Theorem 10.4 and Proposition 10.9]{Gui24}}]  \label{theo:complete_unitarity_holo_orbifold}
	Let $V$ be a unitary strongly rational holomorphic VOA and $G$ be a finite cyclic subgroup of $\Aut_{\scalar}(V)$. Suppose that all irreducible $G$-twisted $V$-module are unitarizable. Then $V^G$ is a completely unitary VOA.
\end{theo}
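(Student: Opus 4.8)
The plan is to deduce the statement from Corollary \ref{cor:complete_unitarity_pointed_mtc} by verifying its three hypotheses for the VOA $W:=V^G$: strong rationality, strong unitarity, and pointedness of $\Rep(W)$. The last of these is precisely the input \cite[Theorem 5.2 and Proposition 5.6]{EMS20} recalled just above the statement, so the actual work lies in the first two.

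First I would record that $W$ is strongly rational and unitary. Since $V$ is of CFT type and $\Omega$ is fixed by every automorphism, $W$ is of CFT type as well; as $V$ is moreover strongly rational and holomorphic and $G$ is finite cyclic, the orbifold theory for holomorphic VOAs under cyclic (indeed solvable) group actions --- $C_2$-cofiniteness and regularity, and rationality (cf.\ \cite{DRY22, Miy15, CM}) --- shows that $W$ is strongly rational. Because $G\subseteq\Aut_{\scalar}(V)$, Remark \ref{rem:fixed-point_subalgebras} gives that $W$ is a unitary subalgebra of the simple unitary VOA $V$, hence a simple unitary VOA in its own right, with conformal vector, scalar product and PCT operator $\theta_W$ inherited by restriction and with $L_{-1},L_0,L_1$ agreeing with $L_{-1}^W,L_0^W,L_1^W$ on $W$.

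The main step is strong unitarity of $W$, i.e.\ unitarizability of every irreducible $W$-module. Write $G=\langle g\rangle$ with $g$ of order $n$. I would use the representation theory of holomorphic orbifolds: for each $0\le i<n$ there is, up to isomorphism, a unique irreducible $g^i$-twisted $V$-module $M_i$; since $g$ commutes with $g^i$, the $g$-conjugate $M_i\circ g$ is again $g^i$-twisted and hence isomorphic to $M_i$, so $g$ is implemented on $M_i$ by some operator $\phi_i$; and every irreducible $W$-module occurs as an eigenspace $M_i^{(j)}$ of $\phi_i$, these exhausting the irreducible $W$-modules. By hypothesis each $M_i$ carries a unitary structure, which I would fix. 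Since $g$ is a unitary automorphism, $M_i\circ g$ is again unitary and isomorphic to $M_i$; using that a self-intertwiner of the irreducible $M_i$ is a positive scalar, the intertwiner $M_i\to M_i\circ g$ can be normalized to a \emph{unitary} operator $\phi_i$, and after a scalar correction (possibly replacing $\Z_n$ by a finite cyclic cover) one may take $\langle\phi_i\rangle$ to be a finite group of unitaries. Then the decomposition $M_i=\bigoplus_j M_i^{(j)}$ is orthogonal, each $M_i^{(j)}$ is a $W$-submodule (as $\phi_i$ commutes with $Y^{M_i}(a,z)$ for $a\in W=V^G$), and the scalar product of $M_i$ restricts to one on $M_i^{(j)}$ satisfying the invariance property \eqref{eq:invariant_property} for the $W$-vertex operators with PCT operator $\theta_W$ --- a restriction of the corresponding property on $M_i$. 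Hence every irreducible $W$-module is unitarizable, so $W$ is strongly unitary.

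With $W$ strongly rational, strongly unitary and $\Rep(W)$ pointed, Corollary \ref{cor:complete_unitarity_pointed_mtc} gives that $W=V^G$ is completely unitary. I expect the genuine obstacle to be the strong-unitarity step: arranging the canonical $G$-action on the twisted sectors to be simultaneously unitary and (after passing to a cover, if needed) non-projective, and checking that the forms induced on the eigenspaces coincide with the unitary structures dictated by the abstract module classification --- this is exactly where the careful work of \cite[Theorem 10.4 and Proposition 10.9]{Gui24} is located.
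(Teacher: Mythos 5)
Your proposal is correct and follows essentially the same route as the paper, which does not reprove this result but imports it from \cite[Theorem 10.4 and Proposition 10.9]{Gui24} after observing (via \cite[Theorem 5.2 and Proposition 5.6]{EMS20}) that $\Rep(V^G)$ is pointed, so that Corollary \ref{cor:complete_unitarity_pointed_mtc} applies once strong rationality and strong unitarity of $V^G$ are in hand. Your reduction to pointedness plus strong unitarity, and your identification of the unitarization of the eigenspace decomposition of the irreducible $g^i$-twisted modules as the genuinely hard step carried out in \cite{Gui24}, match the intended argument.
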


Now we move to the \textit{graded tensor product} $V^1\hat{\otimes} V^2$ of two VOSAs $V^1$ and $V^2$, see e.g.\ \cite[Section 4.3]{Kac01}, \cite[Theorem 3.2.4]{Xu98}. As vector superspace we have the usual tensor product of vector superspaces $V^1\otimes V^2$ with parity operator $\Gamma_{V^1\hat{\otimes} V^2}=\Gamma_{V^1}\otimes \Gamma_{V^2}$. 
With obvious notation, the vacuum and the conformal vectors are $\Omega^1\otimes \Omega^2$ and $\nu^1\otimes \Omega^2+\Omega^1\otimes\nu^2$ respectively. Moreover, the vertex operators are defined by
\begin{equation}  \label{eq:defin_graded_tensor_product_vertex_ops}
	Y_{V^1\hat{\otimes}V^2}(a^1\otimes a^2, z)
	:=
	Y_{V^1}(a^1, z)\Gamma_{V^1}^{p(a^2)}\otimes Y_{V^2}(a^2, z)
	\qquad\forall a^1\in V^1
	\,\,\,\forall a^2\in V^2 \,.
\end{equation}
If either $V^1$ or $V^2$ is a VOA, then $\Gamma_{V^1}^{p(a^2)}=1_{V^1}$ and accordingly we may use the symbol $\otimes$ in place of $\hat{\otimes}$ and call it the \textit{tensor product} of $V^1$ and $V^2$.

\begin{rem}  \label{rem:graded_tensor_product}
	We have that $V^1\hat{\otimes}V^2$ is simple if and only if $V^1$ and $V^2$ are, cf.\ the proof of \cite[Proposition 4.7.2]{FHL93}.
	It is easy to see that if $V^1$ and $V^2$ are CFT type, resp.\ self-contragredient, then $V^1\hat{\otimes}V^2$ is of CFT type, resp.\ self-contragredient. Recall that the tensor product of regular VOAs is regular by \cite[Proposition 3.3]{DLM97}, so that the tensor product of strongly rational VOAs is still strongly rational. 
	Furthermore, as for \cite[Proposition 4.7.2]{FHL93}, the irreducible modules of a tensor product of VOAs are tensor product of irreducible modules of those VOAs, if $V^1$ and $V^2$ are VOAs satisfying the positivity condition, then their tensor product does the same. 
	Note that if $V^1\hat{\otimes}V^2$ is a unitary VOSA, then $V_1$ and $V_2$ are unitary subalgebras of it. By \cite[Proposition 2.4]{AL17}, cf.\ also \cite[Proposition 2.20]{Ten19}, if $V^1$ and $V^2$ are unitary VOSAs, then $V^1\hat{\otimes}V^2$ is too.
	Moreover, by \cite[Proposition 3.31]{Gui22} and its proof, if $V^1$ and $V^2$ are regular VOAs of CFT type, then they are strongly unitary, resp.\ completely unitary, VOAs if and only if $V^1\otimes V^2$ is strongly unitary, resp.\ completely unitary. 
\end{rem}

\begin{rem}  \label{rem:coset_subalgebras}
	Let $W$ be a vertex subalgebra of a VOSA $V$. Then the \textit{coset} (or commutant) \textit{subalgebra} of $W$ in $V$, see e.g.\ \cite[Remark 4.6b]{Kac01}, cf.\ also \cite[Section 5]{FZ92} and \cite[Section 3.11]{LL04}, 
	$$
	\Com_V(W):=\{a\in V\mid [Y(a,z),Y(b,w)]=0 \,\,\forall b\in W\}
	=\left\{a\in V\mid b_{(j)}a=0 \,\,\substack{\forall b\in W \\ \forall j\in\Zpluseq}\right\}
	$$
	(in some references, it is simply denoted by $W^c$ when no confusion arises), is a vertex subalgebra of $V$. 
	By \cite[Theorem 5.1]{FZ92}, see also \cite[Theorem 3.11.12]{LL04}, if $V$ is a VOA of CFT type and $W$ is a vertex subalgebra, whose vertex algebra structure has a conformal vector $\nu^W\in V_2$, satisfying $L_1\nu^W=0$, then $\Com_V(W)$ is a VOA too with conformal vector $\nu^{\Com_V(W)}=\nu-\nu^W$ and the operators $L_n$ and $L^{W}_n$ coincide on $W$ for all $n\geq -1$.
	By \cite[Proposition 3.36]{CGH}, if $V$ is a simple unitary VOSA with $W$ a unitary subalgebra, then $\Com_V(W)$ is a unitary subalgebra too with conformal vector $\nu^{\Com_V(W)}=\nu-\nu^W$ and the operators $L_0^W$ and $L_0^{\Com_V(W)}$ are simultaneously diagonalizable on $V$ with non-negative eigenvalues.
\end{rem}

\begin{rem}  \label{rem:lattice_vosas}
	Important examples of unitary VOSAs used in this paper are \textit{lattice VOSAs}, see e.g.\ \cite[Section 5.5]{Kac01} and \cite[Section 6]{Xu98}. The reader can also refer to \cite[Section 4.4]{DL14} and \cite[Section 2.4]{AL17} for the basic unitary structure of these models. See also \cite{CGH19} for some example of unitary subalgebras of lattice VOAs. Recall that to every \textit{positive-definite integral lattice}, that is a free $\Z$-module $L$ of finite rank equipped with a scalar product $\pairing:L\times L\to \Z$, is associated a unique, up to isomorphism, simple unitary VOSA $V_L$. In particular, if $L$ is \textit{even}, that is $\langle \alpha, \alpha\rangle\in 2\Z$ for all $\alpha\in L$, then $V_L$ is a VOA; otherwise $L$ is \textit{odd} and $(V_L)_\parone\not=\{0\}$.
	Moreover, the even part $(V_L)_\parzero$ of a lattice VOSA $V_L$ is the lattice VOA $V_{L_\parzero}$ arising from the \textit{even sublattice} $L_\parzero:=\{\alpha\in L\mid \langle \alpha,\alpha\rangle \in 2\Z \}$ of $L$.
	All these models are nice, see \cite{DL93} and \cite[Theorem 3.16]{DLM97} for the even case, \cite{BK04} and \cite[Section 6.4]{Xu98} for the odd one. 
	If $L$ is even, then $\Rep(V_L)$ is a pointed even modular tensor category equivalent to the category $\mathcal{C}(L'/L, q_L)$ associated to the discriminant form $(L'/L, q_L)$, see Remark \ref{rem:discriminant_form}, where $L':=\{\beta\in \Q\otimes_\Z L\mid \langle \beta|\alpha\rangle\in \Z\,\,\, \forall \alpha\in L\}$ is the \textit{dual lattice} of $L$ and $q_L:L'/L\to \Q/\Z$ is defined by $q_L(\beta):=\frac{\langle \beta,\beta\rangle}{2} +\Z$.
	Similarly, if $L$ is odd, then the equivalence classes of irreducible $V_L$-modules are parametrized by the equivalence classes $\beta+L\in L'/L$ (where $L'$ is defined by the same formula) and they have conformal weights in $\frac{\langle \beta,\beta\rangle}{2}+\half\Z$ correspondingly. 
	Accordingly, if the lattice $L$ is \textit{unimodular}, that is $L=L'$, then $V_L$ is a nice holomorphic VOSA. 
	Lattice VOAs are also completely unitary by \cite[Theorem 5.8]{Gui21}.
\end{rem}

\begin{lem} \label{lem:unitary_subalgebra_lattice_vosas}
	Let $L$ be a positive-definite integral lattice and $K$ be a sublattice of $L$. Then $V_K$ is a unitary subalgebra of $V_L$.
\end{lem}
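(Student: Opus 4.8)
The plan is to verify the three conditions that, by \cite[Proposition 3.33]{CGH}, characterise a unitary subalgebra of the simple unitary VOSA $V_L$ (see Remark~\ref{rem:lattice_vosas}): that $V_K$ is a vertex subsuperalgebra of $V_L$, that it is $L_1$-invariant, and that it is invariant under the PCT operator $\theta$ of $V_L$. For the first point I would fix a $2$-cocycle on $L$ (equivalently a section of the central extension $\hat{L}\to L$) used to construct $V_L$, and observe that its restriction to $K$ constructs a model of $V_K$. Since $\h_K:=\C\otimes_\Z K$ is a subspace of $\h_L:=\C\otimes_\Z L$ carrying the induced non-degenerate form, one has $S(\hat{\h}_K^{-})\subseteq S(\hat{\h}_L^{-})$ and $\C_\epsilon[K]\subseteq\C_\epsilon[L]$, and inspecting the construction of lattice VOSAs (see the references in Remark~\ref{rem:lattice_vosas}) shows that this realises $V_K$ as a vertex subsuperalgebra of $V_L$ with the same vacuum $\Omega=e^0$; in particular it is $L_{-1}$-invariant. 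The one point requiring genuine care is that $V_K$ need not be full: if $\operatorname{rk}K<\operatorname{rk}L$, then $\nu^{V_K}\neq\nu^{V_L}$, so $L_1$-invariance is not automatic.

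For the remaining two invariances I would use that $V_K$ is generated, as a vertex subalgebra, by the set $S_K:=\{\,h(-1)\Omega\mid h\in\h_K\,\}\cup\{\,e^\alpha\mid\alpha\in K\,\}$ (this is standard: from $h(-1)\Omega$ one recovers all Heisenberg creation modes attached to $\h_K$ via $(h(-1)\Omega)_{(-m)}e^\alpha=h(-m)e^\alpha$), and that every element of $S_K$ is a primary vector of $V_L$. For $L_1$-invariance, recall that for a primary vector $a$ of conformal weight $d_a$ one has $[L_1,a_{(n)}]=(2d_a-n-2)\,a_{(n+1)}$; hence if $a,b\in V_K$ with $L_1b\in V_K$, then $L_1(a_{(n)}b)=(2d_a-n-2)\,a_{(n+1)}b+a_{(n)}(L_1b)\in V_K$. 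Since $L_1\Omega=0$ and $V_K$ is spanned by vectors $(a^1)_{(n_1)}\cdots(a^r)_{(n_r)}\Omega$ with $a^i\in S_K$, an induction on $r$ gives $L_1V_K\subseteq V_K$. (Alternatively one may write $\nu^{V_L}=\nu^{V_K}+\nu'$, where $\nu'$ is the conformal vector of the Heisenberg subalgebra attached to the orthogonal complement of $\h_K$ in $\h_L$, and check directly that the degree-one Virasoro mode $L_1'$ of $Y(\nu',z)$ annihilates $V_K$, because every Heisenberg mode $h(j)$ with $h\perp\h_K$ and $j\geq 0$ does.)

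Finally, for PCT-invariance, recall that the PCT operator $\theta$ of $V_L$ acts on Heisenberg modes through the conjugation $\overline{\,\cdot\,}$ fixing $\R\otimes_\Z L$ and sends each $e^\alpha$ to a nonzero multiple of $e^{-\alpha}$; since $\R\otimes_\Z K$ is stable under that conjugation and $-K=K$, the set $S_K$ is carried into $\C^{\times}\cdot S_K$ by $\theta$, and as $\theta$ is a vertex superalgebra automorphism fixing $\Omega$ it preserves the subalgebra generated by $S_K$, that is $\theta(V_K)\subseteq V_K$. With the three conditions established, \cite[Proposition 3.33]{CGH} yields that $V_K$ is a unitary subalgebra of $V_L$ (and, by \cite[Proposition 3.35]{CGH}, its inherited unitary structure is the restriction of that of $V_L$). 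I expect no real obstacle beyond bookkeeping: the only delicate points are the compatibility of cocycle choices on $L$ and $K$ for the embedding, and the rank-dropping case flagged above, where $L_1$-invariance genuinely uses that the orthogonal-complement Heisenberg modes act trivially on $V_K$ in degree one.
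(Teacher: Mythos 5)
Your proposal is correct and follows essentially the same route as the paper: both reduce the claim to checking $L_1$-invariance and PCT-invariance of $V_K$ inside $V_L$, the former via the orthogonal decomposition $\nu=\nu^{K}+\nu^{K^\perp}$ of the conformal vector (which you give as your parenthetical alternative; your primary argument via the commutator formula for the primary generators $h(-1)\Omega$ and $e^\alpha$ is an equivalent computation), and the latter via the explicit action of $\theta$ on generators from \cite[Theorem 2.9]{AL17}. Your additional care about cocycle compatibility and the rank-dropping case is sound but not needed beyond what the paper records.
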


\begin{proof}  
	By \cite[Proposition 5.5]{Kac01}, a conformal vector for a lattice VOSA $V_Q$, for any integral lattice $Q$ with non-degenerate bilinear form, can be realized as the conformal vector of its Heisenberg subalgebra generated from the complexification $\C\otimes_\Z Q$ of $Q$. Then we can choose the conformal vector $\nu$ of $V_L$ as $\nu^K+ \nu^{K^\perp}$, where $\nu^K$ and $\nu^{K^\perp}$ are conformal vectors arising from $K$ and its orthogonal complement $K^\perp$ in $L$. 
	Letting act $Y(\nu,z)=\sum_{n\in\Z}L_nz^{-n-2}$ on the generators of $V_K$, it is not difficult to see that $V_K$ turns out to be $L_1$-invariant.
	
	By \cite[Theorem 2.9]{AL17}, $V_L$ is a unitary VOSA with PCT operator $\theta$ defined on the generators by:
	\begin{equation} \label{eq:pct_lattice_vosas}
		\theta(\alpha^{(1)}_{-n_1}\cdots\alpha^{(m)}_{-n_m}\Omega\otimes e^{\alpha} )
		= (-1)^m\alpha^{(1)}_{-n_1}\cdots\alpha^{(m)}_{-n_m}\Omega\otimes e^{-\alpha}
	\end{equation}
	where $m\in\Zpluseq$, $\alpha\in L$, $n_j\in\Zplus$ and $\alpha^{(j)}\in \C\otimes_\Z L$ for all $j\in\{1,\dots, m\}$.
	$V_K$ is clearly $\theta$-invariant by the action \eqref{eq:pct_lattice_vosas}.
	Therefore, $V_K$ turns out to be a unitary subalgebra of $V_L$ as desired. 
\end{proof}

\begin{defin}  \label{defin:vosa_extensions}
	A \textit{simple CFT type VO(S)A extension} $U$ of a simple VOA $V$ of CFT type is a simple VO(S)A of CFT type such that $V$ is a full vertex subalgebra of $U$. Similarly, $U$ is a \textit{unitary CFT type VO(S)A extension} of $V$ if $U$ is a unitary VO(S)A of CFT type (and thus also simple) and $V$ is a full unitary subalgebra of $U$. Accordingly, $U$ is called a \textit{(unitary/\textit{unitarizable}) simple current extension} of $V$ if $U$ is a (unitary/unitarizable) $V$-module which can be decomposed into (unitary/unitarizable) simple currents of $V$.
	A simple current extension $U$ of $V$ is said to be \textit{$G$-graded} for an abelian group $G$ if it decomposes as $\bigoplus_{\chi\in G}V^\chi$, where $V^{1_G}=V$ and $V^\chi\boxtimes V^\eta\cong V^{\chi\eta}$ for all $\chi,\eta\in G$.  
\end{defin}

Note that if $V$ is a strongly rational VOA, then the number of irreducible $V$-modules appearing in the simple current extension $U$ in the above definition is finite.

We recall that in the case of a nice VOA $V$, every isomorphism class of a simple CFT type VOA extension $U$ of $V$ corresponds bijectively to an equivalence class of a \textit{rigid commutative haploid algebra} $B$ with \textit{trivial twist} in $\Rep(V)$, see \cite[Theorem 3.6 and Remark 3.7]{HKL15}, cf.\ also \cite[Theorem 5.2]{KO02}. 
Moreover, $\Rep(U)$ is equivalent to the ribbon tensor category $\Mod_{\Rep(V)}^0(B)$ of \textit{local $B$-modules}, see \cite[Theorem 1.17]{KO02} (we are mostly using the notation and the terminology as in \cite[Section 2]{CGGH23}).
As any $U$-module is also a $V$-module and $V$ and $U$ share the same conformal vector, we have that $U$ satisfies the positivity condition because $V$ does it.
Therefore $U$ is also strongly rational because, as stated at the beginning of \cite[Section 3.5]{Gui22}, the argument used in the proof of \cite[Theorem 4.14]{McR20} easily generalizes to our case.
Therefore $\Rep(U)$ is a modular tensor category and thus $\Mod_{\Rep(V)}^0(B)$ is too.
Now, suppose that $V$ is also completely unitary.
In \cite[Theorem 2.21]{Gui22}, it is proved that every isomorphism class of a unitary CFT type VOA extension $U$ of $V$ corresponds bijectively to an equivalence class of a \textit{commutative haploid special $C^*$-Frobinius algebra} $B$, also called a \textit{commutative haploid Q-system}, with \textit{trivial twist} in $\Repu(V)$. Moreover, by \cite[Theorem 3.30]{Gui22}, $U$ is completely unitary and $\Repu(V)$ is equivalent to $\Mod_{\Rep(V)}^{u,0}(B)$, that is the category of \textit{unitary} local $B$-modules, as unitary modular tensor categories.
By \cite[Theorem 3.2]{CGGH23}, every haploid algebras in a $C^*$-tensor category, as $\Repu(V)$ is, is rigid if and only if it is equivalent to a Q-system in the same category. Therefore, we have:

\begin{theo}[{\cite[Theorem 4.7 and Corollary 4.18]{CGGH23}}] \label{theo:unitarity_vosa_extensions}
	Every simple CFT type VOSA extension $U$ of a completely unitary VOA $V$ is a unitary CFT type VOSA extension of $V$. In particular, $U_\parzero$ is a unitary CFT type VOA extension of $V$, which is also a completely unitary VOA.
\end{theo}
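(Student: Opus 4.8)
The plan is to reduce the whole statement to the complete unitarity of the even part $U_\parzero$, establish the latter with the categorical dictionary recalled above, and then recover the statement about $U$ itself from the even/odd decomposition. First I would check that $U_\parzero$ is a simple CFT type VOA extension of $V$: since $U$ is a simple VOSA of CFT type, $U_\parzero$ is a simple VOA of CFT type (Remark~\ref{rem:niceness_even_odd_parts}), it contains the conformal vector $\nu$ of $U$ (which is even), and as $V$ is a VOA of CFT type its conformal weights are integers, so $V=\bigoplus_{n\in\Z}V_n\subseteq\bigoplus_{n\in\Z}U_n=U_\parzero$; since $\nu\in V$, this inclusion of vertex subalgebras is full.

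The core step is to prove that $U_\parzero$ is completely unitary. As $V$ is completely unitary it is nice (Remark~\ref{rem:niceness_completely_unitary_voas}), so by the correspondence of \cite{HKL15} the simple CFT type VOA extension $U_\parzero$ corresponds to a rigid commutative haploid algebra $B$ with trivial twist in $\Rep(V)$. Because $V$ is strongly unitary, $B$ is unitarizable; equipping it with a unitary structure makes it an algebra in the $C^*$-tensor category $\Repu(V)$, with the same multiplication and unit, still rigid, commutative, haploid and with trivial twist (these are categorical properties of $B$). By \cite[Theorem 3.2]{CGGH23}, $B$ is then equivalent in $\Repu(V)$ to a commutative haploid Q-system $B^{\opu}$ with trivial twist, which by \cite[Theorem 2.21]{Gui22} corresponds to a unitary CFT type VOA extension $\tilde U$ of $V$, completely unitary by \cite[Theorem 3.30]{Gui22}. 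Forgetting the $C^*$-structure of $B^{\opu}$ yields a rigid commutative haploid algebra with trivial twist in $\Rep(V)$ equivalent to $B$, and the extension it determines under \cite{HKL15} is, on one side, $\tilde U$ (the construction of $\tilde U$ from $B^{\opu}$ is the HKL construction applied to its underlying algebra) and, on the other, $U_\parzero$; so bijectivity of the HKL correspondence gives $\tilde U\cong U_\parzero$ as extensions of $V$. Hence $U_\parzero$ is a unitary CFT type VOA extension of $V$ and is completely unitary, which is the second assertion.

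To conclude that $U$ itself is a unitary CFT type VOSA extension of $V$, if $U_\parone=\{0\}$ this is the previous paragraph, so I may assume $U_\parone\neq\{0\}$; then $U_\parone$ is an irreducible $U_\parzero$-module, in fact a $\Z_2$-simple current (Remark~\ref{rem:niceness_even_odd_parts}). Since $U_\parzero$ is completely unitary, hence strongly unitary, $U_\parone$ is unitarizable, i.e.\ it can be made a unitary $U_\parzero$-module; as $U$ is a simple VOSA, Remark~\ref{rem:unitarity_even_odd_parts} then gives that $U$ is a unitary VOSA, with a PCT operator restricting to that of $U_\parzero$. Being also of CFT type, $U$ is a unitary CFT type VOSA; and $V$, which is a full unitary subalgebra of $U_\parzero$ by the previous paragraph, is a full unitary subalgebra of the unitary subalgebra $U_\parzero$ of $U$ (Remark~\ref{rem:fixed-point_subalgebras}), hence a full unitary subalgebra of $U$. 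Therefore $U$ is a unitary CFT type VOSA extension of $V$.

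I expect the main obstacle to be the core step, the complete unitarity of $U_\parzero$: beyond matching the two correspondences — the rigid-algebra one of \cite{HKL15} over $\Rep(V)$ and the Q-system one of \cite{Gui22} over $\Repu(V)$ — through the forgetful functor and \cite[Theorem 3.2]{CGGH23}, one has to be sure that rigidity, commutativity, haploidity and triviality of the twist are genuinely preserved both when transporting $B$ between the two categories and along the equivalence with $B^{\opu}$, as this is what licenses the application of \cite[Theorem 2.21 and Theorem 3.30]{Gui22}. Once $U_\parzero$ is known to be completely unitary, the passage to $U$ via Remark~\ref{rem:unitarity_even_odd_parts} is straightforward.
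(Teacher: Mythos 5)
Your proposal is correct and follows essentially the same route as the paper: the paper quotes this result from \cite[Theorem 4.7 and Corollary 4.18]{CGGH23} and the paragraph preceding the theorem lays out exactly the ingredients you use, namely the correspondence of \cite{HKL15} between simple CFT type extensions and rigid commutative haploid algebras with trivial twist in $\Rep(V)$, the Q-system correspondence and complete unitarity transfer of \cite[Theorems 2.21 and 3.30]{Gui22}, and \cite[Theorem 3.2]{CGGH23} identifying rigid haploid algebras with Q-systems in a $C^*$-tensor category. Your reduction of the VOSA case to the even part followed by Remark \ref{rem:unitarity_even_odd_parts} is also how the cited corollary proceeds.
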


For the reader convenience, we write a proof of the following fact, which is probably already well-known to experts.

\begin{lem} \label{lem:simple_current_extensions}
		Let $V$ be a nice VOA and $U$ be any simple current extension of $V$. Then $U$ is $G$-graded for some finite abelian group $G$.
		In particular, if $\Rep(V)$ is pointed, then any simple CFT type VOSA extension of $V$ is a $G$-graded simple current extension for some finite abelian group $G$.
\end{lem}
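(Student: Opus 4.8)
The plan is to argue directly with the vertex operators of $U$ and the intertwining operators of $V$, rather than through the algebra-object formalism (which for a genuine VOSA extension of a VOA carries an extra twist subtlety). Write $U=\bigoplus_{i\in I}V^{(i)}$ as a $V$-module, where each $V^{(i)}$ is a simple current and $I$ is finite (as recalled right after Definition \ref{defin:vosa_extensions}); pick $i_0\in I$ with $V^{(i_0)}=V$, the internal adjoint submodule. I would use two standard inputs. First: in a simple VOSA $U$ one has $Y_U(a,z)b\neq0$ whenever $a,b\in U\setminus\{0\}$ — indeed $\{b\in U:Y_U(a,z)b=0\}$ is a two-sided ideal of $U$ (by skew-symmetry together with associativity of vertex operators) which does not contain $\Omega$ once $a\neq0$, and hence is $\{0\}$. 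Second: for all $i,j$ and every irreducible $V$-module $K$, the $K$-isotypic component of $Y_U$ restricted to $V^{(i)}\times V^{(j)}$ is an intertwining operator of type $\binom{K}{V^{(i)}\;V^{(j)}}$, and since $V^{(i)},V^{(j)}$ are simple currents the space $\mathcal{V}\binom{K}{V^{(i)}\;V^{(j)}}\cong\Hom_V(V^{(i)}\boxtimes V^{(j)},K)$ has dimension $1$ if $K\cong V^{(i)}\boxtimes V^{(j)}$ and $0$ otherwise (as $V^{(i)}\boxtimes V^{(j)}$ is again a simple current, hence irreducible).

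Step 1 (the set of appearing classes is a subgroup). For $i,j\in I$ choose $a\in V^{(i)}\setminus\{0\}$ and $b\in V^{(j)}\setminus\{0\}$; then $Y_U(a,z)b\neq0$ by the first input, so some isotypic component of it is a nonzero intertwining operator of type $\binom{K}{V^{(i)}\;V^{(j)}}$ for a summand $K=V^{(k)}$ of $U$, which by the second input forces $V^{(k)}\cong V^{(i)}\boxtimes V^{(j)}$. Hence $G:=\{[V^{(i)}]:i\in I\}$ is a subset of the (abelian) group of isomorphism classes of simple currents of $\Rep(V)$, it contains $[V]$, and it is closed under $\boxtimes$; being finite, a submonoid of a group, it is a subgroup. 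Thus $G$ is a finite abelian group, and in particular it is closed under taking contragredients.

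Step 2 (multiplicity-freeness). The module $V=V^{(i_0)}$ occurs in $U$ with multiplicity one, because a $V$-module map $V\to U$ is determined by the image of $\Omega$, which must lie in $U_0=\C\Omega$ as $U$ is of CFT type. Suppose a simple current $M$ occurred in $U$ with multiplicity $\geq2$, via linearly independent embeddings $\phi_1,\phi_2:M\hookrightarrow U$; by Step 1 there is also an embedding $\psi:M'\hookrightarrow U$. For $s=1,2$, the assignment $(m,m')\mapsto Y_U(\phi_s(m),z)\psi(m')$ is an intertwining operator $\mathcal{Y}_s$ of type $\binom{U}{M\;M'}$; it is nonzero by the first input, and its image is a nonzero $V$-submodule of $U$ which, being a nonzero quotient of the simple module $M\boxtimes M'\cong V$, is the unique internal copy of $V$. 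Hence $\mathcal{Y}_1,\mathcal{Y}_2$ both lie in the one-dimensional space $\mathcal{V}\binom{V}{M\;M'}$, so $\mathcal{Y}_2=\lambda\mathcal{Y}_1$ for some $\lambda\in\C$; then $Y_U(\phi_2(m)-\lambda\phi_1(m),z)\psi(m')=0$ for all $m,m'$, and fixing $m'\neq0$ the first input yields $\phi_2=\lambda\phi_1$, contradicting linear independence.

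Consequently the classes $[V^{(i)}]$ are pairwise distinct, so relabelling the summands as $V^\chi$ for $\chi\in G$ we obtain $V^{1_G}=V^{[V]}=V$, while $V^\chi\boxtimes V^\eta$ is a simple current of class $\chi\eta$ occurring in $U$, hence $V^\chi\boxtimes V^\eta\cong V^{\chi\eta}$; thus $U$ is $G$-graded. For the last assertion, if $\Rep(V)$ is pointed and $U$ is a simple CFT type VOSA extension of $V$, then $U$ is a weak $V$-module and $V$ is regular, so $U$ decomposes into irreducible $V$-modules, each of which is then a simple current; hence $U$ is a simple current extension and the first part applies. I expect the main obstacle to be Step 2 — making precise that the two intertwining operators really do factor through the single internal copy of $V$ and extracting the multiplicity bound from the simplicity input; Step 1 and the final assembly are essentially formal once the two standard inputs are in hand.
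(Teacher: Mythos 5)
Your proof is correct, and it reaches the paper's conclusion by a genuinely different route precisely at the step the paper flags as the non-trivial one. For closure of the set of appearing classes under $\boxtimes$, both arguments restrict $Y_U$ and use simplicity of $U$ to produce a non-zero intertwining operator of type $\binom{U}{M\;N}$ (your ``first input'' is the standard non-vanishing of $Y_U(a,z)b$ for $a,b\neq 0$ in a simple VOSA, cf.\ \cite{DL93}; the paper encodes the same simplicity input via the annihilator ideal $\mathcal{I}_U(N)$); but you then get closure under contragredients for free from finiteness (a finite submonoid of a group is a subgroup), whereas the paper invokes self-contragredience of $U$ via $L_1U_1=\{0\}$ and \cite{CGH}. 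For multiplicity-freeness the paper passes to the categorical setting: it realizes $V\subseteq U_\parzero$ through an algebra object $B$, applies Frobenius reciprocity $\Hom_{\Mod_{\Rep(V)}(B)}(B\boxtimes M,U_\parone)=\Hom_{\Rep(V)}(M,U_\parone)$, and uses that $U_\parone$ is a simple current over $U_\parzero$. You instead argue entirely inside $U$: multiplicity one for $V$ itself from $U_0=\C\Omega$, and for a general summand $M$ by pairing two hypothetical embeddings of $M$ against an embedding of $M'$, noting that the resulting intertwining operators must land in the unique internal copy of $V\cong M\boxtimes M'$ (project onto each irreducible summand $V^{(k)}$ and use that $\mathcal{V}\binom{V^{(k)}}{M\;M'}=0$ unless $V^{(k)}\cong M\boxtimes M'$), hence lie in a one-dimensional space, and then using non-vanishing of products in the simple $U$ to force proportionality of the embeddings. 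This is more elementary and self-contained, avoiding $\Mod^0_{\Rep(V)}(B)$ altogether; the paper's argument is the one that would survive if the summands were not assumed invertible. One cosmetic point: the non-vanishing $Y_U(a,z)b\neq 0$ is established from skew-symmetry together with weak \emph{commutativity} (the usual $(z_1-z_2)^N$ cancellation showing $\{b: Y_U(a,z)b=0\}$ is a submodule), not associativity, but the fact itself is standard and your use of it is sound.
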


\begin{proof}  
	As $V$ is a strongly rational VOA, then $\Rep(V)$ is a modular tensor category.
	Let $\mathcal{G}$ be the finite set of equivalence classes of the simple currents appearing in the decomposition of $U$, considered as a $V$-module, into irreducible $V$-modules. Then the existence of a finite abelian group $G$ is assured if we prove that $\mathcal{G}$ has an abelian group structure induced by $\boxtimes$.
	
	First of all, $V\in\mathcal{G}$ and it will be the identity element. 
	It is also known that $\mathcal{G}$ is closed under the categorical tensor product $\boxtimes$ and taking contragredient, cf.\ e.g.\ \cite[Remark 11.3]{Gui24}. 
	Indeed, for all (equivalence class representatives) $M,N\in\mathcal{G}$, the vertex operator $Y_U$ of $U$ restricts to an intertwining operator $Y_{M,N}$ of $V$ of type $\binom{U}{M\,\,\, N}$. Moreover, $Y_{M,N}$ is non-zero by the simplicity of $U$.
	Indeed, if $Y_{M,N}$ is trivial, then $M$ would be contained in the \textit{annihilator} of $N$ in $U$, that is $\mathcal{I}_U(N):=\{a\in U\mid Y(a,z)c=0\,\,\,\forall c\in N\}$. It is not difficult to see that this is an ideal of $U$, see e.g.\ the proof of \cite[Proposition 4.5.11]{LL04}. Then $\mathcal{I}_U(N)=U$, which implies that $N$ is an ideal of $U$, contradicting its simplicity. 
	Therefore $Y_{M,N}$ further restricts to an intertwining operator of type $\binom{K}{M\,\,\, N}$ for a simple current $K$ in $U$, which must necessarily be isomorphic to $M\boxtimes N$. Then $M\boxtimes N\in\mathcal{G}$.  
	Similarly, we have that for all $M\in \mathcal{G}$, $Y_U$ restricts to a non-zero intertwining operator of type $\binom{M}{V\,\,\, M}$. 
	As $V$ is nice, $L_1U_1=\{0\}$ and thus $U$ is a self-contragredient VOSA by \cite[Proposition 3.8]{CGH}.
	Then it is also self-contragredient as $V$-module, so that $M'\in\mathcal{G}$ for all $M\in\mathcal{G}$.
	Moreover, $\boxtimes$ is associative and commutative as operation on $\mathcal{G}$ thanks to the associator and the braiding of $\Rep(V)$ respectively.
	 
	To conclude, we only need to prove that every simple current $M\in \mathcal{G}$ appears with multiplicity one in the decomposition of $U$, cf.\ \cite[Remark 3.4(i)]{FRS04}.
	To prove it, one can adapt the first part of the proof of \cite[Proposition 4.23]{CGGH23}, see also the one of \cite[Theorem 7.25]{CGH}.
	Note that $U$ decomposes also into $U_\parzero\oplus U_\parone$ as $V$-module and thus if $M\in \mathcal{G}$, then either $M$ is a $V$-submodule of $U_\parzero$ or of $U_\parone$.
	Suppose that $M\subseteq U_\parone$; the case $M\subseteq U_\parzero$ can be done similarly.
	Let $B$ be the rigid haploid commutative algebra in $\mathcal{C}:=\Rep(V)$ giving the extension $V\subseteq U_\parzero$.
	As $V$ satisfies the positivity condition, then $U_\parzero$ does so, so that it is also nice. Then $\Rep(U_\parzero)$ and $\operatorname{Mod}^0_\mathcal{C}(B)$ are equivalent modular tensor categories, see the discussion before Theorem \ref{theo:unitarity_vosa_extensions}.
	With an abuse of notation, we identify $B$ with its corresponding object in $\mathcal{C}$. Then one can define the object $(B\boxtimes M, m_{B\boxtimes M})$ in $\operatorname{Mod}_\mathcal{C}(B)$ by defining the multiplication $m_{B\boxtimes M}:=(m\boxtimes \one_M)a_{B,B, M}$, where $a$ is the associator in $\Rep(V)$.
	Now, by \cite[Theorem 1.6 point 2.]{KO02}, cf.\ also \cite[Lemma 2.61]{CKM24}, we have that 
	\begin{equation}
		\Hom_{\operatorname{Mod}_\mathcal{C}(B)}(B\boxtimes M, U_\parone)
		=\Hom_\mathcal{C}(M, U_\parone)
	\end{equation}
	where the right hand side is non-zero as $M\subseteq U_\parone$. Therefore as $U_\parone$ is a $\Z_2$-simple current in $\Repu(U_\parzero)\equiv\operatorname{Mod}_\mathcal{C}^0(B)$, see Remark \ref{rem:niceness_even_odd_parts}, and thus in $\operatorname{Mod}_\mathcal{C}(B)$, the left hand side must be equivalent to $\C$ as claimed.
	Finally, if $\Rep(V)$ is pointed, then any simple object is a simple current and thus any simple CFT type VOSA extension of $V$ must be a $G$-graded simple current extension for some finite abelian group $G$.
\end{proof}

\subsection{VOSAs and graded-local conformal nets}
\label{subsec:preliminaries_VOSAs_gl_conformal_nets}

Definitions and properties of \textit{graded-local conformal nets} and their representation theory can be found in \cite[Sections 1--4]{CKL08}, see also \cite[Section 2]{CHKLX15}, \cite[Section 2]{CHL15} and \cite[Section 2]{CGH}.
We briefly recall from \cite[Section 4]{CGH}, cf.\ also \cite[Chapter 6]{CKLW18}, how to define a graded-local conformal net from a unitary VOSA, see Section \ref{subsec:preliminaries_unitary_VOSAs} for notations and results.

\begin{defin}  \label{defin:energy_bounds}
	Let $V$ be a unitary VOSA. A vector $a\in V$, or equivalently its corresponding vertex operator $Y(a,z)$, is said to satisfy the \textit{$k$-th order (polynomial) energy bounds} for some $k\in\R_{\geq0}$, if there exists $M, s\in \R_{\geq0}$ such that 
	\begin{equation} \label{eq:k-th_energy_bounds}
		\norm{a_nb}\leq M(1+\abs{n})^s\norm{(1_V+L_0)^kb}
		\qquad\forall n\in\half\Z \,\,\,\forall b\in V 
	\end{equation} 
	where $\norm{\cdot}$ is the norm on $V$ induced by the invariant scalar product $\scalar$. Furthermore, $a$ is said to satisfy \textit{linear energy bounds} if $k=1$; whereas it is simply said to satisfy \textit{energy bounds} whenever $k$ is not specified. Accordingly, $V$ is said to be \textit{energy-bounded} if all $a\in V$ satisfy energy bounds.
\end{defin} 

Let $\chi:S^1\to \C$ be the function defined by $\chi(z):=e^{i\frac{x}{2}}$ for the unique $x\in(-\pi,\pi]$ such that $z=e^{ix}$. Set $C_\parzero^\infty(S^1):=C^\infty(S^1)$, that is the vector space of \textit{infinite differentiable complex-valued functions} on the \textit{unit circle} $S^1$, and $C_\parone^\infty(S^1):=\{\chi h\mid h\in C^\infty(S^1)\}$. Both of these vector spaces are naturally equipped with a Fréchet topology as we define
\begin{equation}
	f'(z):=\eval{\frac{\mathrm{d}f(e^{ix})}{\mathrm{d}x}}_{e^{ix}=z}
	\qquad\mbox{and}\qquad
	g'(z):=\eval{\frac{\mathrm{d}g(e^{ix})}{\mathrm{d}x}}_{e^{ix}=z}
	=\chi(z)\left(\frac{i}{2}h(z)+h'(z)\right)
\end{equation}
for all $f\in C^\infty(S^1)$ and all $g=\chi h\in C^\infty_\parone(S^1)$.

Let $V$ be an energy-bounded unitary VOSA. Then it is not difficult to show that for all homogeneous $a\in V_\parzero$, all homogeneous $b\in V_\parone$, all $f\in C^\infty(S^1)$ and all $g=\chi h\in C^\infty_\parone(S^1)$, the operators defined by 
\begin{equation}
	\begin{split}
		Y_0(a,f)c
			&:=
		\sum_{n\in\Z} \widehat{f}_na_nc
		\quad\forall c\in V
		\,,\qquad
		\widehat{f}_n:=
		\frac{1}{2\pi}\int_{-\pi}^{\pi}f(e^{ix})e^{-inx}\mathrm{d}x  \\
		Y_0(b,g)c
			&:=
		\sum_{n\in\Z-\half} \widehat{g}_na_nc
		\quad\forall c\in V
		\,,\qquad
		\widehat{g}_n:=
		\frac{1}{2\pi}\int_{-\pi}^{\pi}h(e^{ix})e^{-i(n-\half)x}\mathrm{d}x 
	\end{split}
\end{equation}
are well-defined and closable on the Hilbert space completion $\mathcal{H}$ of $V$ with respect to $\scalar$. Their respective closures $Y(a,f)$ and $Y(b,g)$ are operator-valued distributions such that $Y(\overline{a},\overline{f})\subseteq Y(a,f)^*$ and $Y(\overline{b},\overline{g})\subseteq Y(b,g)^*$, where $\overline{g}(z):=\chi(z)\overline{zh(z)}$ for all $z\in S^1$. In particular, if $a$ (resp.\ $b$) is Hermitian and $f$ (resp.\ $g$) is real-valued, then $Y(a,f)$ (resp.\ $Y(b,g)$) is self-adjoint. Moreover, they have a common invariant domain, that is $\mathcal{H}^\infty$, which is the set of \textit{smooth vectors} for the (closure of the) operator $L_0$ on $\mathcal{H}$.

\begin{defin}
	Let $a\in V$ be any homogeneous vector. Then $Y(a,f)$ with $f\in C_{p(a)}^\infty(S^1)$ is called a \textit{smeared vertex operator}.
\end{defin}

Therefore, for an energy-bounded unitary VOSA $V$, we define
\begin{equation}
	\A_{(V,\scalar)}(I):=W^*\left(\left\{
	Y(a,f)\mid a\in V_n \,,\,\, n\in \half\Zpluseq \,,\,\, f\in C^\infty_{p(a)}(S^1)\,,\,\, \mathrm{supp}f\subset I
	\right\}\right)
	\,\,\forall I\in \J
\end{equation}
where $\J$ is the set of \textit{intervals} of $S^1$ and $W^*(S)$ denotes the \textit{von Neumann algebra generated} by the set $S$ of unbounded operators, see e.g.\ \cite[Section 5.2.7]{Ped89} and \cite[Appendix B.1]{Gui19I} for details.
It can be proved that if $V$ is a simple energy-bounded unitary VOSA, then $\A_{(V,\scalar)}$ is \textit{irreducible} and $Y(\nu,f)$ with real-valued $f\in C^\infty(S^1)$ integrates to a positive-energy strongly-continuous projective unitary representation $U$ on $\mathcal{H}$ of the infinite cover $\Diff^+(S^1)^{(\infty)}$ of the \textit{orientation preserving diffeomorphism group} $\Diff^+(S^1)$ of $S^1$, making $\A_{(V,\scalar)}$ \textit{M{\"o}bius covariant}. 
Actually, $U(r^{(\infty)}(4\pi))=1_\mathcal{H}$, where $\R\ni t\mapsto r^{(\infty)}(t)$ is the infinite cover of the one-parameter subgroup of \textit{rotations}, so that $U$ factors through a representation of the double cover $\Diff^+(S^1)^{(2)}$ of $\Diff^+(S^1)$. Moreover, $U(r^{(\infty)}(2\pi))=\Gamma$, that is the extension of $\Gamma_V$ to $\mathcal{H}$.
Nevertheless, the \textit{graded locality} is not automatically assured, see \cite[Section 10]{Nel59}, see also \cite[Section VIII.5]{RS80}. Therefore:

\begin{defin}
	Let $V$ be a unitary VOSA with invariant scalar product $\scalar$. Then $V$ is said to be \textit{strongly graded-local} if it is energy-bounded and $\A_{(V,\scalar)}$ satisfies the graded locality.
\end{defin}

\begin{theo}[{\cite[Theorem 4.29]{CGH}, \cite[Theorem 6.8]{CKLW18}}]
	For any simple strongly graded-local unitary VOSA $V$ with invariant scalar product $\scalar$, $\A_{(V,\scalar)}$ is diffeomorphism covariant and thus it is an irreducible graded-local conformal net. If $\curlyscalar$ is another unitary structure on $V$, then $(V,\curlyscalar)$ is strongly graded-local and $\A_{(V,\curlyscalar)}$ is (unitarily) isomorphic to $\A_{(V,\scalar)}$.
\end{theo}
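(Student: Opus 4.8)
The final statement is the VOSA version of the standard transfer theorem \cite[Theorem 4.29]{CGH}, building on \cite[Theorem 6.8]{CKLW18}, so the plan is to reproduce the strategy of those references; I sketch it here. The input I would start from is what the running text above already records: for a simple energy-bounded unitary VOSA the net $\A_{(V,\scalar)}$ is irreducible, and the smeared stress–energy field $Y(\nu,f)$ with $f\in C^\infty(S^1)$ real-valued integrates to a positive-energy projective unitary representation $U$ of $\Diff^+(S^1)^{(\infty)}$ on $\mathcal H$ under which $\A_{(V,\scalar)}$ is already M\"obius covariant. Thus the only genuinely new point in the first assertion is covariance under all of $\Diff^+(S^1)$: graded locality, irreducibility and M\"obius covariance are in hand, and positivity of the energy is part of the positive-energy property of $U$.

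\emph{Diffeomorphism covariance.} I would argue as in \cite[Chapter 6]{CKLW18} / \cite[Section 4]{CGH}. Since $\nu\in V_\parzero$ is Hermitian, for real-valued $f$ with $\mathrm{supp}\,f\subset I$ the operator $Y(\nu,f)$ is self-adjoint and, by construction, affiliated with $\A_{(V,\scalar)}(I)$, so $e^{itY(\nu,f)}\in\A_{(V,\scalar)}(I)$ for every $t\in\R$; letting $I$ vary, these unitaries together with $U(\Mob)$ generate $U(\Diff^+(S^1)^{(\infty)})$. The next step is to show that for $\gamma\in\Diff^+(S^1)$ supported in the complement $I'$ of $I$, any lift $U(\tilde\gamma)$ lies in (the even part of) $\A_{(V,\scalar)}(I')$, being a product of exponentials of self-adjoint operators $Y(\nu,f)$ with $\mathrm{supp}\,f\subset I'$, and hence — since $U$ is built from the \emph{even} field $\nu$, so no grading twist intervenes — commutes with $\A_{(V,\scalar)}(I)$ by graded locality. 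Using the transformation rule for smeared vertex operators under $U$ (the tensorial/geometric action for primary vectors, plus lower-order Schwarzian-type corrections for general vectors, which remain localized) one then obtains $U(\tilde\gamma)\,\A_{(V,\scalar)}(I)\,U(\tilde\gamma)^*=\A_{(V,\scalar)}(\gamma I)$, first for $\gamma$ near the identity and then, since $\Mob$ together with the diffeomorphisms localized in a fixed interval generate $\Diff^+(S^1)$, for all $\gamma$. Combined with what is already known, this exhibits $\A_{(V,\scalar)}$ as an irreducible diffeomorphism-covariant graded-local conformal net. I expect the bookkeeping that turns the formal transformation law into a genuine operator identity — keeping the Schwarzian correction terms localized in the same interval and controlling domains via the energy bounds — to be the main technical obstacle.

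\emph{Independence of the unitary structure.} For the second assertion I would invoke the analysis of unitary structures on simple unitary VO(S)As (\cite[Section 5]{CKLW18} in the VOA case, \cite[Section 3]{CGH} in the VOSA case): any other unitary structure $\curlyscalar$ on $V$ is related to $\scalar$ by a VOSA automorphism $g$ of $V$ that is an isometry $(V,\scalar)\to(V,\curlyscalar)$; in particular $g$ commutes with $L_0$ and with $\Gamma_V$. Then $g$ extends to a unitary $U_g\colon\mathcal H_{\scalar}\to\mathcal H_{\curlyscalar}$ intertwining vertex operators, $U_g\,Y(a,z)\,U_g^{-1}=Y(ga,z)$, hence the smeared fields $U_g\,Y(a,f)\,U_g^{-1}=Y(ga,f)$, which are localized in the same interval. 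From $\norm{a_nb}_{\curlyscalar}=\norm{(g^{-1}a)_n(g^{-1}b)}_{\scalar}$ together with $gL_0=L_0g$ and the isometry property one reads off that $(V,\curlyscalar)$ is again energy-bounded, and $U_g$ carries $\A_{(V,\scalar)}(I)$ onto $\A_{(V,\curlyscalar)}(I)$ for every $I\in\J$ while respecting the grading; hence $(V,\curlyscalar)$ inherits graded locality, so it is strongly graded-local, and $U_g$ implements the asserted unitary isomorphism $\A_{(V,\curlyscalar)}\cong\A_{(V,\scalar)}$. This last part I expect to be routine once the uniqueness-up-to-automorphism statement for unitary structures is cited.
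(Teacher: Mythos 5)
This theorem is not proved in the paper: it is quoted verbatim from \cite[Theorem 4.29]{CGH} and \cite[Theorem 6.8]{CKLW18} as a preliminary, so the only meaningful comparison is with the arguments in those references. Your sketch assembles the right ingredients and your treatment of the second assertion is exactly the argument of the sources: two normalized invariant scalar products on a simple unitary VOSA differ by a VOSA automorphism commuting with $L_0$ and $\Gamma_V$, which extends to a unitary between the Hilbert space completions intertwining all smeared fields interval by interval, so energy bounds, graded locality and the net itself are transported. That part is fine.

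Where you diverge from the cited proofs is the derivation of diffeomorphism covariance, and the divergence matters. You propose to prove $U(\tilde\gamma)\,\A_{(V,\scalar)}(I)\,U(\tilde\gamma)^*=\A_{(V,\scalar)}(\gamma I)$ by transforming the individual smeared vertex operators $Y(a,f)$ under $U$, invoking ``the tensorial action for primary vectors plus Schwarzian-type corrections for general vectors.'' For non-quasi-primary $a$ there is no clean closed transformation law (the correction terms involve the whole tower $L_na$ and are not controlled by the energy bounds in any straightforward way), and this is precisely the difficulty the references are engineered to avoid. The actual route in \cite{CKLW18} and \cite{CGH} is purely operator-algebraic: once one knows that $e^{iY(\nu,f)}$ lies in $\A_{(V,\scalar)}(I)$ for real $f$ supported in $I$ (local exponentiation, via linear energy bounds for $\nu$), so that $U(\tilde\gamma)$ is an even local unitary for $\gamma$ localized in $I$, one invokes the Carpi--Weiner-type theorem that an irreducible M\"obius covariant (graded-)local net admitting such an inner implementation of localized diffeomorphisms, extending the M\"obius representation, is automatically diffeomorphism covariant with respect to $U$. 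No transformation law for general smeared fields is ever established or needed. As written, your step from the formal transformation rule to the operator identity is a genuine gap rather than mere bookkeeping; replacing it with the citation of the covariance theorem for nets closes it and recovers the argument of the sources.
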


\begin{defin}
	For any simple strongly graded-local unitary VOSA $V$, we denote by $\A_V$, the unique, up to isomorphism, irreducible graded-local conformal net associated to it.
\end{defin}

\begin{rem}   \label{rem:properties_strongly_graded-local_VOSAs}
	Recall that if $V$ is a simple strongly graded-local unitary VOSA, then: $\Aut_{\scalar}(V)$ is equivalent to the \textit{automorphism group} $\Aut(\A_V)$ of $\A_V$ by \cite[Theorem 4.32]{CGH}; unitary subalgebras of $V$ are strongly graded-local and they are in one-to-one correspondence with the \textit{covariant subnets} (see e.g.\ \cite[Section 2.3]{CGH} and references therein) of $\A_V$ by \cite[Theorem 6.1]{CGH}; if $G$ is a closed subgroup of $\Aut_{\scalar}(V)\equiv\Aut(\A_V)$, then the \textit{fixed-point subnet} $\A_V^G$ is (isomorphic to) $\A_{V^G}$ by \cite[Proposition 6.2]{CGH}; in particular, as the extension $\Gamma$ of the parity operator $\Gamma_V$ is the \textit{unitary grading} of $\A_V$, the \textit{Bose subnet} $(\A_V)_\parzero:=\A_{V}^{\langle 1_\mathcal{H}, \Gamma\rangle}$ of $\A_V$ (sometimes denoted by $\A_V^\Gamma$) is equal to $\A_{V_\parzero}$; if $W$ is a unitary subalgebra of $V$, then the \textit{coset subnet} $\A_W^c$ is (isomorphic to) $\A_{\Com_V(W)}$ by \cite[Proposition 6.3]{CGH}; if $U$ is any simple strongly graded-local unitary VOSA, then the \textit{graded tensor product} $\A_V\hat{\otimes} \A_U$ is (isomorphic to) $\A_{V\hat{\otimes}U}$ by \cite[Corollary 6.6]{CGH} (we will write $\A_V\otimes \A_U$ whenever $V$ or $U$ is a VOA, see p.\ \pageref{eq:defin_graded_tensor_product_vertex_ops}).
\end{rem}

We now recall some facts about the representation theory of (local) conformal nets. The \textit{(locally normal) representations}, also called \textit{DHR} representations, of an irreducible conformal net $\A$ give rise to a braided $C^*$-tensor category, denoted by $\Rep(\A)$, see \cite{FRS89, FRS92}, based on DHR superselection theory \cite{DHR71, DHR74, Haa96}. An equivalent approach in terms of Connes' fusion product is given in \cite{Was95, Was98}, see also \cite[Section 6]{Gui21} and \cite{Guib} for a recent review of the topic.
It is possible to give a notion of \textit{index} for representations of a conformal net $\A$, see e.g.\ \cite[Section 2.1]{GL96} and \cite[Section 2.2]{Car04}, so that the \textit{finite} index ones constitute a full subcategory $\Repf(\A)$ of $\Rep(\A)$.
The operator algebraic counterpart of strongly rational VOAs are morally represented by \textit{completely rational} conformal nets, see \cite[Definition 8]{KLM01}, that is those irreducible conformal nets satisfying \textit{strong additivity}, the \textit{split property} and having finite \textit{$\mu$-index}. We do not go into details of these properties, but we limit ourself to recall that by \cite[Theorem 4.9]{LX04} and \cite[Theorem 5.4]{MTW18}, a conformal net $\A$ is completely rational if and only if $\A$ has finitely many inequivalent irreducible representations and all of them have finite index or equivalently by \cite[Theorem 5.3]{LX04}, if and only if $\A$ has finite $\mu$-index. 
Therefore, let $\A$ be a completely rational conformal net, then: representations in $\Rep(\A)$ are (possibly infinite) direct sums of irreducible ones; whereas the representations in $\Repf(\A)$ are given by just finite direct sums; by \cite[Corollary 37 and Corollary 39]{KLM01}, $\Repf(\A)$ turns out to be a unitary modular tensor category.

It is possible to give a notion of complete rationality for irreducible graded-local conformal nets easily adapting the one in \cite[Definition 8]{KLM01}. 
Let $\A$ be an irreducible graded-local conformal net.
By \cite[Proposition 36]{Lon03}, which is based in part on a graded-local version of \cite[Lemma 22 and Lemma 23]{Lon03}, we have that $\A$ is completely rational if and only if its Bose subnet $\A_\parzero$ is
completely rational.
Moreover, similarly to the local case, we can consider \textit{(locally normal) representations}, also called \textit{DHR} or \textit{Neveu-Schawrz} representations, for the graded-local conformal net $\A$, see  \cite[Sections 2.4--2.5 and Section 4.3]{CKL08}, see also \cite[Section 2]{CHKLX15}, \cite[Section 2]{CHL15}.
Then we give the following definition, extending the usual one for the local case:

\begin{defin}
	A graded-local conformal net $\A$ is said to be \textit{holomorphic} if it is completely rational (or equivalently if its Bose subnet $\A_\parzero$ is) and it has only one irreducible (Neveu-Schawrz) representation, which in turn is given by $\A$ itself. 
\end{defin}

To link the representation theories of completely unitary VOAs and of completely rational conformal nets via strong locality, it is necessary to introduce three fundamental notions for intertwining operators of VOAs: \textit{energy boundedness}, \textit{strong intertwining property} and \textit{strong braiding}. These notions will be crucial to prove the strong graded-locality of some nice holomorphic VOSAs in Section \ref{subsec:gl_conformal_nets}. For this reason, we give here below a brief review, referring to \cite[Section 2]{Gui} and \cite[Section 4]{Gui21} for details.

Let $V$ be a VOA, $\Y$ be an intertwining operator of type $\binom{K}{M \,\,\, N}$ and $w\in M$ be any homogeneous vector. 
Then by definition, $\Y(w, z)$ is a formal series $\sum_{n\in \R}\Y(w)_nz^{-n-1}$, where every coefficient $\Y(w)_n$ is a linear map from $N$ to $K$.  
Suppose that $V$ and $\Y$ are unitary, then the definitions of \textit{$k$-th order/linear energy bounds} or simply \textit{energy bounds} for $\Y(w, z)$ will follow the ones given in Definition \ref{defin:energy_bounds}, see \cite[Section 4.4]{Gui21} and \cite[Section 2.1]{Gui} for details, cf.\ \cite[Section 3.1]{Gui19I}. Furthermore, we have:

\begin{defin}
	Let $V$ be a unitary VOA and $\Y$ be a unitary intertwining operator of type $\binom{K}{M \,\,\, N}$. Then $\Y$ is said to be \textit{energy-bounded} if for all homogeneous $w\in M$, $\Y(w,z)$ is energy-bounded. Any unitary $V$-module $M$ is said to be \textit{energy-bounded} if $Y^M$ is energy-bounded as unitary intertwining operator of type $\binom{M}{V \,\,\, M}$. Accordingly, if $V$ is a simple strongly unitary VOA, then it is said to be \textit{strongly energy-bounded} if every irreducible unitary $V$-module is energy-bounded. Furthermore, $V$ is said to be \textit{completely energy-bounded} if every irreducible unitary intertwining operator of $V$ is energy-bounded.
\end{defin} 

This allows us to define a smeared version of intertwining operators which we recall briefly, see \cite[Section 4.4]{Gui21} or \cite[Section 2.1]{Gui}, cf.\ \cite[Section 3.2]{Gui19I}.

First, see \cite[Section 3.1]{Gui21} or \cite[Section 1.1]{Gui} for details, we call an \textit{arg-function} defined on $I\in \J$ any continuous function $\arg_I:I\to \R$ such that for all $z=e^{ix}\in I$, $\arg_I(z)-x\in 2\pi\Z$. Accordingly, we set $\widetilde{I}:=(I,\arg_I)$, called an \textit{arg-valued interval}, which can be thought as one of the branch of $I$ in the universal cover of $S^1$. We will call $\widetilde{\J}$ the set of arg-valued intervals. We say that $\widetilde{I}$ and $\widetilde{J}$ in $\widetilde{\J}$ are \textit{disjoint} if $I$ and $J$ are. Moreover, we write $\widetilde{I}\subseteq\widetilde{J}$ if $I\subseteq J$ and $\arg_J\restriction_I=\arg_I$. We also say that $\widetilde{I}$ is \textit{anticlockwise} to $\widetilde{J}$ if they are disjoint and $\arg_J(\zeta)<\arg_I(z)<\arg_J(\zeta)+2\pi$ for all $\zeta\in J$ and all $z\in I$; of course, we also say that $\widetilde{J}$ is \textit{clockwise} to $\widetilde{I}$.
For any $\widetilde{I}=(I,\arg_I)\in\widetilde{\J}$ and any $f\in C^\infty(S^1)$ with $\mathrm{supp}f\subset I$, we call $\widetilde{f}:=(f,\arg_I)$ an \textit{arg-valued function}. Accordingly, $C_c^\infty(\widetilde{I})$ is the set of these arg-valued functions. 
Of course, for any inclusion $\widetilde{I}\subseteq\widetilde{J}$ of arg-valued intervals in $\widetilde{\J}$, $C_c^\infty(\widetilde{I})$ turns out to be a natural subspace of $C_c^\infty(\widetilde{J})$. 

For any unitary VOA module $M$, we denote by $\mathcal{H}_M$ its Hilbert space completion with respect to its invariant scalar product $\scalar_M$. Accordingly, if $M$ is energy-bounded, then we denote by $\mathcal{H}_M^\infty$ the subspace of $\mathcal{H}_M$ of smooth vectors for the operator $L_0^M$. 
We suppose that $V$ is a strongly unitary strongly rational VOA, so that every $V$-module is unitary and semisimple, that is it can be written into a finite sum of irreducible $V$-modules.
Let $\Y$ be a unitary intertwining operator of type $\binom{K}{M \,\,\, N}$ such that $\Y(w, z)=\sum_{n\in \R}\Y(w)_nz^{-n-1}$ is energy-bounded for some homogeneous $w\in M$ and let $\widetilde{f}\in C_c^\infty(\widetilde{I})$ for any $\widetilde{I}\in\widetilde{\J}$. 
Note that if $\Y$ is irreducible, then thanks to the translation property of intertwining operators, $\Y(w)_n\not=0$ only for a countable number of $n\in\R$, see \cite[Section 1.3]{Gui19I}.
The same is also true in the non-irreducible case thanks to the semisimplicity of $V$-modules.
Then we define the \textit{smeared intertwining operator} $\Y(w, \widetilde{f})$ as the sesquilinear form on $N\times K$ to $\C$ satisfying
\begin{equation}  
	(u|\Y(w, \widetilde{f})v)_K=\int_{\arg_I(I)}
	(u|\Y(w,e^{ix})v)_K f(e^{ix})\frac{e^{ix}\mathrm{d}x}{2\pi}
	\qquad\forall v\in N \,\,\,\forall u\in K
\end{equation}
where $\scalar_K$ is the invariant scalar product on $K$ (which is linear in the second variable in our setting).
We can actually regard $\Y(w, \widetilde{f})$ as the preclosed operator from $\mathcal{H}_N$ to $\mathcal{H}_K$ with domain $N$, defined by the formula
\begin{equation}   \label{eq:smeared_intertwining_op_series}
	\sum_{n\in\R}\Y(w)_n \widehat{\widetilde{f}}_n
	\,,\qquad
	\widehat{\widetilde{f}}_n:=
	\frac{1}{2\pi}\int_{\arg_I(I)}
	f(e^{ix})e^{-inx}\mathrm{d}x 
\end{equation}
mapping $N$ into $\mathcal{H}_K^\infty$, see \cite[Section 3.2]{Gui19I} for details.
Moreover, its closure maps $\mathcal{H}_N^\infty$ into $\mathcal{H}_K^\infty$. The symbol $\Y(w, \widetilde{f})$ will be usually used to denote the restriction to $\mathcal{H}_N^\infty$ of its closure as unbounded operator.

\begin{rem}  \label{rem:smeared_intertwining_and_vertex_ops}
	Let $\Y(w, \widetilde{f})$ be a smeared intertwining operator as in \eqref{eq:smeared_intertwining_op_series}. 
	If we choose $\Y=Y^M$ for some energy-bounded $V$-module $M$, then $n$ in \eqref{eq:smeared_intertwining_op_series} is an integer and $\widehat{\widetilde{f}}_n=\widehat{f}_n$ for all $n\in\Z$, see \cite[Remark 3.11]{Gui19I}. Moreover, we can extend \eqref{eq:smeared_intertwining_op_series} to all $f\in C^\infty(S^1)$ without any restriction on the supports.
	Now, suppose that $V$ is a simple energy-bounded unitary VOSA with $V_\parone\not=\{0\}$. Then the vertex operator $Y$ of $V$ can be naturally restricted to an irreducible energy-bounded unitary intertwining operator, say $Y_{\parone,\parone}$, of type $\binom{V_\parzero}{V_\parone \,\,\, V_\parone}$. 
	Therefore, for any homogeneous $b\in V_\parone$ and any $g=\chi h\in C^\infty_\parone(S^1)$ with $\mathrm{supp}g\subset I$, it is not difficult to see that
	$Y(b,g)$ coincides with $Y_{\parone,\parone}(b,\widetilde{f})$ on $V_\parone$, where $\widetilde{f}$ is the arg-valued function given by $f(z):=g(z)z^{d_a-1}:=h(z)z^{d_a-\half}$ and $\mathrm{arg}_I(z):=x$ where $z=e^{ix}$ for a unique $x\in(-\pi,\pi]$ for all $z\in S^1$. In particular, the restriction on the support of $g$ can be removed and $Y_{\parone,\parone}(b, \widetilde{f})$ can be defined for all $g\in C^\infty_\parone(S^1)$, still coinciding with $Y(b,g)$ on $V_\parone$. 
\end{rem}
                                                                              
\begin{defin}[{\cite[Definition 1.2.6]{Gui}, cf.\ \cite[Definition 4.6]{Gui21}}]
	Let $\mathcal{P}_0$, $\mathcal{Q}_0$, $\mathcal{R}_0$ and $\mathcal{S}_0$ be pre-Hilbert spaces with completions $\mathcal{P}$, $\mathcal{Q}$, $\mathcal{R}$ and $\mathcal{S}$ respectively. 
	Let $A:\mathcal{P}\to \mathcal{R}$, $B:\mathcal{Q}\to\mathcal{S}$, $C:\mathcal{P}\to \mathcal{Q}$ and $D:\mathcal{R}\to \mathcal{S}$ be preclosed operators whose domains are subspaces of $\mathcal{P}_0$, $\mathcal{Q}_0$, $\mathcal{R}_0$ and $\mathcal{S}_0$ respectively, and whose ranges are inside $\mathcal{P}_0$, $\mathcal{Q}_0$, $\mathcal{R}_0$ and $\mathcal{S}_0$ respectively. The following diagram of preclosed operators
	\begin{equation}
		\xymatrixcolsep{2cm}\xymatrixrowsep{2cm}\xymatrix{ \mathcal{P}_0 \ar[r]^{C} \ar[d]_{A} & \mathcal{Q}_0 \ar[d]_{B}\\		
			\mathcal{R}_0 \ar[r]^{D} & \mathcal{S}_0	} 
	\end{equation}
	is said to \textit{commute strongly} if the closures of the following preclosed operators
	\begin{equation}
		\begin{split}
			R(\xi\oplus\eta\oplus\chi\oplus\zeta)
				&:=
			0\oplus 0\oplus A\xi\oplus B\eta
			\qquad\forall 
			\xi\in\mathcal{D}(A)\,\,\,\forall \eta\in \mathcal{D}(B)
			\,\,\,\forall \chi\in \mathcal{R}\,\,\,\forall \zeta\in\mathcal{S}  \\
			 S(\xi\oplus\eta\oplus\chi\oplus\zeta)
			 &:=
			 0\oplus C\xi\oplus 0\oplus D\chi
			 \qquad\forall 
			 \xi\in\mathcal{D}(C)\,\,\,\forall \eta\in \mathcal{Q}
			 \,\,\,\forall \chi\in \mathcal{D}(D)\,\,\,\forall \zeta\in\mathcal{S}  
		\end{split}
	\end{equation}
	with corresponding domains, in the Hilbert space $\mathcal{P}\oplus\mathcal{Q}\oplus\mathcal{R}\oplus\mathcal{S}$,
	\begin{equation}
			\mathcal{D}(R):=\mathcal{D}(A)\oplus\mathcal{D}(B)\oplus\mathcal{R}\oplus\mathcal{S} \qquad\mbox{and}\qquad
			\mathcal{D}(S):=\mathcal{D}(C)\oplus\mathcal{Q}\oplus\mathcal{D}(D)\oplus\mathcal{S}
	\end{equation}
	 commute strongly, that is the von Neumann algebras generated by them commute.
\end{defin}

Let $V$ be a strongly rational unitary VOA, then for a \textit{full rigid monoidal subcategory} $\mathcal{C}$ of $\Repu(V)$ we mean a class of objects of $\Repu(V)$ such that: it includes the identity object $V$; if $M\in\mathcal{C}$, then $M'$ is equivalent to an object in $\mathcal{C}$; if $M\in\mathcal{C}$, then any suboject of $M$ is equivalent to an object in $\mathcal{C}$; if $M,N\in\mathcal{C}$, then $M\boxtimes N$ is equivalent to an object in $\mathcal{C}$. 
We say that a set $\mathcal{F}$ of objects of $\mathcal{C}$ \textit{generates} $\mathcal{C}$ if any irreducible object of $\mathcal{C}$ is equivalent to a subobject of a tensor product of elements in $\mathcal{F}$.
Therefore, following the beginning of \cite[Section 2.3]{Gui} and \cite[Definition 4.10]{Gui21}, we give:

\begin{defin}   \label{defin:strong_intertwining}
	Let $V$ be a strongly rational unitary VOA. 
	\begin{itemize}
		\item[(i)] 
		Suppose that $V$ is strongly energy-bounded. 
		Let $\Y$ be an energy-bounded unitary intertwining operator of type $\binom{K}{M\,\,\, N}$ and let $w\in M$ be a homogeneous vector. 
		Then $\Y(w,z)$ is said to satisfy the \textit{strong intertwining property} if for all homogeneous $v\in V$, all $\widetilde{I}\in\widetilde{\J}$, all $J\in \J$ disjoint from $I$, all $\widetilde{f}\in C^\infty_c(\widetilde{I})$ and all $g\in C^\infty(S^1)$ with $\mathrm{supp}g\subset I$, the following diagram of precolsed operators commutes strongly
		\begin{equation} \label{eq:strong_intertwining_diagram}
			\xymatrixcolsep{2cm}\xymatrixrowsep{2cm}\xymatrix{ \mathcal{H}_N^\infty \ar[r]^{Y^N(v,g)} \ar[d]_{\Y(w,\widetilde{f})} & \mathcal{H}_N^\infty \ar[d]_{\Y(w,\widetilde{f})}\\		
				\mathcal{H}_K^\infty \ar[r]^{Y^K(v,g)} & \mathcal{H}_K^\infty	}
		\end{equation}
		Accordingly, $\Y$ is said to satisfy the \textit{strong intertwining property} if $\Y(w,z)$ does for all homogeneous vector $w\in M$.
		
		\item[(ii)] 
		Let $\mathcal{C}$ be a full rigid monoidal subcategory of $\Repu(V)$. Suppose that $V$ is strongly local and that any $V$-module in $\mathcal{C}$ is energy-bounded. Let $w$ be a homogeneous vector in a $V$-module $M\in \mathcal{C}$. Then the action of $w$ on $\mathcal{C}$ is said to satisfy the \textit{strong intertwining property} if for all $N, K\in\mathcal{C}$ and all $\Y\in\mathcal{V}\binom{K}{M\,\,\, N}$, $\Y(w,z)$ is energy-bounded and it satisfies the strong intertwining property.
	\end{itemize} 
\end{defin}

\begin{rem} \label{rem:strong_locality_from_strong_intertwining}
	Note that if we choose $\Y$ as the vertex operator $Y$ of $V$ in (i) of Definition \ref{defin:strong_intertwining}, so that $M=N=K=V$, then stating that $Y$ satisfies the strong intertwining property, it is equivalent to state that $V$ is strongly local. 
\end{rem}

In \cite[Section 4.2]{Gui21}, see also \cite[Section 2.5]{Gui}, the author introduce a pair of operators $\mathcal{L}_M$ and $\mathcal{R}_M$ for any semisimple $V$-module $M$. For the sake of simplicity, let suppose that $V$ is strongly rational. 
Let $N$ be any $V$-module and recall the definition of $M\boxtimes N:=
\bigoplus_{k\in\mathcal{E}}\mathcal{V}\binom{M_k}{M \,\,\, N}^*\otimes M_k$ given in \eqref{eq:defin_cat_tensor_product}. 
Then the operator $\mathcal{L}_M$ acts on $N$ as an intertwining operator of type $\binom{M\boxtimes N}{M \,\,\, N}$ defined as follows: 
for all $v\in M$, all $w\in N$, all $k\in \mathcal{E}$, all $u^k\in M_k'$ and all $\Y\in\mathcal{V}\binom{M_k}{M\,\,\, N}$, it must satisfies
\begin{equation}   \label{eq:defin_left_operator}
	\langle
	\mathcal{L}_M(v,z)w, \Y\otimes u^k
	\rangle
	=
	\langle
	\Y(v,z)w,  u^k
	\rangle
\end{equation}
where $\pairing$ in the left hand side is the natural pairing between $\mathcal{V}\binom{M_k}{M \,\,\, N}^*\otimes M_k$ and $\mathcal{V}\binom{M_k}{M \,\,\, N}\otimes M_k'$ for all $k\in \mathcal{E}$, whereas in the right hand side it is the one between $M_k$ and $M_k'$ for all $k\in \mathcal{E}$.
Similarly, with notations as here above, $\mathcal{R}_M$ acts as an intertwining operator of type $\binom{N\boxtimes M}{M \,\,\, N}$ defined by 
\begin{equation} \label{eq:defin_right_operator}
	\mathcal{R}_M(v,z)w:=b_{M,N}\mathcal{L}_M(v,z)w
\end{equation}
where $b_{M,N}$ is the braiding in $\Rep(V)$.
As at the end of \cite[Section 2.5]{Gui} and in \cite[Definition 4.13]{Gui21}, we give the following:

\begin{defin}  \label{defin:strong_braiding}
		Let $V$ be a strongly rational unitary VOA. 
		\begin{itemize}
			\item[(i)]
			Suppose that $V$ is completely unitary and completely energy-bounded. The unitary intertwining operators of $V$ are said to satisfy the \textit{strong braiding} if for all $M,N,K\in\Repu(V)$, for all homogeneous $w\in M$ and all homogeneous $u\in N$, for all $\widetilde{I},\widetilde{J}\in\widetilde{\J}$ such that $\widetilde{I}$ is anticlockwise to $\widetilde{J}$, all $\widetilde{f}\in C^\infty_c(\widetilde{I})$ and all $\widetilde{g}\in C^\infty_c(\widetilde{J})$, the following diagram of precolsed operators commutes strongly
			\begin{equation}  \label{eq:strong_braiding_diagram}
				\xymatrixcolsep{2cm}\xymatrixrowsep{2cm}\xymatrix{ \mathcal{H}_K^\infty \ar[r]^{\mathcal{R}_N(u,\widetilde{g})} \ar[d]_{\mathcal{L}_M(w,\widetilde{f})} & \mathcal{H}_{K\boxtimes N}^\infty \ar[d]_{\mathcal{L}_M(w,\widetilde{f})}\\		
					\mathcal{H}_{M\boxtimes K}^\infty \ar[r]^{\mathcal{R}_N(u,\widetilde{g})} & \mathcal{H}_{M\boxtimes K\boxtimes N}^\infty	} 
			\end{equation}
			
			\item[(ii)] 
			Let $\mathcal{C}$ be a full rigid monoidal subcategory of $\Repu(V)$. Suppose that $V$ is strongly local and that any $V$-module in $\mathcal{C}$ is energy-bounded. Let $M,N\in\mathcal{C}$ and choose $w\in M$ and $u\in N$ be homogeneous vectors. Then the actions of $w$ and of $u$ on $\mathcal{C}$ are said to satisfies the \textit{strong braiding property} if for all $K\in \mathcal{C}$, all $\widetilde{I},\widetilde{J}\in\widetilde{\J}$ such that $\widehat{I}$ is anticlockwise to $\widetilde{J}$, all $\widetilde{f}\in C^\infty_c(\widetilde{I})$ and all $\widetilde{g}\in C^\infty_c(\widetilde{J})$, the diagram \eqref{eq:strong_braiding_diagram} of precolsed operators commutes strongly.
		\end{itemize}
\end{defin}

\begin{rem}   \label{rem:strong_intertwining_from_strong_braiding}
	Suppose that the strong braiding property in (i) of Definition \ref{defin:strong_braiding} holds with $N=V$ and for all homogeneous $u\in N$. Then the strong intertwining property follows, see \cite[Remark 2.5.9]{Gui} and the discussion after \cite[Theorem 4.11]{Gui21}. In particular, $V$ turns out to be strongly local, see Remark \ref{rem:strong_locality_from_strong_intertwining}. 
\end{rem}

Recalling Lemma \ref{lem:simple_current_extensions}, we prove the following result which will be crucial in the proof of Theorem \ref{theo:strong_graded_locality_holo}.

\begin{lem}   \label{lem:strong_graded_locality_simple_current_ext}
	Let $V$ be a strongly local and completely unitary VOA. Let $U=\bigoplus_{\xi\in G} V^\xi$ be a $G$-graded simple current extension of $V$ for a finite abelian group $G$ and let $\mathcal{G}$ be the full rigid monoidal subcategory of $\Repu(V)$ generated by these simple currents. Set $\dot{G}:=G\backslash\{1_G\}$.
	Suppose that $U$ is an energy-bounded VOSA and that: for all $\xi\in \dot{G}$, there exists a non-zero quasi-primary vector $w^\xi\in V^\xi$ such that its action on $\mathcal{G}$ satisfies the strong intertwining property; for all $\xi,\eta\in \dot{G}$, the actions of $w^\xi\in V^\xi$ and $w^\eta\in V^\eta$ on $\mathcal{G}$ satisfy the strong braiding property.
	Then $U$ is strongly graded-local.
\end{lem}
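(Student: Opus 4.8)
The plan is to reduce strong graded-locality of $U$ to a statement about strong (graded) commutativity of smeared vertex operators, and then to feed the hypothesized strong intertwining/strong braiding properties of the distinguished quasi-primary vectors $w^\xi$ through the simple-current decomposition. First I would recall that, since $U$ is assumed energy-bounded, the smeared vertex operators $Y(a,f)$ are well-defined closable operators on $\mathcal{H}$, and by the standard argument (cf.\ \cite[Section 6]{CKLW18}, \cite[Section 4]{CGH}) it suffices to prove graded locality of $\A_{(U,\scalar)}$ on a \emph{generating} set of smeared vertex operators. Here the key observation is that $U = \bigoplus_{\xi\in G}V^\xi$ as a $V$-module, that $V$ is already strongly local (so the vertex operators of $V\subseteq U$ satisfy graded locality among themselves on $\mathcal{H}$), and that each homogeneous subspace $V^\xi$ with $\xi\in\dot G$ is generated, as a $V$-module, by applying modes of $V$ to the single quasi-primary vector $w^\xi$ — more precisely $V^\xi = W(V\cup\{w^\xi\})\cap V^\xi$ up to the action of $V$. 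Hence the net $\A_{(U,\scalar)}$ is generated by the smeared vertex operators of $V$ together with the smeared operators $Y(w^\xi,\cdot)$ for $\xi\in\dot G$, and graded locality reduces to checking strong (anti)commutativity among: (a) two $V$-operators, (b) a $V$-operator and a $Y(w^\xi,\cdot)$, and (c) two operators $Y(w^\xi,\cdot)$ and $Y(w^\eta,\cdot)$ localized in disjoint intervals.

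Case (a) is immediate from strong locality of $V$. For case (b), I would identify the restriction of $Y(w^\xi,\cdot)$ with a smeared unitary intertwining operator of type $\binom{V^\xi}{V^\xi\ \ V}$ (or, acting across the decomposition, with the operator $\mathcal{L}_{V^\xi}$ restricted appropriately), using the identification in Remark \ref{rem:smeared_intertwining_and_vertex_ops} between smeared vertex operators of a VOSA extension and smeared intertwining operators of the even part; then the strong intertwining property of the action of $w^\xi$ on $\mathcal{G}$ — which by hypothesis holds for \emph{all} $N,K\in\mathcal{G}$ and all $\Y\in\mathcal{V}\binom{K}{V^\xi\ \ N}$ — yields exactly the strong commutativity of the diagram \eqref{eq:strong_intertwining_diagram} with the $V$-operator $Y^N(v,g)$. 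For case (c), the identification is via the operators $\mathcal{L}_{V^\xi}$ and $\mathcal{R}_{V^\eta}$: writing $Y_U$ restricted to $V^\xi\times\bigoplus_\eta V^\eta$ in terms of $\mathcal{L}_{V^\xi}$ and using \eqref{eq:defin_right_operator} to express the ``right-moving'' insertion of $w^\eta$ in terms of $\mathcal{R}_{V^\eta}=b\,\mathcal{L}_{V^\eta}$, the strong braiding diagram \eqref{eq:strong_braiding_diagram} for $w^\xi, w^\eta$ (valid for all $K\in\mathcal{G}$) becomes precisely the strong commutativity needed for disjoint, anticlockwise-ordered arg-valued intervals; the $\Z_2$-graded sign is absorbed correctly because $w^\xi\in V^\xi$ lives in $U_{p}$ with parity determined by whether $V^\xi\subseteq U_\parzero$ or $U_\parone$, and this parity matches the braiding/twist data ($b_{M,M}=-\one$ precisely on the odd sector by the remark after Remark \ref{rem:discriminant_form}). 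Finally, the hypothesis that the $w^\xi$ are quasi-primary is what lets us pass between the $S^1$-picture with arg-functions and the honest smeared-vertex-operator picture without extra correction terms (the $e^{zL_1}$ factor in the invariant property \eqref{eq:invariant_property} acts trivially in the relevant mode relations), and it is also what \cite[Section 2.3]{Gui}, \cite[Section 4]{Gui21} require in order to deduce adjoint relations of the form $\mathcal{L}_M(w,\widetilde f)^* \supseteq \mathcal{R}_{M'}(\overline w,\ \cdot\,)$.

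I would then assemble these three cases: they show that the generating smeared operators of $\A_{(U,\scalar)}$ localized in an interval $I$ graded-commute (strongly) with those localized in the complementary interval $I'$; standard von Neumann algebra arguments (taking commutants, using that the grading $\Gamma$ is implemented by $U(r^{(\infty)}(2\pi))$, and the twist/Klein transformation of \cite[Section 4]{CGH}) then upgrade this to graded locality of the net $\A_{(U,\scalar)}(I)\subseteq \A_{(U,\scalar)}(I')^{\mathrm{tw}}$ for all $I\in\J$. Together with the assumed energy bounds, this is exactly strong graded-locality of $U$.

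The main obstacle, I expect, is case (c): carefully matching the sesquilinear-form definition of the smeared operators $\mathcal{L}_{V^\xi}(w^\xi,\widetilde f)$ and $\mathcal{R}_{V^\eta}(w^\eta,\widetilde g)$ — which a priori act between different tensor-product modules $\mathcal{H}_K$, $\mathcal{H}_{M\boxtimes K}$, $\mathcal{H}_{K\boxtimes N}$, $\mathcal{H}_{M\boxtimes K\boxtimes N}$ — with the honest operators $Y(w^\xi,g^\xi)$, $Y(w^\eta,g^\eta)$ acting on the \emph{single} Hilbert space $\mathcal{H}=\bigoplus_{\xi}\mathcal{H}_{V^\xi}$, and verifying that under this identification the strong-commutativity of \eqref{eq:strong_braiding_diagram} transcribes to genuine strong (graded-)commutativity of unbounded operators on $\mathcal{H}$. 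This is precisely the kind of bookkeeping carried out in \cite[Section 4]{Gui21} and \cite[Section 2.3]{Gui} for local conformal nets, and the content here is to check it survives the $\Z_2$-grading; I would invoke \cite[Theorem 6.1 and Corollary 6.6]{CGH} and the coset results of Remark \ref{rem:coset_subalgebras}, \ref{rem:properties_strongly_graded-local_VOSAs} wherever possible to avoid redoing the analysis from scratch.
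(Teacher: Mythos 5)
Your plan follows essentially the same route as the paper's proof: generate $U$ by $V$ together with the $w^\xi$, get strong commutativity of two $V$-operators from strong locality of $V$, of a $V$-operator with $Y(w^\xi,\cdot)$ from the strong intertwining property, and of $Y(w^\xi,\cdot)$ with $Y(w^\eta,\cdot)$ from strong braiding via the identity $\mathcal{R}_{V^\xi}=b\,\mathcal{L}_{V^\xi}$ combined with skew-symmetry (which produces the Klein-twisted operator $Z Y(w^\xi,g)Z^*$ and absorbs the graded sign), then conclude by the generation argument of \cite[Theorem 6.8]{CGH}. The ``bookkeeping'' you correctly flag as the main obstacle is resolved in the paper by assembling the sector-wise strongly commuting diagrams into block-diagonal operators on $\bigoplus_{\zeta}\mathcal{K}_\zeta$ and conjugating by a permutation unitary to recover four copies of $\mathcal{H}_U$, using the polar-decomposition criterion of \cite[Proposition B.5]{Gui19I}.
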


\begin{proof}
	The idea of the proof is to generalize what is done in the proof of \cite[Theorem 7.19]{CGH} for $\Z_2$-graded simple current extensions to the case of generic $G$-graded simple current extensions.
	
	Let $\widehat{G}$ be the dual group of $G$, that is every $\phi$ is a continuous homomorphism from $G$ to the circle group $\T$.  
	Then $\widehat{G}$ has a natural action through automorphisms of $V$ given by $\phi(a):=\phi(\eta) a$ for all $a\in V^\eta$. By \cite{GN03, DZ08}, $V$ is finitely generated as it is strongly rational. Therefore, $U$ is finitely generated too and there exists a unitary structure $\curlyscalar$ on $U$ making $\widehat{G}$ a subset of $\Aut_{\curlyscalar}(U)$, see \cite[Proposition 3.23]{CGH}.
	This implies that the $V^\xi$'s are orthogonal to each other with respect to $\curlyscalar$.
	From now on, we fix on $U$ such unitary structure determined by $\curlyscalar$.
	Of course, the unitary structure on $U$ restricts to unitary structures of $V$-modules on the simple currents.
	Furthermore, the orthogonality of the $V^\xi$'s implies that the Hilbert space completion $\mathcal{H}_U$ of $U$ with respect to $\curlyscalar$ is equal to the direct sum of the Hilbert space completions $\mathcal{H}_\xi$ of $V^\xi$ for all $\xi\in G$ with respect to the corresponding restrictions of $\curlyscalar$. Moreover, we have that $\mathcal{H}_U^\infty=\bigoplus_{\xi\in G}\mathcal{H}_\xi^\infty$.
	Note that the complete unitarity of $V$ assures that $\Repu(V)$ is a unitary modular tensor category. Accordingly, the equivalence between $V^\xi\boxtimes V^\eta$ and $V^{\xi\eta}$ is an equivalence of unitary $V$-modules and thus we can safely identify them in what follows.
	
	For all $\xi\in \dot{G}$ and all $\eta\in G$, the restriction $Y_{\xi,\eta}(w^\xi,z)$ of the vertex operator $Y(w^\xi,z)$ to the $V$-module $V^\eta$ defines an irreducible unitary intertwining operator of $V$ of type $\binom{V^{\xi\eta}}{V^{\xi} \,\,\, V^{\eta}}$, shortly $\binom{\xi\eta}{\xi\,\,\,\eta}$. Moreover, every such intertwining operator is energy-bounded as $U$ is. 
	For all $I\in\J$, define the continuous function $\mathrm{arg}_I:I\to \R$ by $\mathrm{arg}_I(z)=x$ where $z=e^{ix}$ for a unique $x\in(-\pi,\pi]$. Then for all $w^\xi$ and all $f\in C_{p(w^\xi)}^\infty(S^1)$ with $\mathrm{supp}f\subset I$ for some $I\in\J$, we have that 
	\begin{equation}
		Y_{\xi,\eta}(w^\xi,\widetilde{f})=Y(w^\xi, f)\restriction_{\mathcal{H}_\eta^\infty}
		\qquad\forall \xi\in\dot{G}\,,\,\,\,\forall \eta\in G
	\end{equation}
	where $\widetilde{f}$ is the arg-valued function $(f\cdot z^{d_{w^\xi}-1},\mathrm{arg}_I)$, see Remark \ref{rem:smeared_intertwining_and_vertex_ops}. 
	For all $\xi\in \dot{G}$ and all $\eta\in G$, let $\mathcal{L}_{V^\xi}$ and $\mathcal{R}_{V^\xi}$ be the operators defined in \eqref{eq:defin_left_operator} and \eqref{eq:defin_right_operator} respectively. 
	Then there exists $a\in \C$ such that  $\mathcal{L}_{V^\xi}\restriction_{V^\eta}=a Y_{\xi,\eta}$.
	Let $w\in V^\eta$ be any homogeneous vector, then
	\begin{equation}
		\begin{split}
			\mathcal{R}_{V^\xi}(w^\xi,z)w
			&=
			b_{V^\xi,V^\eta}\mathcal{L}_{V^\xi}(w^\xi,z)w \\
			&=
			ab_{V^\xi,V^\eta}Y_{\xi,\eta}(w^\xi,z)w \\
			&=
			ae^{zL_{-1}}Y_{\xi,\eta}(w^\xi,-z)w \\
			&=
			a(-1)^{p(w^\xi)p(w)}Y_{\xi,\eta}(w^\xi,z)w \\
			&=
			(-i)^{p(w^\xi)}Z_V\mathcal{L}_{V^\xi}(w^\xi,z)Z_V^*w
		\end{split}
	\end{equation}
	where we have used the definition of the braiding, see eq.s \eqref{eq:braided_intertwining_operator} and \eqref{eq:defin_braiding}, for the third equality and the skew symmetry \cite[Section 4.2]{Kac01} of $U$ for the fourth one. 
	
	Fix $\xi\in\dot{G}$ and $\eta\in G$.
	By the strong braiding property, we have that for all $\zeta\in G$, the following diagram of preclosed operators 
	\begin{equation} \label{eq:strong_braiding_simple_currents}
		\xymatrixcolsep{4cm}\xymatrixrowsep{4cm}\xymatrix{ \mathcal{H}_\zeta^\infty \ar[r]^{(-i)^{p(w^\xi)}ZY_{\xi,\zeta}(w^\xi,\widetilde{g})Z^*} \ar[d]_{Y_{\eta,\zeta}(w^\eta,\widetilde{f})} & \mathcal{H}_{\zeta\xi}^\infty \ar[d]_{Y_{\eta,\zeta\xi}(w^\eta,\widetilde{f})}\\		
			\mathcal{H}_{\eta\zeta}^\infty \ar[r]^{(-i)^{p(w^\xi)}ZY_{\xi,\eta\zeta}(w^\xi,\widetilde{g})Z^*} & \mathcal{H}_{\eta\zeta\xi}^\infty }
	\end{equation}
	commutes strongly, that is the preclosed operators $R_\zeta$ and $S_\zeta$ on the Hilbert space $\mathcal{K}_\zeta:=\mathcal{H}_\zeta\oplus \mathcal{H}_{\zeta\xi}\oplus \mathcal{H}_{\eta\zeta}\oplus \mathcal{H}_{\eta\zeta\xi}$ defined by
	\begin{equation}
		\begin{split}
			R_\zeta(a\oplus b\oplus c\oplus d)
			&:=
			0\oplus 0\oplus  Y_{\eta,\zeta}(w^\eta,\widetilde{f})a\oplus Y_{\eta,\zeta\xi}(w^\eta,\widetilde{f})b \\
			S_\zeta	(a\oplus b\oplus c\oplus d)
			&:=
			0\oplus (-i)^{p(w^\xi)}ZY_{\xi,\zeta}(w^\xi,\widetilde{g})Z^*a \oplus 0\oplus (-i)^{p(w^\xi)}ZY_{\xi,\eta\zeta}(w^\xi,\widetilde{g})Z^*c
		\end{split}
	\end{equation} 
	with domains
	\begin{equation}
		\mathcal{D}(R_\zeta):=\mathcal{H}_\zeta^\infty\oplus \mathcal{H}_{\zeta\xi}^\infty\oplus \mathcal{H}_{\eta\zeta}\oplus \mathcal{H}_{\eta\zeta\xi}
		\qquad\mbox{and}\qquad
		\mathcal{D}(S_\zeta):=\mathcal{H}_\zeta^\infty\oplus \mathcal{H}_{\zeta\xi}\oplus \mathcal{H}_{\eta\zeta}^\infty\oplus \mathcal{H}_{\eta\zeta\xi}
	\end{equation}
	commute strongly.
	It is useful to rewrite these operators in the matrix form:
	\begin{equation}
		\begin{split}
			R_\zeta&:=\left(\begin{array}{cccc}
				0 & 0 & 0 & 0 \\
				0 & 0 & 0 & 0 \\
				Y_{\eta,\zeta}(w^\eta,\widetilde{f}) & 0 & 0 & 0 \\
				0 & Y_{\eta,\zeta\xi}(w^\eta,\widetilde{f}) & 0 & 0
			\end{array}
			\right)
			\\
			S_\zeta&:=(-i)^{p(w^\xi)}\left(\begin{array}{cccc}
				0 & 0 & 0 & 0 \\
				ZY_{\xi,\zeta}(w^\xi,\widetilde{g})Z^* & 0 & 0 & 0 \\
				0 & 0 & 0 & 0 \\
				0 & 0 & ZY_{\xi,\eta\zeta}(w^\xi,\widetilde{g})Z^* & 0
			\end{array}
			\right) \,.
		\end{split}
	\end{equation}
	Fix an ordering on the elements of $G$, so that $G=\{\zeta_1=1_G,\dots, \zeta_{\abs{G}}\}$. 
	Therefore, on the Hilbert space $\mathcal{K}:=\bigoplus_{j=1}^{\abs{G}}\mathcal{K}_{\zeta_j}$, consider the matrix $R$ and $S$ of preclosed operators, having on the diagonal the blocks of matrices $R_{\zeta_1},\dots, R_{\zeta_{\abs{G}}}$ and $S_{\zeta_1},\dots, S_{\zeta_{\abs{G}}}$ respectively and zero elsewhere.
	Using the polar decomposition as in \cite[Proposition B.5]{Gui19I} for the closures of the operators $R$ and $S$, it is easy to see that they commute strongly as their components do.
	Note that the Hilbert space $\mathcal{K}$ has exactly four copies of every $\mathcal{H}_\zeta$ for all $\zeta\in G$. Moreover, $R$ has exactly two copies of every smeared intertwining operator $Y_{\eta,\zeta}(w^\eta,\widetilde{f})$ for all $\zeta \in G$. Similarly, $S$ has exactly two copies of every $(-i)^{p(w^\xi)}ZY_{\xi,\zeta}(w^\xi,\widetilde{g})Z^*$ for all $\zeta \in G$.
	Consider the unitary operator $u$ permuting the copies of the Hilbert spaces in $\mathcal{K}$ in such a way that $\mathcal{K}$ is mapped into 
	\begin{equation}
		\bigoplus_{j=1}^{\abs{G}}\mathcal{H}_{\zeta_j}
		\oplus
		\bigoplus_{j=1}^{\abs{G}}\mathcal{H}_{(\zeta_j\xi^{-1})\xi}
		\oplus
		\bigoplus_{j=1}^{\abs{G}}\mathcal{H}_{\eta(\eta^{-1}\zeta_j)}
		\oplus
		\bigoplus_{j=1}^{\abs{G}}\mathcal{H}_{\eta(\eta^{-1}\zeta_j\xi^{-1})\xi}
		=\mathcal{H}_U\oplus\mathcal{H}_U\oplus\mathcal{H}_U\oplus\mathcal{H}_U \,.
	\end{equation}
	It is not difficult to see that the operators $uRu^*$ and $uSu^*$ send a generic element 
	\begin{equation}
		a\oplus b\oplus c\oplus d=
		\bigoplus_{j=1}^{\abs{G}}a_{\zeta_j}
		\oplus
		\bigoplus_{j=1}^{\abs{G}}b_{\zeta_j}
		\oplus
		\bigoplus_{j=1}^{\abs{G}}c_{\zeta_j}
		\oplus
		\bigoplus_{j=1}^{\abs{G}}d_{\zeta_j}
	\end{equation}
	of $\mathcal{H}_U\oplus\mathcal{H}_U\oplus\mathcal{H}_U\oplus\mathcal{H}_U$ into
	\begin{equation}
		\begin{split}
			&
			0\oplus 0\oplus
			\bigoplus_{j=1}^{\abs{G}}Y_{\eta, \eta^{-1}\zeta_j}(w^\eta, \widetilde{f})a_{\eta^{-1}\zeta_j} \oplus
			\bigoplus_{j=1}^{\abs{G}}
			Y_{\eta, \eta^{-1}\zeta_j}(w^\eta, \widetilde{f})b_{\eta^{-1}\zeta_j}
			\\
			&
			0\oplus 
			\bigoplus_{j=1}^{\abs{G}}(-i)^{p(w^\xi)}ZY_{\xi,\zeta_j\xi^{-1}}(w^\xi,\widetilde{g})Z^*a_{\zeta_j\xi^{-1}}
			\oplus 0 \oplus
			\bigoplus_{j=1}^{\abs{G}}(-i)^{p(w^\xi)}ZY_{\xi,\zeta_j\xi^{-1}}(w^\xi,\widetilde{g})Z^*c_{\zeta_j\xi^{-1}}
		\end{split}
	\end{equation}
	whenever $a,b$ and $c$ are in $\mathcal{H}_U^\infty=\bigoplus_{j=1}^{\abs{G}}\mathcal{H}_\xi^\infty$ respectively. 
	Then note that 
	\begin{equation}
		\begin{split}
			Y(w^\eta, f)a
			&=
			\bigoplus_{j=1}^{\abs{G}}Y_{\eta, \eta^{-1}\zeta_j}(w^\eta, \widetilde{f})a_{\eta^{-1}\zeta_j} \\
			Y(w^\eta, f)b
			&=
			\bigoplus_{j=1}^{\abs{G}}
			Y_{\eta, \eta^{-1}\zeta_j}(w^\eta, \widetilde{f})b_{\eta^{-1}\zeta_j} \\
			(-i)^{p(w^\xi)}ZY(w^\xi,g)Z^*a
			&=
			\bigoplus_{j=1}^{\abs{G}}(-i)^{p(w^\xi)}ZY_{\xi,\zeta_j\xi^{-1}}(w^\xi,\widetilde{g})Z^*a_{\zeta_j\xi^{-1}} \\
			(-i)^{p(w^\xi)}ZY(w^\xi,g)Z^*c
			&=
			\bigoplus_{j=1}^{\abs{G}}(-i)^{p(w^\xi)}ZY_{\xi,\zeta_j\xi^{-1}}(w^\xi,\widetilde{g})Z^*c_{\zeta_j\xi^{-1}} \,.
		\end{split}
	\end{equation}
	Rewriting everything in terms of matrices, we have found that the two preclosed operators 
	\begin{equation}
		\begin{split}
			uR u^*&:=\left(\begin{array}{cccc}
				0 & 0 & 0 & 0 \\
				0 & 0 & 0 & 0 \\
				Y(w^\eta,f)\restriction_{\mathcal{H}_U^\infty} & 0 & 0 & 0 \\
				0 & Y(w^\eta,f)\restriction_{\mathcal{H}_U^\infty} & 0 & 0
			\end{array}
			\right)
			\\
			uSu^*&:=(-i)^{p(w^\xi)}\left(\begin{array}{cccc}
				0 & 0 & 0 & 0 \\
				ZY(w^\xi,g)\restriction_{\mathcal{H}_U^\infty}Z^* & 0 & 0 & 0 \\
				0 & 0 & 0 & 0 \\
				0 & 0 & ZY(w^\xi,g)\restriction_{\mathcal{H}_U^\infty}Z^* & 0
			\end{array}
			\right) 
		\end{split}
	\end{equation}
	on the Hilbert space $\mathcal{H}_U\oplus\mathcal{H}_U\oplus\mathcal{H}_U\oplus\mathcal{H}_U$ commute strongly.
	Using again the polar decomposition as in \cite[Proposition B.5]{Gui19I} for the closures of the operators $uR u^*$ and $uSu^*$, we can conclude from the strong commutativity of $uRu^*$ and $uSu^*$ that for all $\xi,\eta\in \dot{G}$, the operators $Y(w^\eta,f)$ and $ZY(w^\xi,g)Z^*$ commute strongly. 
	
	The same argument applied by using the strong intertwining property in place of the strong braiding shows that for all homogeneous $v\in V$ and all $\xi\in \dot{G}$, the operators $Y(v,f)$ and $ZY(w^\xi,g)Z^*$ commute strongly (cf.\ Remark \ref{rem:strong_intertwining_from_strong_braiding}).
	To conclude recall that $V$ is strongly local by hypothesis. Moreover, the subset $V\cup \{w^\xi\mid \xi\in \dot{G}\}$ of $U$ generates $U$ itself, see \cite[Proposition 4.5.6]{LL04}, cf.\ also \cite[Lemma 6.1.1]{Li94}. Therefore, we can infer the strong graded locality of $U$ by adapting the proof of \cite[Theorem 6.8, see also Remark 6.9]{CGH}, cf.\ the proof of \cite[Theorem 5.5]{CGGH23}.
\end{proof}

To complete the exposition, we recall how the properties just introduced link the representation theories of completely unitary VOAs and completely rational conformal nets.
The following definition is due to \cite{CWX}, see also \cite[Section 4.5]{Gui21} or \cite[Section 2.3]{Gui}.
	
\begin{defin} \label{defin:strong-integrability}
		Let $V$ be a strongly rational and strongly local unitary VOA and let $M$ be a unitary $V$-module. Then $M$ is said to be \textit{strongly-integrable} if it is energy-bounded and there exists a (necessarily unique) representation $\pi_M$ in $\Rep(\A_V)$, acting on the Hilbert space $\mathcal{H}_M$, such that for all homogeneous $a\in V$, all $I\in\J$, all $f\in C^\infty(S^1)$ with $\mathrm{supp}\subset I$,
		$\pi_M(Y(a,f))$ (defined as at the end of \cite[Section B.1]{Gui19I} using the polar decomposition of $Y(a,f)$) is equal to the closure of $Y^M(a,f)$ on $\mathcal{H}_M$.
\end{defin}

Let $V$ be a strongly local, strongly unitary and strongly rational VOA and let $\mathcal{C}$ be a full rigid monoidal subcategory of $\Repu(V)$. It is proved in \cite[Theorem 4.11]{Gui21} that if there is a generating set $\mathcal{F}$ of irreducible objects for $\mathcal{C}$ such that for all $M\in\mathcal{F}$ there exists a non-zero homogeneous $w\in M$ whose action on $\mathcal{C}$ satisfies the strong intertwining property, then every $V$-module in $\mathcal{C}$ is strongly integrable. Furthermore, for all $M, N\in \mathcal{C}$, $M\boxtimes N$ is positive, so that $\mathcal{C}$ is a \textit{unitary ribbon fusion category}, see \cite[Theorem 4.22 and Remark 4.21]{Gui21}, see also \cite[Theorem 2.4.1]{Gui}, all based on the results in \cite{Gui19II}. In particular, if $\mathcal{C}$ is equivalent to $\Repu(V)$, then $V$ is completely unitary. 

In general, one can define the \textit{strong integrability $^*$-functor}
	\begin{equation}  \label{eq:defin_strong_integrability_functor}
		\mathfrak{F}:\mathcal{C}\to \Rep(\A_V) 
		\,,\quad M\mapsto (\pi_M,\mathcal{H}_M)
	\end{equation}
which is proved to be a fully faithful $^*$-functor, see \cite{CWX} or \cite[Theorem 4.3]{Gui19II}.

\begin{theo}[{\cite[Theorem II, Theorem 2.6.6, Corollary 2.6.7 and Theorem 2.6.8]{Gui}}]  \label{theo:strong_braiding_complete_energy_boundedness}
	Let $V$ be a strongly local, strongly energy-bounded, strongly unitary and strongly rational VOA. Assume that there exists a generating set $\mathcal{F}$ of $\Repu(V)$ such that for all $M\in\mathcal{F}$ there exists a non-zero quasi-primary vector $w\in M$ such that its action on $\Repu(V)$ satisfies the strong intertwining property (this is basically \cite[Condition B]{Gui}). Then:
	\begin{itemize}
		\item[(i)] $V$ is completely unitary and $\Repu(V)$ is a unitary modular tensor category;
		
		\item [(ii)] $\Repu(V)$ is equivalent to a braided $C^*$-tensor subcategory of $\Rep(\A_V)$ via the strong integrability $^*$-functor $\mathfrak{F}$;
		
		\item [(iii)] for any $M,N\in\Repu(V)$ such that unitary intertwining operators of type $\binom{\cdot}{M \,\,\,\cdot}$ and of type $\binom{\cdot}{N \,\,\,\cdot}$ are energy-bounded, the actions of all homogeneous vectors of $M$ and $N$ on $\Repu(V)$ satisfy the strong intertwining property and the strong braiding one;
		
		\item[(iv)] suppose that $V$ is a unitary subalgebra of a VOA $U$ satisfying the same conditions, then $\Com_U(V)$ also satisfies the same conditions if it is strongly rational; if $U$ is completely energy-bounded, then $\Com_U(V)$ is too and all its unitary intertwining operators satisfy the strong intertwining property and the strong braiding;
		
		\item[(v)]  if $W$ is a VOA satisfying the same conditions on $V$, then $V\otimes W$ does too; if $V$ and $W$ are completely energy-bounded, then $V\otimes W$ is too and  all their unitary intertwining operators satisfy the strong intertwining property and the strong braiding.
	\end{itemize}
\end{theo}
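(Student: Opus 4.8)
The plan is to derive each item by translating the corresponding statement of \cite{Gui}, together with its companion \cite{Gui21}, into the setting of the present paper: the theorem is essentially a repackaging of results proved there, but it is worth recording the architecture, since the same circle of ideas underlies Lemma \ref{lem:strong_graded_locality_simple_current_ext} and the applications in Section \ref{subsec:gl_conformal_nets}. For item (i), the stated hypothesis is \cite[Condition B]{Gui}, and the discussion preceding the theorem already yields that every $V$-module in $\Repu(V)$ is strongly integrable and that $M\boxtimes N$ is positive for all $M,N\in\Repu(V)$, by \cite[Theorem 4.11 and Theorem 4.22]{Gui21}; positivity of all fusion products is precisely complete unitarity, and combined with strong rationality, hence modularity of $\Rep(V)$ via the theory recalled in Section \ref{subsec:preliminaries_unitary_VOSAs}, this gives that $\Repu(V)$ is a unitary modular tensor category.

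For items (ii) and (iii), one starts from the fact that the strong integrability $^*$-functor $\mathfrak{F}$ of \eqref{eq:defin_strong_integrability_functor} is fully faithful by \cite{CWX}, cf.\ \cite[Theorem 4.3]{Gui19II}; the substance of \cite[Theorem II and Theorem 2.6.6]{Gui} is that $\mathfrak{F}$ is moreover \emph{braided}, i.e.\ it carries the categorical braiding $b_{M,N}$ of $\Rep(V)$ to the Doplicher--Haag--Roberts statistics operator of $\Rep(\A_V)$. The route runs through (iii): starting from the single quasi-primary vector per generator in $\mathcal{F}$ whose action satisfies the strong intertwining property, one bootstraps to all homogeneous vectors in all modules $M,N$ with energy-bounded intertwining operators, by propagating strong commutativity through fusion products using the operators $\mathcal{L}_M$ and $\mathcal{R}_N$ of \eqref{eq:defin_left_operator} and \eqref{eq:defin_right_operator}; the strong braiding diagram \eqref{eq:strong_braiding_diagram} is then obtained by the same propagation argument, now with $\mathcal{R}_N$ smeared into the picture. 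Once the strong braiding holds at the level of smeared intertwining operators, the categorical braiding is identified with the monodromy of the net by a Connes-fusion computation in the spirit of Wassermann, which is exactly what makes $\mathfrak{F}$ braided and yields (ii).

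For items (iv) and (v), one uses on the coset side that $\A_{\Com_U(V)}$ is the coset subnet of $\A_V$ inside $\A_U$, cf.\ Remark \ref{rem:coset_subalgebras} and Remark \ref{rem:properties_strongly_graded-local_VOSAs}, and that strong integrability, energy bounds, and the strong intertwining and strong braiding properties all restrict along coset inclusions; this is \cite[Corollary 2.6.7 and Theorem 2.6.8]{Gui}, granting strong rationality of $\Com_U(V)$. On the tensor-product side one uses that $\A_{V\otimes W}\cong\A_V\otimes\A_W$ by Remark \ref{rem:properties_strongly_graded-local_VOSAs}, that all these properties are stable under tensoring of nets and of VOAs, and that a generating set for $\Repu(V\otimes W)$ is obtained from generating sets of the two factors by taking products of their modules, using that irreducible modules of a tensor product are products of irreducible modules of the factors, cf.\ Remark \ref{rem:graded_tensor_product}.

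The hard part is the braidedness of $\mathfrak{F}$ in items (ii) and (iii): proving that the braiding obtained from analytic continuation of intertwining operators agrees with the statistics operator of the conformal net $\A_V$. This is precisely where the strong braiding property of Definition \ref{defin:strong_braiding} is indispensable, and it is the technical heart of \cite{Gui}; everything else is bookkeeping built on top of the machinery of \cite{Gui21} and \cite{Gui}.
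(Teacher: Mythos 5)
This theorem is not proved in the paper at all: it is imported verbatim from \cite[Theorem II, Theorem 2.6.6, Corollary 2.6.7 and Theorem 2.6.8]{Gui}, with item (i) also justified by the discussion immediately preceding the statement via \cite[Theorem 4.11 and Theorem 4.22]{Gui21}. Your sketch is a faithful and correctly attributed reconstruction of how those cited results assemble, so it is consistent with (indeed, more detailed than) what the paper itself provides, and there is nothing to fault.
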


\begin{rem}  \label{rem:examples_strong_braiding_complete_energy_boundedness}
	Recall that lattice VOSAs, see Remark \ref{rem:lattice_vosas}, are strongly graded-local by \cite[Theorem 7.19]{CGH}. 
	Moreover, lattice VOAs satisfy the hypotheses of Theorem \ref{theo:strong_braiding_complete_energy_boundedness} and are completely energy-bounded by \cite[Theorem 5.8]{Gui21} and \cite[Theorem 2.7.9]{Gui}. Other VOAs satisfying those hypotheses and the complete energy boundedness are given in \cite[Section 2.7]{Gui}; some references about these models can be found in \cite[Table 1]{CGGH23}  (note that not all VOAs there satisfy the complete energy boundedness).
	Among them, there is the \textit{unitary discrete series of Virasoro VOAs with central charge $c<1$}, usually denoted by $L(c,0)$.
\end{rem}

\begin{rem}
	The theory developed above allows also to link VOSA extensions with graded-local conformal net ones, see \cite[Section 4]{CGGH23} and references therein.
\end{rem}

\section{Main results}

\subsection{Unitarity}
\label{subsec:unitarity}

To prove the unitarity of nice holomorphic VOSAs with central charge $c\leq 24$, we will rely on the tools developed for their classification given in \cite{HM}. 
Differently from \cite[Convention 2.2]{HM}, the odd parts of our VOSAs can be zero.
We also use the notation and the results presented in Section \ref{subsec:preliminaries_unitary_VOSAs}.

We denote by $F^l$ the graded tensor product of $l\in\Zpluseq$ copies of the \textit{real free fermion VOSA} $F$, see e.g.\ \cite[Section 3.6 and Prposition 4.10(b)]{Kac01} and \cite[Section 2.3]{AL17}. Therefore, one of the tools for the classification result is the \textit{Free-Fermion Splitting}:

\begin{prop}[{\cite[Section 2.6 and Section 3.2]{HM}}]   \label{prop:free-fermion-splitting}
	For all $c\in\half\Zpluseq$ and all $l\in\Zpluseq$, the map $V\mapsto V\hat{\otimes}F^l$ defines a bijection between isomorphism classes of nice holomorphic VOSAs with central charge $c$ and with weight-$\half$ subspace of dimension $k\in\Zpluseq$ and isomorphism classes of nice holomorphic VOSAs with central charge $c+\frac{l}{2}$ and with weight-$\half$ subspace of dimension $k+l$.
	In particular, every nice holomorphic VOSA $V$ with central charge $c\in\half\Zpluseq$ is isomorphic to a graded tensor product $\bar{V}\hat{\otimes} F^l$ for some $l\in\Zpluseq$, where $F^l\cong W(V_\half)$, that is the vertex subalgebra of $V$ generated by $V_\half$, and $\bar{V}:=\Com_V(W(V_\half))$. 
	Furthermore, $\bar{V}_\half=\{0\}$, both $\bar{V}$ and $F^l$ are nice and holomorphic.
\end{prop}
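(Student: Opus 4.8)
The plan is to obtain the whole statement from the canonical decomposition $V\cong W(V_\half)\hat{\otimes}\bar V$ in its second half, and then read off the bijection by bookkeeping with central charges and with the dimensions of the weight-$\half$ subspaces. As a preliminary I record that a graded tensor product of two nice holomorphic VOSAs is again nice and holomorphic: strong rationality is contained in Remark~\ref{rem:graded_tensor_product}, CFT type and self-contragredience are clear, holomorphicity holds because the irreducible untwisted (resp.\ $\Gamma$-twisted) modules of a graded tensor product are the graded tensor products of the irreducible untwisted (resp.\ $\Gamma$-twisted) modules of the factors, so that ``the adjoint is the only irreducible module'' is inherited, and the positivity condition is then automatic. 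Iterating with $F$, the VOSA $F^l$ is nice holomorphic with central charge $\frac{l}{2}$ and $\dim (F^l)_\half=l$; and for any nice holomorphic $V$ with $\dim V_\half=k$, the VOSA $V\hat{\otimes}F^l$ is nice holomorphic with central charge $c+\frac{l}{2}$ and, since $(V\hat{\otimes}F^l)_\half=V_\half\otimes\C\Omega\oplus\C\Omega\otimes(F^l)_\half$ by CFT type, with $\dim(V\hat{\otimes}F^l)_\half=k+l$. Hence the map is well defined and has the asserted effect on $c$ and on $\dim(\cdot)_\half$.

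For the structural core, let $V$ be nice holomorphic with central charge $c$ and put $l:=\dim V_\half$. By the correct spin and statistics connection $V_\half\subseteq V_\parone$. For $a,b\in V_\half$ the conformal weight of $a_{(n)}b$ is $d_a+d_b-n-1=-n$, so $a_{(n)}b=0$ for $n\geq1$ (CFT type) and $a_{(0)}b\in V_0=\C\Omega$; write $a_{(0)}b=B(a,b)\Omega$. Skew-symmetry of vertex superalgebras shows $B$ is symmetric, and non-degeneracy of the invariant bilinear form of the self-contragredient $V$ forces $B$ to be non-degenerate, so after diagonalising $B$ over $\C$ we obtain a basis $a^1,\dots,a^l$ of $V_\half$ with $a^i_{(0)}a^j=\delta_{ij}\Omega$, i.e.\ $l$ mutually anticommuting free fermions; thus there is a VOSA homomorphism $F^l\to V$ sending the $i$-th fermion to $a^i$, with image $W(V_\half)$, and it is injective because $F^l$ is simple, giving $W(V_\half)\cong F^l$. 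Now $W(V_\half)\cong F^l$ carries the conformal vector $\nu^{F^l}\in V_2$ with $L_1\nu^{F^l}=0$, so by Remark~\ref{rem:coset_subalgebras} the commutant $\bar V:=\Com_V(W(V_\half))$ is a sub-VOSA with conformal vector $\nu-\nu^{F^l}$. Since $F^l$ is regular, the restriction of $V$ is a direct sum of irreducible $F^l$-modules, and since $F^l$ is holomorphic each summand is $\cong F^l$; the corresponding multiplicity space is the set of vectors annihilated by $a^i_{(j)}$ for all $i$ and all $j\geq0$, which by Remark~\ref{rem:coset_subalgebras} is exactly $\bar V$. Hence $V\cong F^l\otimes\bar V$ as $F^l$-modules, and since the $\bar V$-action on $V$ commutes with the $F^l$-action and fixes $\Omega$, this is an isomorphism of $F^l\hat{\otimes}\bar V$-modules and therefore of VOSAs: $V\cong F^l\hat{\otimes}\bar V$. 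Comparing central charges gives $c_{\bar V}=c-\frac{l}{2}$, comparing weight-$\half$ subspaces (with $\dim(F^l)_\half=l$ and CFT type) gives $\bar V_\half=\{0\}$, and $\bar V$ is nice holomorphic: strong rationality and self-contragredience descend from $V$ by cancelling the tensor factor $F^l$ (a weak $\bar V$-module $M$ yields the weak $V$-module $F^l\otimes M$, which by regularity of $V$ decomposes into irreducibles, each of the form $F^l\otimes(\text{irreducible }\bar V\text{-module})$ since $F^l$ is holomorphic, whence $M$ is semisimple; and $V'\cong V$ gives $\bar V'\cong\bar V$), CFT type is immediate, holomorphicity follows since $F^l\otimes N\cong V$ forces $N\cong\bar V$, and the positivity condition holds because the unique $\Gamma_{\bar V}$-twisted $\bar V$-module has minimal weight $\frac{c_{\bar V}}{16}>0$ when $\bar V\neq\C\Omega$ and is otherwise vacuous.

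It remains to assemble the bijection. Surjectivity: a nice holomorphic $V'$ with central charge $c+\frac{l}{2}$ and $\dim V'_\half=k+l$ decomposes as $V'\cong F^{k+l}\hat{\otimes}\bar{V'}$ with $\bar{V'}_\half=\{0\}$; regrouping, $V'\cong V\hat{\otimes}F^l$ with $V:=F^k\hat{\otimes}\bar{V'}$ nice holomorphic, of central charge $c$ and with $\dim V_\half=k$. Injectivity: if $V_1\hat{\otimes}F^l\cong V_2\hat{\otimes}F^l=:V'$, then $\dim(V_i)_\half=\dim V'_\half-l$ is independent of $i$; moreover $W(V'_\half)=W((V_i)_\half)\hat{\otimes}F^l$ inside $V_i\hat{\otimes}F^l$ (using $W((V_i)_\half)\cong F^{\dim(V_i)_\half}$), so by the compatibility $\Com_{A\hat{\otimes}B}(A'\hat{\otimes}B')=\Com_A(A')\hat{\otimes}\Com_B(B')$ we get $\bar{V_1}\cong\Com_{V'}(W(V'_\half))\cong\bar{V_2}$, and hence $V_1\cong W((V_1)_\half)\hat{\otimes}\bar{V_1}\cong W((V_2)_\half)\hat{\otimes}\bar{V_2}\cong V_2$.

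I expect the main obstacle to be the VOSA-level splitting $V\cong F^l\hat{\otimes}\Com_V(W(V_\half))$: one has to identify the multiplicity space of $V$ over the regular, holomorphic subalgebra $F^l$ with the commutant equipped with its own VOSA structure (the dual-pair mechanism of \cite{FZ92} in its super form), and then transfer strong rationality and the positivity condition back to the commutant — the descent of regularity to $\Com_V(W(V_\half))$ being the substantive point, the rest of the argument being bookkeeping.
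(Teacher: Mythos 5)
The paper does not prove this proposition: it is imported verbatim from \cite[Section 2.6 and Section 3.2]{HM}, so there is no internal argument to compare against. Your reconstruction follows the same standard route as the cited source (non-degenerate symmetric form on $V_\half$ producing $W(V_\half)\cong F^l$, then the dual-pair decomposition $V\cong F^l\hat{\otimes}\Com_V(F^l)$ via regularity and holomorphicity of $F^l$, then bookkeeping for the bijection), and almost all of it is sound.

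There is, however, one genuine gap: the descent of the positivity condition to $\bar V$. You dispose of it by asserting that the unique $\Gamma_{\bar V}$-twisted $\bar V$-module has minimal weight exactly $\frac{c_{\bar V}}{16}$. This formula is false: for the free fermion $F$ the canonically twisted module has lowest weight $\frac{1}{16}=\frac{c}{8}$, not $\frac{c}{16}$, and for $VB^\natural$ (central charge $\frac{47}{2}$, with trivial weight-$\half$ subspace, so within the scope of your claim about $\bar V$) the canonically twisted module has lowest weight $\frac{31}{16}\neq\frac{47}{32}$. More importantly, no general positivity statement follows from the decomposition you have established: writing the canonically twisted $V$-module as the graded tensor product of the canonically twisted modules of $F^l$ and of $\bar V$, with lowest weights $\frac{l}{16}$ and $h$ respectively, the positivity condition on $V$ only yields $h+\frac{l}{16}>0$, i.e.\ $h>-\frac{l}{16}$, which does not give $h>0$. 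Since the positivity condition is precisely the non-automatic ingredient of ``nice'' (it is an extra hypothesis on top of strong rationality, and for holomorphic VOSAs it is exactly the statement that the canonically twisted module has positive lowest weight), this step cannot be waved through; it requires an additional input, e.g.\ the structure theory of canonically twisted modules of self-dual VOSAs from \cite{DNR21} as used in \cite{HM}. Two smaller points worth tightening, though not gaps: the claim that irreducible untwisted and $\Gamma$-twisted modules of a graded tensor product are graded tensor products of irreducibles of the factors should be referenced in the super/twisted setting (again \cite{DNR21}), and the identity $\Com_{A\hat{\otimes}B}(A_0\hat{\otimes}B_0)=\Com_A(A_0)\hat{\otimes}\Com_B(B_0)$ used in your injectivity argument deserves a citation or a line of proof.
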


	$\bar{V}$ in Proposition \ref{prop:free-fermion-splitting} is called the \textit{stump} of $V$.
	The Free-Fermion Splitting allows us to solve the unitarity problem for nice holomorphic VOSAs with central charge $c=24$ only and then to obtain the full result as a corollary.

Now, recall the \textit{orbifold construction} for nice holomorphic VOAs with central charge $c\in 8\Zpluseq$ as given in \cite[Section 5]{EMS20}, see \cite[Section 3.3]{MS23} for a brief summary (to state the ``orbifold construction part'' of the following result, we will need the notion of VOA automorphisms \textit{of type $0$}, defined e.g.\ in \cite[p.\ 78]{EMS20}, but this is not important for our purposes).
Then the \textit{Neighbourhood Graph Method} says that: 

\begin{prop}[{\cite[Proposition 3.5 and Proposition 3.7]{HM}}]  \label{prop:neighbourhood_graph_method}
	Every nice holomorphic VOSA $V$ with central charge $c\in 8\Zpluseq$ and with $V_\parone\not=\{0\}$ can be realized by a couple $(U,g)$, where $U$ is a nice holomorphic VOA with central charge $c$ and $g$ is a VOA automorphism of $U$ of order $2$ and type $0$, as follows: the orbifold subalgebra $U^{\langle g\rangle}$ is the even part $V_\parzero$ of $V$; $V_\parzero$ is a nice VOA and $\Rep(V_\parzero)$ is a pointed even modular tensor category with fusion algebra $\C[\Z_2\times \Z_2]$; excluding $V_\parzero$ and $V_\parone$, the two remaining irreducible $V_\parzero$-modules have positive integer conformal weights and they give rise, as $\Z_2$-simple current extensions of $V_\parzero$, to $U$ and the orbifold VOA $U^{\mathrm{orb}(g)}$, which is still nice, holomorphic and with central charge $c$.
\end{prop}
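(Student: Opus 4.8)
The plan is to run the $\Z_2$-orbifold theory of \cite{EMS20} for holomorphic vertex algebras \emph{in reverse}, reconstructing the pair $(U,g)$ from the parity decomposition of $V$, as is done in \cite{HM}. First I would extract what $V=V_\parzero\oplus V_\parone$ already provides: by Remark \ref{rem:niceness_even_odd_parts} the even part $V_\parzero$ is a nice VOA and $V_\parone$ is an irreducible $\Z_2$-simple current $V_\parzero$-module, and since $V$ has the correct spin and statistics connection and is of CFT type, $V_\parone$ is concentrated in weights of $\half+\Zpluseq$, so in the discriminant form of Remark \ref{rem:discriminant_form} one has $q_{V_\parzero}(V_\parone)=\half$. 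Next, being nice and holomorphic, $V$ admits a unique irreducible $\Gamma_V$-twisted module, which splits over $V_\parzero$, under the residual parity action, into two irreducible $V_\parzero$-modules $X_0$ and $X_1$. Invoking the $\Z_2$-orbifold theory for holomorphic vertex (super)algebras — \cite[Section 5, in particular Theorem 5.2 and Proposition 5.6]{EMS20}, in the form adapted to the VOSA setting in \cite{HM} — one gets that $\{V_\parzero,V_\parone,X_0,X_1\}$ is a complete set of representatives for the irreducible $V_\parzero$-modules, that $\Rep(V_\parzero)$ is pointed (and even, since $V_\parzero$ is nice), and that its fusion group $A_{V_\parzero}$ has order $4$.

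The decisive step is to determine the discriminant form $(A_{V_\parzero},q_{V_\parzero})$. We already know $q_{V_\parzero}(V_\parzero)=0$ and $q_{V_\parzero}(V_\parone)=\half$. The modular anomaly relation $\sigma(q_{V_\parzero})\equiv c\pmod 8$ gives, via Milgram's formula, $\sum_{x\in A_{V_\parzero}}e^{2\pi i q_{V_\parzero}(x)}=2\,e^{2\pi i c/8}$; since $c\in 8\Zpluseq$ this equals $2$. Together with $q_{V_\parzero}(V_\parone)=\half$ and the fact that $V_\parone$ has order $2$ in $A_{V_\parzero}$, this rules out $A_{V_\parzero}\cong\Z_4$ and forces $A_{V_\parzero}\cong\Z_2\times\Z_2$ — so the fusion algebra is $\C[\Z_2\times\Z_2]$ — as well as $q_{V_\parzero}(X_0)=q_{V_\parzero}(X_1)=0$. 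In particular $X_0$ and $X_1$ have integral conformal weight, which is moreover \emph{positive} because $V_\parzero$ is nice and $X_0,X_1\not\cong V_\parzero$.

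It remains to assemble $(U,g)$. Since $q_{V_\parzero}(X_i)=0$, each of $\{V_\parzero,X_0\}$ and $\{V_\parzero,X_1\}$ is a maximal isotropic subgroup of $(A_{V_\parzero},q_{V_\parzero})$ and the twists of $X_0,X_1$ are trivial; hence the associated $\Z_2$-simple current extensions are \emph{VOAs} $U:=V_\parzero\oplus X_0$ and $U':=V_\parzero\oplus X_1$, which — being extensions of the nice VOA $V_\parzero$ by a maximal isotropic subgroup of simple currents — are strongly rational (by the argument recalled before Theorem \ref{theo:unitarity_vosa_extensions}), of CFT type and holomorphic, hence nice, with central charge $c$. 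Set $g\in\Aut(U)$ to be $\id$ on $V_\parzero$ and $-\id$ on $X_0$; then $g$ has order $2$ and $U^{\langle g\rangle}=V_\parzero$. The unique irreducible $g$-twisted $U$-module decomposes over $V_\parzero$ as $V_\parone\oplus X_1$, whose summand $X_1$ has integral conformal weight; from this one checks that $g$ is of type $0$ and that the orbifold construction of \cite[Section 5]{EMS20} returns $U^{\mathrm{orb}(g)}=V_\parzero\oplus X_1=U'$. Finally, $(U,g)$ does realize $V$: among $V_\parzero,V_\parone,X_0,X_1$ only $V_\parone$ has half-integral conformal weight, so $V=U^{\langle g\rangle}\oplus V_\parone$ is recovered from $(U,g)$ alone.

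The hard part is the orbifold input of the first paragraph — that $V_\parzero$ has exactly those four irreducible modules with a pointed fusion category, \emph{for a holomorphic VOSA rather than a VOA} — which requires importing and adapting the machinery of \cite{EMS20}, as carried out in \cite{HM}, including the existence and uniqueness of the $\Gamma_V$-twisted module and the precise identification of the $g$-twisted sector of $U$. Verifying the ``type $0$'' condition and that the abstract orbifold construction reproduces $U'$ on the nose is a further point demanding care with the \cite{EMS20} formalism. By contrast the discriminant-form computation is short, resting only on the precise normalization of the anomaly relation recalled above.
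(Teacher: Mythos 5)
The paper offers no proof of this proposition: it is imported wholesale from \cite[Propositions 3.5 and 3.7]{HM}, so the only possible comparison is with the argument in that source, and your reconstruction follows essentially the same route — delegate the count of irreducible $V_\parzero$-modules and the pointedness/evenness of $\Rep(V_\parzero)$ to the $\Z_2$-orbifold machinery (the adaptation of \cite{EMS20} to the VOSA setting via the $\Gamma_V$-twisted sector, as in \cite{HM} and \cite{DNR21}), then determine the discriminant form from $q_{V_\parzero}(V_\parone)=\half$ together with the anomaly relation $\sigma(q)\equiv c\pmod 8$, and realize $U$ and $U^{\mathrm{orb}(g)}$ as the two extensions along the isotropic simple currents. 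The steps you carry out explicitly (the Gauss sum ruling out $\Z_4$ and forcing $q(X_0)=q(X_1)=0$, positivity of the conformal weights from niceness, the type-$0$ verification, and the recovery of $V$ from $(U,g)$ as the unique half-integrally graded simple current) are all correct, and you rightly flag that the four-module count for the fixed points of the parity involution is the genuinely hard imported input rather than something you derive.
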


\begin{rem}
	It may happen that $U$ and $U^{\mathrm{orb}(g)}$ in Proposition \ref{prop:neighbourhood_graph_method} are isomorphic. A sufficient, but not necessary, condition for that is that $V_\half\not=\{0\}$, see \cite[Proposition 3.6]{HM}.
\end{rem}

\begin{rem} \label{rem:holo_voas_cc_24}
	Recall that the nice holomorphic VOAs with central charge $c=24$, see e.g.\ \cite{MS23} and references therein, are proved to be unitary in \cite[Section 5]{CGGH23} and independently in \cite[Section 5]{Lam23}. 
	The \textit{moonshine VOA} $V^\natural$ and the \textit{Leech lattice VOA} $V_\Lambda$, see e.g.\ \cite{FLM88, Miy04}, are of this type.
	For smaller central charges, they are all realized as lattice VOAs \cite[Theorem 1 and Theorem 2]{DM04b}, which are known to be unitary, see Remark \ref{rem:lattice_vosas}.
\end{rem}

The last ingredient we need is a description of a nice holomorphic VOSA $V$ with central charge $c=24$ and with $V_\parone\not=\{0\}$ together with its even part $V_\parzero$ in terms of simple current extensions. To do that, we have to restrict ourselves to the case where the weight-$1$ subspace $V_1$ is non-trivial.  
It is known that $V_1$ is a reductive Lie algebra, see \cite[Theorem 1.1]{DM04}. Then let $H$ be the \textit{Heisenberg VOA} associated to a Cartan subalgebra of $V_1$, see e.g.\ \cite[Section 6.3]{LL04} and \cite[Section 4.3]{DL14} for its unitarity. 
Denote by $W$ the \textit{Heisenberg coset} $\Com_{V_\parzero}(H)$ which has conformal vector $\nu^W=\nu-\nu^H$, see Remark \ref{rem:coset_subalgebras}. 
Of course the central charge of $W$ is $24-\operatorname{rk}(V_1)$, where $\operatorname{rk}(V_1)$ is the dimension of the Cartan subalgebra of $V_1$.
By \cite[Theorem 1]{Mas14}, the double coset $\Com_{V_\parzero}(W)$ is a vertex subalgebra of $V_\parzero$ isomorphic to a lattice VOA $V_K$, where $K$ is a unique, up to isomorphism, positive-definite even lattice of rank $\operatorname{rk}(V_1)$. 
Accordingly, $K$ is called the \textit{associated lattice} of $V_\parzero$.
The couple $(W, V_K)$ is called a \textit{dual pair} (also Howe or commuting pair) in $V$ and their tensor product $W\otimes V_K$ is a full vertex subalgebra of $V_\parzero$.
By \cite[Section 4.3]{CKLR19}, $W$ is strongly rational and thus $\Rep(W)$ is a modular tensor category. Moreover, by \cite[Proposition 4.5]{HM}, $\Rep(W)$ is a pointed even modular tensor category equivalent to $\mathcal{C}(A_W)$ for some discriminant form $A_W$, see Remark \ref{rem:discriminant_form}. Consequently, if $A_K$ is the discriminant form associated to $V_K$, we have that $\Rep(W\otimes V_K)$ is equivalent to the Deligne product (see e.g.\ \cite[Section 1.11]{EGNO15}) $\mathcal{C}(A_W)\boxtimes \mathcal{C}(A_K)\cong \mathcal{C}(A_W\times A_K)$.
Starting from this fact, in \cite[Section 6]{HM}, the authors give an explicit description of every nice holomorphic VOSA with central charge $c=24$ and non-trivial weight-$1$ subspace together with its even part as a simple current extension of $W\otimes V_K$. 
In particular, every nice holomorphic VOSA with central charge $c=24$ and non-trivial weight-$1$ subspace falls into one of the three classes called \textit{of glueing type I}, \textit{II} (here there are two further subclasses called \textit{IIa} and \textit{IIb}) and \textit{III}, depending on how the simple currents in $\Rep(W\otimes V_K)$ glue together to give rise their VO(S)A extensions according to Proposition \ref{prop:neighbourhood_graph_method}.
In \cite[Section 7]{HM}, this description is combined with the orbifold construction given by Proposition \ref{prop:neighbourhood_graph_method} and it turns out that there are (at least, see the \textit{shorter moonshine uniqueness conjecture} in the Introduction for details) other two equivalence classes of nice holomorphic VOSAs with central charge $c=24$ and with $V_\parone\not=\{0\}$, but with trivial weight-$1$ subspace, which are ``non-typical'' VOSAs of glueing type III.
These are the graded tensor product $VB^\natural\hat{\otimes} F$ of the \textit{shorter moonshine VOSA} $VB^\natural$ \cite{Hoe95}, see also \cite[Section 4]{Yam05} and \cite[Section 1]{Hoe10}, with the real free fermion VOSA $F$ and the \textit{odd moonshine VOSA} $VO^\natural$ \cite{DGH88, Hua96}.
Their unitarity is treated separately in the following proposition.

\begin{prop} \label{prop:unitarity_moonshine_type_VOSAs}
	The tensor product $VB^\natural\hat{\otimes} F$ of the shorter moonshine VOSA and a real free fermion VOSA is unitary as well as the odd moonshine VOSA $VO^\natural$. Furthermore, their even parts are completely unitary VOAs. 
\end{prop}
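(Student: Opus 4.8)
The plan is to reduce both assertions to the complete unitarity of the two even parts and then to obtain the latter from the structural results recalled above, in the same spirit as the treatment of the case $V_1\not=\{0\}$ but replacing the dual pair by an orbifold (resp.\ coset) realization.

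\textbf{Reduction.} Each of $V\in\{VB^\natural\hat\otimes F,\,VO^\natural\}$ is a nice holomorphic VOSA with $c=24$ and $V_\parone\not=\{0\}$, hence a simple VOSA of CFT type of which $V_\parzero$ is a full vertex subalgebra; that is, $V$ is a simple CFT type VOSA extension of the nice VOA $V_\parzero$ (a $\Z_2$-simple current extension, by Remark \ref{rem:niceness_even_odd_parts}). Therefore, as soon as $V_\parzero$ is shown to be completely unitary, Theorem \ref{theo:unitarity_vosa_extensions} gives at once that $V$ is a unitary CFT type VOSA extension of $V_\parzero$ --- i.e.\ $V$ is unitary --- and that $V_\parzero$ is completely unitary, which is the whole statement. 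So it remains to treat $(VB^\natural\hat\otimes F)_\parzero$ and $(VO^\natural)_\parzero$.

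\textbf{The odd moonshine VOSA.} By the construction of $VO^\natural$ (\cite{DGH88, Hua96}), equivalently by its place in the classification of \cite{HM}, its even part is isomorphic to the $\Z_2$-orbifold $V_\Lambda^{\langle\theta\rangle}$ of the Leech lattice VOA $V_\Lambda$, where $\theta$ is the standard lift of $-1\in O(\Lambda)$, an automorphism of order $2$; indeed the companion holomorphic VOA produced by Proposition \ref{prop:neighbourhood_graph_method} is $V_\Lambda$ itself (its weight-$1$ space forces $L=\Lambda$), and $V_\Lambda^{\mathrm{orb}(\theta)}=V^\natural$. Now $V_\Lambda$ is unitary (Remark \ref{rem:holo_voas_cc_24}); $\theta$, being the standard lift of a lattice isometry, preserves the standard unitary structure of $V_\Lambda$, so $\theta\in\Aut_{\scalar}(V_\Lambda)$; and the unique irreducible $\theta$-twisted $V_\Lambda$-module is unitarizable by \cite[Section 3.2]{Lam23}. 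Theorem \ref{theo:complete_unitarity_holo_orbifold} then gives that $(VO^\natural)_\parzero\cong V_\Lambda^{\langle\theta\rangle}$ is completely unitary, and the Reduction step concludes this case.

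\textbf{The shorter moonshine VOSA.} The unitarity of $VB^\natural\hat\otimes F$ is already available from \cite[Theorem 7.24]{CGH}, so by the Reduction step I only need $(VB^\natural\hat\otimes F)_\parzero$ completely unitary. It is the simple CFT type VOA extension $(VB^\natural\hat\otimes F)_\parzero=(VB^\natural_\parzero\otimes F_\parzero)\oplus(VB^\natural_\parone\otimes F_\parone)$, so by Theorem \ref{theo:unitarity_vosa_extensions} it suffices that $VB^\natural_\parzero\otimes F_\parzero$ be completely unitary, and since $F_\parzero\cong L(\half,0)$ is completely unitary (Remark \ref{rem:examples_strong_braiding_complete_energy_boundedness}) this reduces, via Remark \ref{rem:graded_tensor_product}, to the complete unitarity of $VB^\natural_\parzero$. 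For the latter I would use that $VB^\natural_\parzero$ is realized as the coset $\Com_{V^\natural}(L(\half,0))$ of a $2A$-type Ising subalgebra of the moonshine VOA (\cite{Hoe95}, cf.\ \cite{HM}): $V^\natural$ is strongly local by \cite[Theorem 5.5]{CGGH23} and, being holomorphic and unitary, satisfies all hypotheses of Theorem \ref{theo:strong_braiding_complete_energy_boundedness} (take the generating set $\{V^\natural\}$ and the quasi-primary vector $\nu$); $L(\half,0)$ is a unitary subalgebra of $V^\natural$ (for a suitable unitary structure, chosen so that the Ising vector is Hermitian --- this does not affect the complete unitarity of its commutant) which also satisfies those hypotheses (Remark \ref{rem:examples_strong_braiding_complete_energy_boundedness}); hence by part (iv) of that theorem $\Com_{V^\natural}(L(\half,0))=VB^\natural_\parzero$ satisfies them as well, being strongly rational by niceness (Remark \ref{rem:niceness_even_odd_parts}), and part (i) then gives that $VB^\natural_\parzero$ is completely unitary. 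Unwinding the reductions concludes. (Alternatively one may note $(VB^\natural\hat\otimes F)_\parzero$ is itself a $\Z_2$-orbifold of $V^\natural$ by a Miyamoto involution and argue as in the previous case, now using \cite[Theorem 8.2]{HM} together with \cite[Section 3.2]{Lam23} to control the twisted sector.)

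\textbf{Main difficulty.} The deductions above are formal once the cited theorems are in hand; the real work lies in pinning down the orbifold (resp.\ coset) realization of each even part --- the holomorphic host VOA and the order-$2$ automorphism for $VO^\natural$, the $2A$-type Ising subalgebra of $V^\natural$ for $VB^\natural\hat\otimes F$ --- and in arranging the unitary structures so that the hypotheses of Theorems \ref{theo:complete_unitarity_holo_orbifold} and \ref{theo:strong_braiding_complete_energy_boundedness} are genuinely met (in particular verifying that the Ising subalgebra is a \emph{unitary} subalgebra, and, in the orbifold route, the positivity of the invariant form on the twisted sector). This structural input draws on the classification of \cite{HM} and on the explicit constructions of these two non-typical models, and it is precisely here that excluding hypothetical fake copies of $VB^\natural$ is needed: for a fake $VB^\natural$ the host VOA would be a hypothetical fake copy of $V^\natural$, whose unitarity is not available.
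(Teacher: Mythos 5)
Your proposal is correct, and for $VO^\natural$ it coincides with the paper's argument: both identify $(VO^\natural)_\parzero$ with $V_\Lambda^+$, the fixed points of the $(-1)$-involution of the Leech lattice VOA, and apply Theorem \ref{theo:complete_unitarity_holo_orbifold} using the unitarity of $V_\Lambda$ and of its twisted module (the paper cites \cite[Theorems 4.12 and 4.14]{DL14} where you cite \cite[Section 3.2]{Lam23}; both suffice, and the paper even records your framed-VOA alternative). For $VB^\natural\hat{\otimes}F$, however, you take a genuinely different route. The paper first gets unitarity of $VB^\natural\hat{\otimes}F$ directly from the known unitarity of $VB^\natural$ and Remark \ref{rem:graded_tensor_product}, then observes that the four irreducible modules of $(VB^\natural\hat{\otimes}F)_\parzero$ are all simple currents that occur inside the two unitary holomorphic extensions $V^\natural$ and $VB^\natural\hat{\otimes}F$ (by \cite[Section 8.1]{HM}), so the even part is strongly unitary, and concludes by Corollary \ref{cor:complete_unitarity_pointed_mtc} since its module category is pointed. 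You instead drill down to $VB^\natural_\parzero=\Com_{V^\natural}(L(\half,0))$ and invoke the coset part (iv) of Theorem \ref{theo:strong_braiding_complete_energy_boundedness}, then climb back up through two applications of Theorem \ref{theo:unitarity_vosa_extensions}. Both work, but they buy different things: the paper's argument is much lighter, needing only \cite[Theorem 8.6]{Gui24} via Corollary \ref{cor:complete_unitarity_pointed_mtc} plus previously established unitarity statements, and it sidesteps the one genuinely delicate point in your route, namely arranging the unitary structure on $V^\natural$ so that the $2A$ Ising vector generates a \emph{unitary} subalgebra (you flag this but do not discharge it; the paper only sets this up later, in case (ii) of Proposition \ref{prop:holo_vosas_strong_braiding} and via \cite[Theorem 7.24]{CGH}, where it is needed anyway). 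In exchange, your route yields strictly more — complete energy boundedness, strong intertwining and strong braiding for $VB^\natural_\parzero$ — which is information the paper postpones to the strong graded locality section rather than folding into the unitarity proof.
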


\begin{proof}
	The unitarity of $VB^\natural\hat{\otimes} F$ follows from the unitarity of $VB^\natural$ proved in \cite[Theorem 7.4]{CGH} and Remark \ref{rem:graded_tensor_product}. To prove the complete unitarity of its even part, recall that the irreducible modules in its modular tensor category are all simple currents, see Proposition \ref{prop:neighbourhood_graph_method}. The possible simple current VOSA extensions are, up to isomorphism, either the moonshine VOA $V^\natural$ or $VB^\natural\hat{\otimes} F$, see \cite[Section 8.1]{HM}. As the moonshine VOA $V^\natural$ is unitary by \cite[Theorem 4.15]{DL14} and $VB^\natural\hat{\otimes} F$ is too as just proved, we have that those simple currents are all unitarizable, see Remark \ref{rem:unitarity_even_odd_parts} and references therein. It follows that $(VB^\natural\hat{\otimes} F)_\parzero$ is strongly unitary and thus it is completely unitary by Corollary \ref{cor:complete_unitarity_pointed_mtc}.
	
	Recall that the even part of the odd moonshine VOSA $VO^\natural$ is the fixed-point subalgebra $V_\Lambda^+$ of the Leech lattice VOA $V_\Lambda$ by its $(-1)$-involution $g$, see \cite{Hua96}. 
	Then the complete unitarity of $V_\Lambda^+$ follows from the unitarity of $V_\Lambda$ and of its irreducible $g$-twisted modules \cite[Theorem 4.12 and Theorem 4.14]{DL14} together with Theorem \ref{theo:complete_unitarity_holo_orbifold}. Then we can conclude that $VO^\natural$ is unitary, see Remark \ref{rem:unitarity_even_odd_parts}.
	An alternative proof can be given by recalling that $V_\Lambda^+$ is a framed VOA \cite[Eq.\ (6.7)]{DMZ94} and invoking \cite[Corollary 4.11 and Corollary 4.18]{CGGH23}.
\end{proof}

The following establishes the unitarity of every nice holomorphic VOSA $V$ with central charge $c=24$ and with $V_\parone\not=\{0\}\not= V_1$. Recall that the unitarity for the case with $V_\parone=\{0\}$ is already known, see Remark \ref{rem:holo_voas_cc_24}.

\begin{theo}  \label{theo:unitarity_holo_VOSAs_cc_24}
	Let $V$ be a nice holomorphic VOSA with central charge $c=24$ and with $V_\parone\not=\{0\}\not= V_1$. Then the tensor product $W\otimes V_K$ of the dual pair $(W,V_K)$ of $V_\parzero$ is a completely unitary VOA. In particular, $W$ and $V_\parzero$ are completely unitary VOAs and $V$ is a unitary VOSA.
\end{theo}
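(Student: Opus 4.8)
The plan is to realize both $V$ and its even part $V_\parzero$ as simple CFT type VO(S)A extensions of the VOA $W\otimes V_K$ and then to invoke Theorem~\ref{theo:unitarity_vosa_extensions}; thus the whole statement reduces to proving that $W\otimes V_K$ is completely unitary. By Remark~\ref{rem:graded_tensor_product} this in turn reduces to the complete unitarity of the two tensor factors separately. The factor $V_K$ is harmless: $K$ is a positive-definite even lattice, so $V_K$ is a lattice VOA, which is completely unitary by \cite[Theorem 5.8]{Gui21} (see Remark~\ref{rem:lattice_vosas}). All the content is in the factor $W$.

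For $W$ I would argue as follows. We know from \cite[Section 4.3]{CKLR19} that $W$ is strongly rational and from \cite[Proposition 4.5]{HM} that $\Rep(W)$ is a pointed even modular tensor category, so by Corollary~\ref{cor:complete_unitarity_pointed_mtc} it suffices to prove that $W$ is strongly unitary, i.e.\ that $W$ is a unitary VOA and that every $W$-module is unitarizable. To get the unitarity of $W$ I would first get that of $V_\parzero$: by the Neighbourhood Graph Method (Proposition~\ref{prop:neighbourhood_graph_method}) one has $V_\parzero=U^{\langle g\rangle}$ for a nice holomorphic VOA $U$ with $c=24$ and an order-two automorphism $g$; $U$ is unitary by Remark~\ref{rem:holo_voas_cc_24} (being holomorphic it is then completely unitary by Theorem~\ref{theo:positivity_fusion_product}), and using \cite[Theorem 8.2]{HM} to put $g$ and its twisted theory in the form of a standard lift of a lattice isometry together with \cite[Section 3.2]{Lam23} for the unitarity of the associated irreducible $g$-twisted modules, Theorem~\ref{theo:complete_unitarity_holo_orbifold} yields that $V_\parzero$ is completely unitary. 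Since the Heisenberg algebra $H$ is a unitary subalgebra of $V_\parzero$, its coset $W=\Com_{V_\parzero}(H)$ is a unitary subalgebra (Remark~\ref{rem:coset_subalgebras}), hence a unitary VOA.

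To see that every $W$-module is unitarizable, note that $\Rep(W)$ is pointed, so every irreducible $W$-module is a simple current; using \cite[Theorem 8.2]{HM} and the explicit glueing description of \cite[Section 6]{HM}, each such simple current occurs — paired with an irreducible $V_K$-module — in the decomposition over the full subalgebra $W\otimes V_K$ of one of the ambient VO(S)As $U$, $U^{\mathrm{orb}(g)}$, $V_\parzero$, $V$, all of which are unitary. Restricting the ambient unitary structure to that submodule and then to its $W$-factor exhibits the simple current as unitarizable. Hence $W$ is strongly unitary, and so completely unitary by Corollary~\ref{cor:complete_unitarity_pointed_mtc}; therefore $W\otimes V_K$ is completely unitary. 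Finally $W\otimes V_K$ is a full vertex subalgebra of $V_\parzero$, hence of $V$ (which is simple and of CFT type, being nice), so $V$ is a simple CFT type VOSA extension of the completely unitary VOA $W\otimes V_K$, and Theorem~\ref{theo:unitarity_vosa_extensions} gives that $V$ is a unitary VOSA and that $V_\parzero$ is a completely unitary VOA.

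The main obstacle is the strong unitarity of $W$. Its unitarity is delicate because it forces control of the unitarity of the irreducible $g$-twisted modules of the holomorphic VOA $U$, which is not a lattice VOA in general, so this genuinely needs the structural input of \cite[Theorem 8.2]{HM} feeding into \cite{Lam23} and Theorem~\ref{theo:complete_unitarity_holo_orbifold}; and the unitarizability of all $W$-modules requires a careful bookkeeping of how the simple currents of $\mathcal{C}(A_W)$ are glued into the holomorphic VO(S)As produced by the Neighbourhood Graph Method. Once $W$ is known to be completely unitary, everything else is a formal application of the extension results already recorded in Section~\ref{subsec:preliminaries_unitary_VOSAs}.
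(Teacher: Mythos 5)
Your global strategy coincides with the paper's: reduce everything to the complete unitarity of $W\otimes V_K$, dispose of $V_K$ as a lattice VOA, obtain $W$ by proving it strongly unitary and then invoking Corollary \ref{cor:complete_unitarity_pointed_mtc} for its pointed module category, and pass to $V_\parzero$ and $V$ via Theorem \ref{theo:unitarity_vosa_extensions}. The gap is in the two steps you yourself flag as the crux. To make $W$ unitary you first want $V_\parzero=U^{\langle g\rangle}$ completely unitary via Theorem \ref{theo:complete_unitarity_holo_orbifold}, but that theorem needs all irreducible $g$-twisted $U$-modules to be unitarizable, and for a general Schellekens VOA $U$ with a general order-two automorphism $g$ this is not available: the results of \cite[Section 3.2]{Lam23} only cover twisted modules of \emph{lattice} VOAs under \emph{standard lifts}, and \cite[Theorem 8.2]{HM} does not ``put $g$ and its twisted theory in the form of a standard lift'' --- it says nothing about $U$ or $g$; what it actually provides is a realization of the Heisenberg coset $W$ itself as the cyclic orbifold $V_{\Lambda_\mu}^{\langle\hat{\mu}\rangle}$ of the lattice VOA attached to the coinvariant lattice of the Leech lattice. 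Accordingly, the paper never proves unitarity of $V_\parzero$ first (in its logic that is a \emph{consequence} of the complete unitarity of $W\otimes V_K$): it proves that $V_\Lambda^{\langle\phi_\mu\rangle}$ is completely unitary (here Lam's twisted-module unitarity does apply, since $\phi_\mu$ is a standard lift acting on the Leech lattice VOA), exhibits $W\cong V_{\Lambda_\mu}^{\langle\hat{\mu}\rangle}$ as a unitary coset subalgebra of $V_\Lambda^{\langle\phi_\mu\rangle}$, and then deduces strong unitarity of $W$ from \cite[Theorem 2]{KM15}, which guarantees that \emph{every} irreducible $W$-module occurs in some irreducible $V_\Lambda^{\langle\phi_\mu\rangle}$-module.

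Your substitute for that last step --- that every simple current of $W$ occurs, paired with a $V_K$-module, inside one of $U$, $U^{\mathrm{orb}(g)}$, $V_\parzero$, $V$ --- is also not justified: the glueing codes of \cite[Section 6]{HM} are isotropic subgroups of $A_W\times A_K$ whose projection to $A_W$ need not be surjective outside glueing type I, and for type III the coset $W$ does not even form a dual pair with a lattice VOA in any of the ambient holomorphic VOAs (this is exactly the obstruction the paper runs into later for strong graded locality, yet the unitarity theorem must cover type III as well). So the ``bookkeeping'' you defer is precisely where the argument would break. The repair is the paper's: bypass $U$ and $g$ entirely, work inside the Leech lattice orbifold $V_\Lambda^{\langle\phi_\mu\rangle}$, and let the Krauel--Miyamoto restriction theorem deliver the strong unitarity of $W$ uniformly in the glueing type.
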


\begin{proof}
	
	By \cite[Theorem 8.2]{HM}, for every $V$, there exists an automorphism $\mu\in \mathrm{O}(\Lambda)\cong \mathrm{Co}_0$ (the largest of the Conway groups) of the Leech Lattice $\Lambda$ such that $W$ is isomorphic to the fixed-point subalgebra $V_{\Lambda_\mu}^{\langle\hat{\mu}\rangle}$, where $\Lambda_\mu$ is the coinvariant lattice of the fixed-point sublattice $\Lambda^\mu$, that is the orthogonal complement of $\Lambda^\mu$ in $\Lambda$, and $\hat{\mu}$ is a \textit{standard lift} of the isometry $\mu$ restricted to $\Lambda_\mu$ to $\Aut(V_{\Lambda_\mu})$. 
	We point out, see the beginning of \cite[Section 5.2]{Lam20}, that $V_{\Lambda_\mu}^{\langle\hat{\mu}\rangle}$ is a vertex subalgebra of the fixed-point subalgebra $V_\Lambda^{\langle\phi_\mu\rangle}$, where $\phi_\mu$ is a standard lift of the isometry $\mu$ of $\Lambda$ to $\Aut(V_\Lambda)$ and $\phi_\mu|_{\Lambda_\mu}=\hat{\mu}$. Moreover, $V_{\Lambda_\mu}^{\langle\hat{\mu}\rangle}$ and $V_{\Lambda^\mu}$ form a dual pair in $V_\Lambda^{\langle\phi_\mu\rangle}$. Recall that lattice VOAs are completely unitary, see Remark \ref{rem:lattice_vosas}.
	
	First, we prove that $V_\Lambda^{\langle\phi_\mu\rangle}$ is completely unitary and that $V_{\Lambda_\mu}^{\langle\hat{\mu}\rangle}$ is strongly unitary. 
	By \cite[Lemma 3.3]{Lam23}, $\phi_\mu$ is a unitary automorphism of $V_\Lambda$, cf.\ also \cite[Proposition A.5]{CGGH23} together with \cite{DN99}. 
	It follows that $V_\Lambda^{\langle\phi_\mu\rangle}$ is a unitary subalgebra of $V_\Lambda$, see Remark \ref{rem:fixed-point_subalgebras}, cf.\ \cite[Example 5.25]{CKLW18}. 
	In \cite[Section 3.2]{Lam23}, the author proves that the twisted modules of even lattice VOAs for any standard lift are unitary. Therefore, Theorem \ref{theo:complete_unitarity_holo_orbifold}, see also \cite[Remark 10.10]{Gui24}, says that $V_\Lambda^{\langle\phi_\mu\rangle}$ is completely unitary. 
	As $\Lambda^\mu$ is a sublattice of $\Lambda$, $V_{\Lambda^\mu}$ is a unitary subalgebra of $V_\Lambda$ and thus of $V_\Lambda^{\langle\phi_\mu\rangle}$ too, see Lemma \ref{lem:unitary_subalgebra_lattice_vosas}. 
	Then $V_{\Lambda_\mu}^{\langle\hat{\mu}\rangle}$ is also a unitary subalgebra of $V_\Lambda^{\langle\phi_\mu\rangle}$, as it is the coset subalgebra of $V_{\Lambda^\mu}$ in $V_\Lambda^{\langle\phi_\mu\rangle}$, see Remark \ref{rem:coset_subalgebras}, cf.\ \cite[Example 5.27]{CKLW18}.
	Recall that $W\cong V_{\Lambda_\mu}^{\langle\hat{\mu}\rangle}$ is strongly rational by \cite[Section 4.3]{CKLR19} and thus we are in charge to apply \cite[Theorem 2]{KM15} which says that all irreducible $V_{\Lambda_\mu}^{\langle\hat{\mu}\rangle}$-modules appear in some irreducible $V_\Lambda^{\langle\phi_\mu\rangle}$-module. It follows from the complete unitarity of the latter that $V_{\Lambda_\mu}^{\langle\hat{\mu}\rangle}$ is strongly unitary, that is every $V_{\Lambda_\mu}^{\langle\hat{\mu}\rangle}$-module is unitarizable, cf.\ the beginning of the proof of \cite[Theorem 2.6.5]{Gui}. This implies that $W$ is strongly unitary too.
	
	Now, recall that $W$ is strongly rational and that the irreducible modules in the modular tensor category $\Rep(W)$ are all simple currents, that is $\Rep(W)$ is pointed, see also the beginning of \cite[Section 6.1]{HM}. By Corollary \ref{cor:complete_unitarity_pointed_mtc}, it follows that $W$ is completely unitary, that is $\Repu(W)$ is a unitary modular tensor category. $V_K$ is also completely unitary, so that $W\otimes V_K$ is completely unitary too, see Remark \ref{rem:graded_tensor_product}. Therefore $V_\parzero$ is a completely unitary VOA and $V$ is a unitary VOSA by Theorem \ref{theo:unitarity_vosa_extensions}.
\end{proof}

\begin{cor}  \label{cor:unitarity_holo_c<24}
	Let $V$ be a nice holomorphic VOSA with central charge $c<24$. Suppose that $V$ is not a VOSA with central charge $c=\frac{47}{2}$ and with $V_\half=\{0\}=V_1$, unless it is isomorphic to $VB^\natural$. Then $V$ is unitary.
\end{cor}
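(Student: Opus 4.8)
The plan is to reduce to the central-charge-$24$ case via the Free-Fermion Splitting. By Proposition \ref{prop:free-fermion-splitting}, $V$ is isomorphic to a graded tensor product $\bar{V}\hat{\otimes}F^l$, where the stump $\bar{V}$ is a nice holomorphic VOSA with $\bar{V}_\half=\{0\}$ and both $\bar{V}$ and $F^l$ are nice and holomorphic. Since $F$ is a unitary VOSA and the graded tensor product of unitary VOSAs is unitary (Remark \ref{rem:graded_tensor_product}), it suffices to prove that $\bar{V}$ is unitary. Write $\bar c\in\half\Zpluseq$ for its central charge, so that $\bar c=c-\frac{l}{2}\leq c<24$.

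Next I would enlarge $\bar{V}$ back up to central charge $24$: set $l':=2(24-\bar c)\in\Zplus$ and put $\tilde{V}:=\bar{V}\hat{\otimes}F^{l'}$. By Proposition \ref{prop:free-fermion-splitting} again, $\tilde{V}$ is a nice holomorphic VOSA with central charge $24$ and $\dim\tilde{V}_\half=l'\geq 1$; in particular $\tilde{V}_\parone\neq\{0\}$ since $(F^{l'})_\parone\neq\{0\}$. Moreover, since $\bar{V}$ is of CFT type and $\bar{V}_\half=\{0\}$, computing the weight-one subspace of the graded tensor product gives $\tilde{V}_1\cong\bar{V}_1\oplus(F^{l'})_1$ with $\dim(F^{l'})_1=\binom{l'}{2}$. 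Hence, if $l'\geq 2$ then $\tilde{V}_1\neq\{0\}$, so Theorem \ref{theo:unitarity_holo_VOSAs_cc_24} applies and yields that $\tilde{V}$ is unitary; as $\tilde{V}$ is simple, the unitary subalgebra $\bar{V}$ of $\tilde{V}$ (Remark \ref{rem:graded_tensor_product}) is then a unitary VOSA, and we are done. The same argument works when $l'=1$ and $\bar{V}_1\neq\{0\}$, since again $\tilde{V}_1\neq\{0\}$. This already covers the case in which the stump is a VOA ($\bar{V}_\parone=\{0\}$), because then $\bar c$ is a non-negative multiple of $8$ below $24$, i.e.\ $\bar c\in\{0,8,16\}$ and $l'\in\{48,32,16\}\geq 2$ — alternatively one could just quote Remark \ref{rem:holo_voas_cc_24} there.

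The remaining, and only delicate, case is $l'=1$ with $\bar{V}_1=\{0\}$, i.e.\ $\bar c=\frac{47}{2}$ and $\bar{V}_1=\{0\}$ (recall $\bar{V}_\half=\{0\}$). Here Theorem \ref{theo:unitarity_holo_VOSAs_cc_24} is no longer available, since the associated $c=24$ VOSA $\tilde{V}=\bar{V}\hat{\otimes}F$ has trivial weight-one subspace. However, $c=\bar c+\frac{l}{2}=\frac{47+l}{2}<24$ forces $l=0$, so $V=\bar{V}$ is itself a nice holomorphic VOSA with central charge $\frac{47}{2}$ and $V_\half=\{0\}=V_1$; by the standing hypothesis of the Corollary, $V\cong VB^{\natural}$, which is unitary by \cite[Theorem 7.4]{CGH}. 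This disposes of the last case. I expect this final configuration — where the only way to climb back to $c=24$ produces a VOSA with vanishing weight-one space, so that Theorem \ref{theo:unitarity_holo_VOSAs_cc_24} fails and one must instead identify $V$ with the shorter moonshine VOSA through the hypothesis — to be the one point needing care; the rest is a routine bookkeeping of conformal weights in graded tensor products of free fermions.
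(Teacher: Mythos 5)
Your proof is correct and follows essentially the same route as the paper: climb to central charge $24$ by tensoring with free fermions, apply Theorem \ref{theo:unitarity_holo_VOSAs_cc_24} when the resulting weight-one space is non-trivial, and observe that the only obstructed configuration is $c=\frac{47}{2}$ with $V_\half=\{0\}=V_1$, which the hypothesis identifies with $VB^\natural$. The only (harmless) cosmetic differences are that the paper tensors $V$ itself with $F^l$ rather than first passing to the stump $\bar V$, and it recovers the unitarity of $V$ as the coset of $F^l$ in $U$ (Remark \ref{rem:coset_subalgebras}) rather than as a tensor factor.
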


\begin{proof}
	By Proposition \ref{prop:free-fermion-splitting}, there exists $l\in \Zplus$ such that $V\hat{\otimes} F^l$ is a nice holomorphic VOSA with central charge $c+\frac{l}{2}=24$, say $U$, such that $U_\half\not=\{0\}\not=U_\parone$. 
	If $U_1=\{0\}$, then $l=1$ and $V_\half=\{0\}=V_1$, so that $V\cong VB^\natural$ and $U\cong VB^\natural\hat{\otimes}F$. $VB^\natural$ is already known to be unitary thanks to \cite[Theorem 7.24]{CGH}. Then we can suppose that $U_1\not=\{0\}$, so that $U$ is unitary by Theorem \ref{theo:unitarity_holo_VOSAs_cc_24}.
	Trivially $U_\half$ is PCT-invariant, so that $F^l$ is PCT-invariant too. Therefore $F^l$ is a unitary subalgebra of $U$. The coset subalgebra of $F^l$ in $U$ is $V$ itself. Then $V$ is a unitary subalgebra of $U$ and thus it has a structure of unitary VOSA, see Remark \ref{rem:coset_subalgebras}. 
\end{proof}

\subsection{Strong graded locality}
\label{subsec:gl_conformal_nets}

We use the notation introduced in Section \ref{subsec:unitarity} and we refer to Section \ref{sec:preliminaries} for the necessary preliminaries. 
Recall that the free fermion VOSAs $F^l$ for all $l\in\Zplus$ are strongly graded-local by \cite[Example 7.2]{CGH}. The corresponding \textit{free fermions nets} are denoted by $\F^l$ respectively. Of course, the \textit{real free fermion net} $\F^1$ is simply denoted by $\F$. Moreover, it is well-known that $F_\parzero$ is isomorphic to the Virasoro VOA $L(\half,0)$ and $F_\parone$ to the $L(\half,0)$-module $L(\half,\half)$. Accordingly, the Bose part $\F_\parzero$ of $\F$ is the \textit{Virasoro net} $\Vir_\half:=\A_{L(\half,0)}$, see Remark \ref{rem:examples_strong_braiding_complete_energy_boundedness}.

\begin{prop} \label{prop:holo_vosas_strong_braiding}
Let $V$ be a nice holomorphic VOSA with central charge $c=24$ and with $V_\parone\not=\{0\}$.
Assume one of the following further hypotheses.
\begin{itemize}
	\item[(i)] Suppose that $V$ is of glueing type I or II. Then consider the dual pair $(W,V_K)$ of $V_\parzero$ and set $E:=W$ and $X:=V_K$.
	
	\item[(ii)] Suppose that $V$ has $V_\half\not=\{0\}\not=V_1$ or that $V$ is isomorphic to $VB^\natural\hat{\otimes}F$.
	Then consider the Free-Fermion Splitting $\bar{V}\hat{\otimes} F^l$ of $V$ for some $l\in\Zplus$ and set $E:=(\bar{V}\hat{\otimes}F^{l-1})_\parzero$ and $X:=F_\parzero$, identified with the Virasoro VOA $L(\half,0)$. 
\end{itemize}
Then $E$ and the tensor product $E\otimes X$ are completely unitary, strongly local, completely energy-bounded and every unitary intertwining operator satisfies the strong intertwining property and the strong braiding. In particular, the strong integrability $^*$-functor $\mathfrak{F}:\Repu(E\otimes X)\to \Repf(\A_E\otimes \A_X)$ realizes an equivalence of unitary modular tensor categories, so that $\A_E\otimes \A_X$ is completely rational. 
Furthermore, $V$ is energy-bounded and there exists a holomorphic graded-local conformal net extension $\A_{E\otimes X}^V$ of $\A_E\otimes \A_X$ with non-trivial Fermi part and whose Bose subnet is the irreducible conformal net $\A_{V_\parzero}$.
If $\B$ is any irreducible graded-local conformal net with non-trivial Fermi part and whose Bose subnet is isomorphic to $\A_{V_\parzero}$, then $\B$ is isomorphic to $\A_{E\otimes X}^V$.
\end{prop}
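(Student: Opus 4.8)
The plan is to derive every assertion by feeding $E$ into the coset part of Theorem~\ref{theo:strong_braiding_complete_energy_boundedness} and then feeding $V$ into Lemma~\ref{lem:strong_graded_locality_simple_current_ext}. The structural input I would use is that, in each case, $E$ is isomorphic to the commutant $\Com_{\widetilde U}(X')$ of a unitary subalgebra $X'$ inside a strongly rational holomorphic VOA $\widetilde U$ with central charge $24$, where $X'$ is a lattice VOA, a unitary discrete-series Virasoro VOA $L(\half,0)$, or a tensor product of such: in case~(i) this is the Heisenberg--lattice coset realisation of $W$ inside the VOA $U$ (or $U^{\mathrm{orb}(g)}$) of Proposition~\ref{prop:neighbourhood_graph_method}, along the lines of the proof of Theorem~\ref{theo:unitarity_holo_VOSAs_cc_24}; in case~(ii) it comes from the free-fermion splitting together with the glueing-type~III analysis of \cite{HM}, which for $V\cong VB^\natural\hat\otimes F$ reduces to the classical realisation of the even shorter moonshine VOA as $\Com_{V^\natural}(L(\half,0))$. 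Granting this, $\widetilde U$ is strongly local by \cite[Theorem 5.5]{CGGH23} and, being holomorphic, has trivial unitary representation category; hence $\widetilde U$ vacuously satisfies the hypotheses of Theorem~\ref{theo:strong_braiding_complete_energy_boundedness} and is completely energy-bounded (the only unitary intertwining operators up to scalars being $Y_{\widetilde U}$, which is energy-bounded because $\widetilde U$ is strongly local). Since $E$ is strongly rational---by \cite[Section 4.3]{CKLR19} in case~(i), and as the even part of a nice VOSA in case~(ii) (Remark~\ref{rem:niceness_even_odd_parts})---Theorem~\ref{theo:strong_braiding_complete_energy_boundedness}(iv) then shows that $E$ itself satisfies the hypotheses of that theorem, is completely energy-bounded, and has all its unitary intertwining operators satisfying the strong intertwining property and the strong braiding; by part~(i), $E$ is completely unitary (and in particular strongly local). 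As $X$ (a lattice VOA in case~(i), $L(\half,0)$ in case~(ii)) enjoys the same properties and is completely energy-bounded by Remark~\ref{rem:examples_strong_braiding_complete_energy_boundedness}, part~(v) transfers all of them to $E\otimes X$.

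For the assertion on $\mathfrak{F}$: by Theorem~\ref{theo:strong_braiding_complete_energy_boundedness}(i)--(ii), $\Repu(E\otimes X)$ is a unitary modular tensor category and $\mathfrak{F}$ identifies it with a full braided $C^*$-tensor subcategory of $\Rep(\A_{E\otimes X})=\Rep(\A_E\otimes\A_X)$, using $\A_{E\otimes X}\cong\A_E\otimes\A_X$ from Remark~\ref{rem:properties_strongly_graded-local_VOSAs}. On the net side, $\A_E$ is the coset subnet $\A_{X'}^{c}\subseteq\A_{\widetilde U}$ (Remark~\ref{rem:properties_strongly_graded-local_VOSAs}) of the completely rational net $\A_{\widetilde U}$ by the completely rational subnet $\A_{X'}$, hence $\A_E$---and therefore $\A_E\otimes\A_X$---is completely rational, so $\Repf(\A_E\otimes\A_X)$ is again a unitary modular tensor category. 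Since $E\otimes X$ is completely energy-bounded, comparing the $\mu$-index of $\A_E\otimes\A_X$ with the global dimension of $\Repu(E\otimes X)$ (as in \cite[Section 5]{CGGH23}, cf.\ \cite[Section 8]{CKLW18}) forces $\mathfrak{F}$ to be essentially surjective onto $\Repf(\A_E\otimes\A_X)$, giving the stated equivalence of unitary modular tensor categories.

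Next I would pass to $V$. In both cases $V$ is a $G$-graded simple current extension of $E\otimes X$ for a finite abelian group $G$: in case~(i) because $\Rep(W\otimes V_K)$ is pointed, and in case~(ii) because the four summands of $V$ over $E\otimes X=(\bar V\hat\otimes F^{l-1})_\parzero\otimes F_\parzero$ are tensor products of $\Z_2$-simple currents (Remark~\ref{rem:niceness_even_odd_parts}); since $E\otimes X$ is nice (Remark~\ref{rem:graded_tensor_product}, Remark~\ref{rem:niceness_even_odd_parts}), Lemma~\ref{lem:simple_current_extensions} applies. As $E\otimes X$ is completely energy-bounded, each $Y_V(a,z)$ with $a\in V^\xi$ restricts on every $V^\eta$ to an irreducible energy-bounded intertwining operator of $E\otimes X$, so $V$ is energy-bounded; moreover each $V^\xi$ contains a non-zero quasi-primary vector (a lowest-weight vector), and all unitary intertwining operators of $E\otimes X$ satisfy the strong intertwining property and the strong braiding by the first paragraph. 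Hence the hypotheses of Lemma~\ref{lem:strong_graded_locality_simple_current_ext} hold, $V$ is strongly graded-local, and we set $\A^V_{E\otimes X}:=\A_V$. Then $\A_E\otimes\A_X=\A_{E\otimes X}$ is a covariant subnet of $\A_V$ (as $E\otimes X$ is a full unitary subalgebra of $V$), the Bose subnet of $\A_V$ is $\A_{V_\parzero}$, the Fermi part is non-trivial because $V_\parone\neq\{0\}$ (Remark~\ref{rem:properties_strongly_graded-local_VOSAs}), and $\A_V$ is holomorphic: by Proposition~\ref{prop:neighbourhood_graph_method} we may write $\A_{V_\parzero}\cong\A_U^{\langle g\rangle}$ as a $\Z_2$-orbifold of the holomorphic net $\A_U$, which is completely rational with fusion ring $\C[\Z_2\times\Z_2]$, and $\A_V$ is its $\Z_2$-graded extension by the fermionic sector of $V_\parone$, leaving a single Neveu--Schwarz sector.

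Finally, let $\B$ be an irreducible graded-local conformal net with non-trivial Fermi part and $\B_\parzero\cong\A_{V_\parzero}$. Then $\B$ is a $\Z_2$-graded Fermi extension of $\A_{V_\parzero}$, so $\B_\parone$ is a simple current of $\Rep(\A_{V_\parzero})$ of twist $-1$; but by Proposition~\ref{prop:neighbourhood_graph_method} exactly one of the three non-trivial simple currents---the class of $V_\parone$, of half-integer conformal weight---has twist $-1$, the other two having positive integer conformal weight and hence twist $+1$. Thus $\B$ is the extension of $\A_{V_\parzero}$ by the sector of $V_\parone$, which by uniqueness of the associated Q-system equals $\A_V=\A^V_{E\otimes X}$. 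I expect the decisive obstacle to be the first step: uniformly realising $E$ as a commutant $\Com_{\widetilde U}(X')$ of a lattice- or discrete-series-Virasoro-type VOA inside a strongly rational holomorphic VOA with $c=24$, which rests on the structural classification of \cite{HM} across all glueing types together with the exceptional identifications in the type~III cases; once this is granted, the remaining assertions follow from the machinery recalled in Sections~\ref{subsec:preliminaries_unitary_VOSAs} and~\ref{subsec:preliminaries_VOSAs_gl_conformal_nets}.
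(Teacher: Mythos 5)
Your overall strategy coincides with the paper's: realize $E$ as the commutant of a lattice or discrete-series Virasoro subalgebra inside a strongly local holomorphic VOA of central charge $24$, push complete energy boundedness, the strong intertwining property and the strong braiding down to $E$ and then to $E\otimes X$ via parts (iv) and (v) of Theorem \ref{theo:strong_braiding_complete_energy_boundedness}, obtain the categorical equivalence by an index/dimension comparison against the holomorphic ambient (the paper packages this as an application of \cite[Theorem 2.7.2]{Gui} to $E\otimes X\subset V_\parzero\subset U$), and treat $V$ as a $G$-graded simple current extension. The one organizational difference is that you prove strong graded locality of $V$ already here via Lemma \ref{lem:strong_graded_locality_simple_current_ext} and set $\A_{E\otimes X}^V:=\A_V$, whereas the paper constructs $\A_{E\otimes X}^V$ purely categorically (via \cite[Theorems 4.10 and 4.21]{CGGH23}) without yet knowing strong graded locality, deferring that to Theorem \ref{theo:strong_graded_locality_holo}. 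Your route is legitimate and simply front-loads the content of that theorem; the hypotheses of Lemma \ref{lem:strong_graded_locality_simple_current_ext} are indeed available at that point.

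The genuine gap is the one you flag yourself: the coset realization of $E$ in case (ii). In case (i) the realization is supplied directly by \cite[Section 6.1]{HM} ($W$ and an even lattice VOA $V_L$ form a dual pair in one of the holomorphic $c=24$ VOAs of Proposition \ref{prop:neighbourhood_graph_method}; note that the realization used in the proof of Theorem \ref{theo:unitarity_holo_VOSAs_cc_24} sits inside $V_\Lambda^{\langle\phi_\mu\rangle}$, which is \emph{not} holomorphic, so it would not feed into \cite[Theorem 5.5]{CGGH23}). In case (ii), however, the identification $E=T_\parzero=\Com_U(L(\half,0))$ with $T:=\bar V\hat\otimes F^{l-1}$ is not in \cite{HM} and must be proved: the paper does this by noting that $\Rep(T_\parzero)$ has Ising-type fusion with conformal weights $0$, $\half$, $\frac{15}{16}$ modulo $\Z$, that $T_\parzero\otimes L(\half,0)\subset V_\parzero\subset U$ by Proposition \ref{prop:neighbourhood_graph_method}, and that $\Com_U(L(\half,0))$ is a simple CFT-type VOA extension of $T_\parzero$, so the weight constraints force it to equal $T_\parzero$. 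Without this argument your first step is an assumption rather than a proof. Two smaller points: your holomorphicity claim (``leaving a single Neveu--Schwarz sector'') compresses the statistics-phase argument of \cite[Propositions 22 and 25]{CKL08} that the paper spells out — the twist data $(+1,+1,-1)$ you list is exactly the needed input, but one must still invoke the splitting of an arbitrary Neveu--Schwarz sector into a $\sigma$-Bose pair to conclude; and deducing complete rationality of $\A_E$ as a coset subnet of $\A_{\widetilde U}$ requires a finite-index (cofiniteness) verification that the paper avoids by extracting complete rationality from the counting argument itself.
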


\begin{proof}
\textit{The case (i)}.
As showed in \cite[Section 6.1]{HM} and recalled in Section \ref{subsec:unitarity}, if $V$ is of glueing type I or II, then $W$ forms a dual pair with an even lattice VOA $V_L$, for a given lattice $L$, in a strongly rational holomorphic VOA $U$ with central charge $c=24$, whose strong locality is proved in \cite[Theorem 5.5]{CGGH23}.
Trivially, $U$ satisfies the hypotheses of Theorem \ref{theo:strong_braiding_complete_energy_boundedness} and it is also completely energy-bounded. The same holds for lattice VOAs, see Remark \ref{rem:examples_strong_braiding_complete_energy_boundedness}. Recall that $W$ is strongly rational by \cite[Section 4.3]{CKLR19} and it is also completely unitary by Theorem \ref{theo:unitarity_holo_VOSAs_cc_24}. 
It follows from Remark \ref{rem:graded_tensor_product} that $W\otimes V_L$ is completely unitary too and thus it can be made a unitary subalgebra of $U$ by Theorem \ref{theo:unitarity_vosa_extensions}.
Then also $W$ and $V_L$ are unitary subalgebras of $U$. 
By (iv) of Theorem \ref{theo:strong_braiding_complete_energy_boundedness}, $W=\Com_U(V_L)$ is completely energy-bounded and its unitary intertwining operators satisfy the strong intertwining property and the strong braiding. By (v) of Theorem \ref{theo:strong_braiding_complete_energy_boundedness}, the same holds for $W\otimes V_K$ (of course, also $W\otimes V_L$ does so). Note that $W$ and $W\otimes V_K$ are also strongly local and that $\A_W\otimes \A_{V_K}=\A_{W\otimes V_K}$, see Remark \ref{rem:properties_strongly_graded-local_VOSAs}.

By (ii) of Theorem \ref{theo:strong_braiding_complete_energy_boundedness}, the strong integrability $^*$-functor $\mathfrak{F}:\Repu(W\otimes V_K)\to \Rep(\A_W\otimes \A_{V_K})$ is fully faithful and such that $\Repu(W\otimes V_K)$ is equivalent to a braided $C^*$-tensor subcategory of $\Rep(\A_W\otimes \A_{V_K})$.
With the help of \cite[Corollary 4.8]{CGGH23}, we choose the unitary structures in such a way that $W\otimes V_K\subset V_\parzero\subset U$ are inclusions of unitary subalgebras.
Therefore, we can apply \cite[Theorem 2.7.2]{Gui} to the inclusion $W\otimes V_K\subset U$, to conclude that $\Repu(W\otimes V_K)$ and $\Repf(\A_W\otimes \A_{V_K})$ have the same number of irreducibles, as $U$ is holomorphic and $\A_U$ is too by \cite[Theorem 5.5]{CGGH23}. Then $\A_W\otimes \A_{V_K}$ is completely rational and $\mathfrak{F}$ realizes an equivalence of unitary modular tensor categories between $\Repu(W\otimes V_K)$ and $\Repf(\A_W\otimes \A_{V_K})$. 

Now, $\A_{W\otimes V_K}^V$ is realized by two steps.
First, we apply \cite[Theorem 4.10]{CGGH23}, which works as follows.
Consider the simple CFT type VOA extension $W\otimes V_K\subset V_\parzero$ in $\Repu(W\otimes V_K)$, which is equivalent to $\Repf(\A_W\otimes \A_{V_K})$. Then this equivalence induces an irreducible conformal net extension $\A_{W\otimes V_K}^{V_\parzero}$ of $\A_{W\otimes V_K}$.
On the other hand, the irreducible conformal net extension $\A_{W\otimes V_K}\subset \A_{V_\parzero}$ induces a simple CFT type VOA extension $W\otimes V_K\subset \widetilde{V}$ via the strong integrability $^*$-functor $\mathfrak{F}$, which implies that $\widetilde{V}$ must be equivalent to $V_\parzero$ as $W\otimes V_K$-module. In particular, $\widetilde{V}$ is a simple current extension of $W\otimes V_K$ and thus it must be actually isomorphic to $V_\parzero$ as VOAs by the uniqueness of the simple current extension structure, see \cite[Proposition 5.3]{DM04}.
This implies that $\A_{W\otimes V_K}^{V_\parzero}$ is isomorphic to $\A_{V_\parzero}$ and that $\Repu(V_\parzero)$ and $\Repf(\A_{V_\parzero})$ are equivalent as unitary modular tensor category. By \cite[Theorem 24]{Lon03}, $\A_{V_\parzero}$ is also completely rational.
Second, we apply \cite[Theorem 4.21]{CGGH23}, so that $\A_{W\otimes V_K}^V$ is obtained as irreducible graded-local conformal net extension of $\A_{V_\parzero}$, induced by the simple CFT type VOSA extension $V$ of $V_\parzero$.

To prove that $\A_{W\otimes V_K}^V$ is holomorphic, note that it is completely rational as its Bose part $\A_{V_\parzero}$ is. By Proposition \ref{prop:neighbourhood_graph_method} and the equivalence of $\Repu(V_\parzero)$ and $\Repf(\A_{V_\parzero})$, we have that $\A_{V_\parzero}$ counts four irreducible representations. Of course, one of it is the vacuum representation, whereas the other three representations are $\Z_2$-simple currents with trivial twist, apart from the one giving rise to the extension $\A_{W\otimes V_K}^V$, say $\sigma$, whose twist is $-\one_\sigma$. 
By \cite[Proposition 22]{CKL08}, if $\pi$ is any irreducible (Neveu-Schawrz) representation of $\A_{W\otimes V_K}^V$, then it splits into two inequivalent irreducible representations of $\A_{V_\parzero}$, say $\pi_+$ and $\pi_-:=\pi_+\sigma$.
Moreover, $\pi_+$ and $\pi_-$ are both \textit{$\sigma$-Bose} in the meaning of \cite[Section 2.5]{CKL08} thanks to \cite[Corollary 25]{CKL08}.
If the \textit{statistics phase} $\omega_{\pi_+}$, which is the complex number obtained from the twist of $\pi_+$ in $\Repf(\A_{V_\parzero})$, is equal to $-1$, then $\pi_+=\sigma$ and $\pi_-$ must be the vacuum representation of $\A_{V_\parzero}$.
If $\omega_{\pi_+}=1$, then $\omega_{\pi_-}=-\omega_{\pi_+}=-1$ according to \cite[Eq.\ (6)]{CKL08}, so that $\pi_-=\sigma$ and $\pi_+$ is the vacuum representation of $\A_{V_\parzero}$. By \cite[Proposition 2.5]{CHKLX15}, $\pi$ is the vacuum representation of $\A_{W\otimes V_K}^V$, that is $\A_{W\otimes V_K}^V$ is holomorphic. 

For the uniqueness statement, let $\B$ be any irreducible graded-local conformal net with non-trivial Fermi part and whose Bose subnet is isomorphic to $\A_{V_\parzero}$.
Again by \cite[Theorem 4.21]{CGGH23} and its proof, this corresponds to a $\Z_2$-simple current extension $V_\parzero\subset \widehat{V}$, where the odd part $\widehat{V}_\parone$ is the $\Z_2$-simple current in the representation category of $V_\parzero$. By Proposition \ref{prop:neighbourhood_graph_method}, $V_\parzero$ has a unique $\Z_2$-simple current with semi-integer conformal weight and thus $\widehat{V}_\parone$ must be isomorphic to $V_\parone$ as $\Z_2$-simple current of $V_\parzero$.
Moreover, $\widehat{V}\cong V$ as the VOSA structure of a simple current extension is unique up to isomorphism, see \cite[Theorem 3.9]{CKL20} and \cite[Proposition 5.3]{DM04}. Then the uniqueness statement follows from the construction of $\A_{W\otimes V_K}^V$.

Finally, the energy boundedness of $V$ is a consequence of the complete energy boundedness of $W\otimes V_K$. Indeed, as $V$ is a $W\otimes V_K$-module, every vertex operator $Y(a,z)$ with $a\in V$ can be considered as a finite sum of unitary intertwining operators of $W\otimes V_K$, so that the energy bounds for $Y(a,z)$ follow, cf.\ \cite[Theorem 4.6]{CT23}.

\textit{The case (ii)}.
Suppose that $V\not\cong VB^\natural\hat{\otimes}F$, so that $\bar{V}\not\cong VB^\natural$. 
We safely identify $V$ with its Free-Fermion Splitting $\bar{V}\hat{\otimes}F^l$ for some $l\in\Zplus$, see Proposition \ref{prop:free-fermion-splitting}.  
Set $T:=\bar{V}\hat{\otimes}F^{l-1}$, which is nice, holomorphic and with central charge equal to $\frac{47}{2}$.
By Remark \ref{rem:niceness_even_odd_parts}, $T_\parzero$ is nice too.
Moreover, by \cite[Proposition 3.2 and Table 1]{HM}, the fusion algebra of $\Rep(T_\parzero)$ is of Ising type with only three elements $\id$, $\sigma$ and $\tau$, whose corresponding conformal weights modulo $\Z$ are $0$, $\half$ and $\frac{15}{16}$. Note also that the simple CFT type VOSA extensions $T_\parzero\otimes L(\half,0)\subset (T\hat{\otimes} F)_\parzero$ and $T_\parzero\otimes L(\half,0)\subset V$ are made by simple currents only. Indeed, the decomposition of $V$ into irreducible $(T_\parzero\otimes L(\half,0))$-modules is
\begin{equation}  \label{eq:simple_currents_dual_pair_type_III_fermion}
	V= \left(T_\parzero\otimes L(\half,0)\right) \oplus
	   \left(T_\parzero\otimes L(\half,\half) \right) \oplus
	   \left(T_\parone\otimes L(\half,0)\right) \oplus
	   \left(T_\parone\otimes L(\half,\half)\right) \,.
\end{equation}

By Proposition \ref{prop:neighbourhood_graph_method}, we have inclusions of vertex subalgebras $T_\parzero\otimes L(\half,0)\subset (T\hat{\otimes} F)_\parzero\subset U$ for some strongly rational holomorphic VOA $U$ with central charge $c=24$ and $U_1\not=\{0\}$. Note that $\Com_U(L(\half,0))$ must be a simple CFT type VOA extension of $T_\parzero$ and thus we can consider its decomposition into irreducible $T_\parzero$-modules. If we look at the possible conformal weights of these modules, the only possibility is that $\Com_U(L(\half,0))=T_\parzero$ (actually, $(T_\parzero, L(\half,0))$ turns out to be a dual pair for both $U$ and $V_\parzero$).

By Corollary \ref{cor:unitarity_holo_c<24}, $T$ is unitary. By Remark \ref{rem:graded_tensor_product}, if we equip $V$ with the unitary structure induced by the graded tensor product $T\hat{\otimes} F$, then $T_\parzero\otimes L(\half,0)$ turns out to be a unitary subalgebra of $(T\hat{\otimes} F)_\parzero= V_\parzero$ and of $V$ too. Recall that $V_\parzero$ is the fixed-point subalgebra of $U$ by some automorphism of order $2$. Recall also that $U$ is unitary, see Remark \ref{rem:holo_voas_cc_24}. By \cite{GN03, DZ08}, $U$ is finitely generated as it is strongly rational. By \cite[Proposition A.5]{CGGH23}, we can choose the unitary structure on $U$ in such a way that $V_\parzero$ is a unitary subalgebra of $U$ and thus also $T_\parzero$, $L(\half,0)$ and their tensor product are.

Recall that $L(\half, 0)$ satisfies the hypotheses of Theorem \ref{theo:strong_braiding_complete_energy_boundedness}, see Remark \ref{rem:examples_strong_braiding_complete_energy_boundedness}.
Then the conclusion follows noting that now we have the same conditions as in the case (i) for $T_\parzero$ and $L(\half,0)$ here in place of $W$ and $V_K$ there. 
Finally, if $V\cong VB^\natural\hat{\otimes}F$, then most of the desired statements have been already given in the proof of \cite[Theorem 7.24]{CGH} in a very similar way, but using as $U$ here the moonshine VOA $V^\natural$ there. Then the missing parts follow from the argument above.
\end{proof}

\begin{rem}  \label{rem:uniqueness_net_extensions_double_pairs}
	Note that if $V$ satisfies both (i) and (ii) of Proposition 3.8, then we have two different ways to produce a holomorphic graded-local conformal net with non-trivial Fermi part and whose Bose subnet is $\A_{V_\parzero}$. Nevertheless, by the uniqueness statement there, these two procedures bring to the same holomorphic graded-local conformal net.
\end{rem}

\begin{theo}  \label{theo:strong_graded_locality_holo}
	Let $V$ be a nice holomorphic VOSA with central charge $c=24$ and with $V_\parone\not=\{0\}\not=V_1$ or be isomorphic to $VB^\natural\hat{\otimes}F$. If $V$ is not a VOSA of glueing type III and with $V_\half=\{0\}$, then $V$ is strongly graded-local and $\A_V$ is holomorphic.
\end{theo}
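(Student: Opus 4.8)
The plan is to realize $V$ as a $G$-graded simple current extension of the VOA $E\otimes X$ supplied by Proposition \ref{prop:holo_vosas_strong_braiding} and then to conclude via Lemma \ref{lem:strong_graded_locality_simple_current_ext}. First I would organise the case analysis according to $V_\half$. If $V_\half\neq\{0\}$, then either $V\cong VB^\natural\hat{\otimes}F$ or, by the theorem's hypothesis, $V_\half\neq\{0\}\neq V_1$; in both situations Proposition \ref{prop:holo_vosas_strong_braiding}(ii) applies, with $E=(\bar{V}\hat{\otimes}F^{l-1})_\parzero$ and $X=F_\parzero\cong L(\half,0)$. If $V_\half=\{0\}$, then $V\not\cong VB^\natural\hat{\otimes}F$ (the latter having non-trivial weight-$\half$ subspace), so $V_\parone\neq\{0\}\neq V_1$, and by the classification in \cite[Sections 6 and 7]{HM} $V$ is of glueing type I, II or III; since the type III case with $V_\half=\{0\}$ is excluded by hypothesis, $V$ is of glueing type I or II, and Proposition \ref{prop:holo_vosas_strong_braiding}(i) applies with $E=W$ and $X=V_K$. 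In every case, Proposition \ref{prop:holo_vosas_strong_braiding} gives that $E\otimes X$ is a full vertex subalgebra of $V$ which is strongly local, completely unitary and completely energy-bounded, all of whose unitary intertwining operators satisfy the strong intertwining property and the strong braiding, that the VOSA $V$ is energy-bounded, and it produces a holomorphic graded-local conformal net $\A_{E\otimes X}^V$ extending $\A_E\otimes\A_X$, with non-trivial Fermi part, with Bose subnet $\A_{V_\parzero}$, and unique among irreducible graded-local conformal nets with non-trivial Fermi part and Bose subnet isomorphic to $\A_{V_\parzero}$.

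Next I would verify that $V=\bigoplus_{\xi\in G}V^\xi$ is a $G$-graded simple current extension of $E\otimes X$ for some finite abelian group $G$. In case (i) this is immediate: $\Rep(W\otimes V_K)\cong\mathcal{C}(A_W\times A_K)$ is pointed, so by Lemma \ref{lem:simple_current_extensions} any simple CFT type VOSA extension of $W\otimes V_K$, in particular $V$, is a $G$-graded simple current extension. In case (ii), $\Rep(T_\parzero\otimes L(\half,0))$ (where $T:=\bar{V}\hat{\otimes}F^{l-1}$) is a Deligne product of two Ising-type categories, hence not pointed; instead I would use the explicit decomposition \eqref{eq:simple_currents_dual_pair_type_III_fermion} of $V$ over $E\otimes X$ (and the analogous four-term decomposition when $V\cong VB^\natural\hat{\otimes}F$): by Remark \ref{rem:niceness_even_odd_parts} applied to $T$ and to $F$, the module $T_\parone$ is a $\Z_2$-simple current of $T_\parzero$ and $L(\half,\half)$ is a $\Z_2$-simple current of $L(\half,0)$, so the four summands of $V$ are all simple currents of $T_\parzero\otimes L(\half,0)$ and form a copy of $\Z_2\times\Z_2$ under $\boxtimes$; the first part of Lemma \ref{lem:simple_current_extensions} then again provides a $G$-grading.

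Then I would check the remaining hypotheses of Lemma \ref{lem:strong_graded_locality_simple_current_ext} for the base $E\otimes X$ and the extension $V$: strong locality and complete unitarity of $E\otimes X$, and energy-boundedness of the VOSA $V$, come from Proposition \ref{prop:holo_vosas_strong_braiding}; for each $\xi\in\dot{G}$ I would take $w^\xi$ a non-zero vector in the lowest $L_0$-weight space of $V^\xi$, which is of positive weight since $E\otimes X$ is nice (Remark \ref{rem:niceness_completely_unitary_voas}) and $V^\xi$ is a non-trivial irreducible $(E\otimes X)$-module, so that $w^\xi$ is primary, in particular quasi-primary; finally, the strong intertwining property of the action of $w^\xi$ on the subcategory $\mathcal{G}\subseteq\Repu(E\otimes X)$ generated by the simple currents, and the strong braiding property for the actions of the $w^\xi$ and $w^\eta$, hold because Proposition \ref{prop:holo_vosas_strong_braiding} grants both of these properties, together with complete energy-boundedness, for all unitary intertwining operators of $E\otimes X$. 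Lemma \ref{lem:strong_graded_locality_simple_current_ext} then yields that $V$ is strongly graded-local. For the final assertion, $\A_V$ is then well-defined, has Bose subnet $(\A_V)_\parzero=\A_{V_\parzero}$ by Remark \ref{rem:properties_strongly_graded-local_VOSAs}, and has non-trivial Fermi part since $V_\parone\neq\{0\}$; by the uniqueness clause of Proposition \ref{prop:holo_vosas_strong_braiding} it is isomorphic to $\A_{E\otimes X}^V$, which is holomorphic.

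The substantive content lies entirely in Proposition \ref{prop:holo_vosas_strong_braiding} and Lemma \ref{lem:strong_graded_locality_simple_current_ext}; the argument above is an assembly of these inputs together with the classification of \cite{HM}. The only genuinely delicate internal point is case (ii): there $\Rep(E\otimes X)$ is not pointed, so one cannot invoke Lemma \ref{lem:simple_current_extensions} through pointedness and must instead read off from \eqref{eq:simple_currents_dual_pair_type_III_fermion} that $V$ is nonetheless a $G$-graded simple current extension, after which the choice of the quasi-primary generators $w^\xi$ in the lowest-weight spaces of the $V^\xi$ makes the hypotheses of Lemma \ref{lem:strong_graded_locality_simple_current_ext} immediately available from Proposition \ref{prop:holo_vosas_strong_braiding}.
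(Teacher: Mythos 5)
Your proposal is correct and follows essentially the same route as the paper: reduce to cases (i)/(ii) of Proposition \ref{prop:holo_vosas_strong_braiding}, exhibit $V$ as a $G$-graded simple current extension of $E\otimes X$ (via pointedness and Lemma \ref{lem:simple_current_extensions} in case (i), via the explicit decomposition \eqref{eq:simple_currents_dual_pair_type_III_fermion} in case (ii)), apply Lemma \ref{lem:strong_graded_locality_simple_current_ext}, and deduce holomorphicity of $\A_V$ from the uniqueness clause of Proposition \ref{prop:holo_vosas_strong_braiding}. Your write-up is in fact slightly more explicit than the paper's on two points it leaves implicit, namely the non-pointedness of $\Rep(T_\parzero\otimes L(\half,0))$ in case (ii) and the choice of the quasi-primary vectors $w^\xi$ in the lowest weight spaces.
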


\begin{proof}
	Note that $V$ must fall in at least one of the cases (i) or (ii) of Proposition \ref{prop:holo_vosas_strong_braiding}.
	Therefore $V$ is energy-bounded. 
	Moreover, the tensor product $E\otimes X$ in Proposition \ref{prop:holo_vosas_strong_braiding} is a strongly local completely unitary VOA and its unitary intertwining operators satisfy the strong intertwining property and the strong braiding.
	By \cite[Section 6.1]{HM}, cf.\ also Lemma \ref{lem:simple_current_extensions}, we can see that if $V$ is of glueing type I or II, then it is a simple current extension of the VOA $W\otimes V_K$. If $V$ is in the case (ii) of Proposition \ref{prop:holo_vosas_strong_braiding}, then it is a simple current extension of the VOA $T_\parzero\otimes L(\half,0)$, see \eqref{eq:simple_currents_dual_pair_type_III_fermion}. 
	Therefore, $E\otimes X$ satisfies the hypotheses of Lemma \ref{lem:strong_graded_locality_simple_current_ext}, so that $V$ is strongly graded-local.
	Finally, $(\A_V)_\parzero=\A_{V_\parzero}$ by Remark \ref{rem:properties_strongly_graded-local_VOSAs} and thus $\A_V$ is isomorphic to $\A_{E\otimes X}^V$ by the uniqueness statement of Proposition \ref{prop:holo_vosas_strong_braiding}, see also Remark \ref{rem:uniqueness_net_extensions_double_pairs}, implying that $\A_V$ is holomorphic. Note that the strong graded locality of $VB^\natural\hat{\otimes}F$ was already proved by \cite[Theorem 7.24 and Corollary 6.6]{CGH} with similar technique.
\end{proof}

Strongly rational holomorphic VOSAs with central charge $c\leq 12$ were explicitly classified in \cite[Theorem 3.1]{CDR18} and proved to be strongly graded-local in \cite[Theorem 7.22]{CGH}. We improve the latter result with the following:

\begin{cor}  \label{cor:strong_graded_locality_c_less_24}
	Let $V$ be a nice holomorphic VOSA with central charge $c<24$. Suppose that $V$ is not a VOSA with central charge $c=\frac{47}{2}$ and with $V_\half=\{0\}=V_1$, unless it is isomorphic to $VB^\natural$. Then $V$ is strongly graded-local. In particular, it gives rise to a holomorphic graded-local conformal net $\A_V$.
\end{cor}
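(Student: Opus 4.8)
The plan is to reduce the statement to the central charge $24$ case already settled in Theorem \ref{theo:strong_graded_locality_holo}, using the Free-Fermion Splitting exactly as in the proof of Corollary \ref{cor:unitarity_holo_c<24}. By Proposition \ref{prop:free-fermion-splitting} one may pick $l\in\Zplus$ such that $U:=V\hat{\otimes}F^l$ is a nice holomorphic VOSA with central charge $c+\frac{l}{2}=24$; since $l\geq 1$, the subspaces $U_\half$ and $U_\parone$ both contain the corresponding non-zero subspaces of $\C\Omega^V\otimes F^l$, hence are non-zero. In particular $U$ can never be ``of glueing type III with $U_\half=\{0\}$'', so this reduction automatically disposes of the one exceptional family left out of Theorem \ref{theo:strong_graded_locality_holo}.

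Next I would check that $U$ meets the hypotheses of Theorem \ref{theo:strong_graded_locality_holo}. If $U_1\not=\{0\}$ this is immediate. If instead $U_1=\{0\}$, then $U$ is a nice holomorphic VOSA with $c=24$, $U_\parone\not=\{0\}$ and $U_1=\{0\}$; by the classification recalled in the Introduction (see \cite[Section 7 and Theorem 8.3]{HM}) the only such VOSAs, up to hypothetical fake copies of $VB^\natural\hat{\otimes}F$, are $VB^\natural\hat{\otimes}F$ and the odd moonshine VOSA $VO^\natural$. The latter is excluded because $(VO^\natural)_\half=\{0\}$ while $U_\half\not=\{0\}$, and a fake copy of $VB^\natural\hat{\otimes}F$ is excluded as well: by the bijectivity part of Proposition \ref{prop:free-fermion-splitting} (the stump of $U$ is the canonical $\Com_U(W(U_\half))$), such a $U$ would force $V$ to be a fake copy of $VB^\natural$, contrary to our hypothesis. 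Hence $U\cong VB^\natural\hat{\otimes}F$, which is explicitly allowed in Theorem \ref{theo:strong_graded_locality_holo}; moreover, unravelling the splitting in this case gives $l=1$ and $V\cong VB^\natural$, which is in any case already known to be strongly graded-local by \cite[Theorem 7.24]{CGH}. So in all cases Theorem \ref{theo:strong_graded_locality_holo} applies to $U$, giving that $U$ is strongly graded-local and that $\A_U$ is holomorphic.

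It then remains to transfer strong graded locality down to $V$. Equipping $U$ with the unitary structure induced by the graded tensor product $V\hat{\otimes}F^l$ (cf.\ Remark \ref{rem:graded_tensor_product}), the copy $\C\Omega^V\otimes F^l$ of $F^l$ inside $U$ is $L_1$- and PCT-invariant, hence a unitary subalgebra whose coset $\Com_U(\C\Omega^V\otimes F^l)$ is precisely $V$ (see Remark \ref{rem:coset_subalgebras}), so $V$ is a unitary subalgebra of the strongly graded-local VOSA $U$. Since unitary subalgebras of strongly graded-local VOSAs are strongly graded-local (Remark \ref{rem:properties_strongly_graded-local_VOSAs}), $V$ is strongly graded-local. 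Finally, by Remark \ref{rem:properties_strongly_graded-local_VOSAs} one has $\A_U=\A_{V\hat{\otimes}F^l}\cong\A_V\hat{\otimes}\F^l$; since holomorphicity of a graded tensor product of (completely rational) graded-local conformal nets forces holomorphicity of each tensor factor, $\A_V$ (and $\F^l$) are holomorphic. The only point requiring genuine care is the bookkeeping in the case $U_1=\{0\}$ — correctly eliminating $VO^\natural$ and threading the shorter moonshine hypothesis through the bijection of Proposition \ref{prop:free-fermion-splitting}; everything else is a routine transfer of properties along the Free-Fermion Splitting.
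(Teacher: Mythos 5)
Your proposal is correct and follows essentially the same route as the paper: reduce to central charge $24$ via the Free-Fermion Splitting, identify the exceptional case $U_1=\{0\}$ with $V\cong VB^\natural$ and $U\cong VB^\natural\hat{\otimes}F$, apply Theorem \ref{theo:strong_graded_locality_holo} to $U$, realize $V$ as a unitary subalgebra (the coset of $F^l$) to inherit strong graded locality, and deduce holomorphicity of $\A_V$ from that of $\A_U\cong\A_V\hat{\otimes}\F^l$. Your treatment of the $U_1=\{0\}$ case is slightly more verbose than the paper's (which gets $l=1$ and $V_\half=\{0\}=V_1$ directly from $U_1=\{0\}$ rather than via the classification), but the content is the same.
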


\begin{proof}
	By Proposition \ref{prop:free-fermion-splitting}, there exists $l\in \Zplus$ such that $V\hat{\otimes} F^l$ is a nice holomorphic VOSA with central charge $c+\frac{l}{2}=24$, say $U$, such that $U_\half\not=\{0\}\not=U_\parone$. 
	If $U_1=\{0\}$, then $l=1$ and $V_\half=\{0\}=V_1$, so that $V\cong VB^\natural$ and $U\cong VB^\natural\hat{\otimes}F$.
	Therefore $U$ is strongly graded-local by Theorem \ref{theo:strong_graded_locality_holo}. By Corollary \ref{cor:unitarity_holo_c<24}, Remark \ref{rem:graded_tensor_product} and the unitarity of $F$, it follows that $V$ is a unitary subalgebra of $U$ and thus it is strongly graded-local thanks to Remark \ref{rem:properties_strongly_graded-local_VOSAs}.  
	If $\A_V$ is not holomorphic, then we can construct, see \cite[Section 2.6]{CKL08}, at least one non-trivial irreducible representation on the irreducible graded-local conformal net $\A_V\hat{\otimes} \F^l$. This leads to a contradiction as $\A_V\hat{\otimes} \F^l$ is isomorphic to $\A_U$, which is instead holomorphic. 
\end{proof}

\begin{rem}
	Note that Corollary \ref{cor:strong_graded_locality_c_less_24} proves that the \textit{super-moonshine net} $\A^{f\natural}:=\A_{V^{f\natural}}$, already defined in \cite[Theorem 7.21]{CGH} from the \textit{super-moonshine VOSA} $V^{f\natural}$, see \cite{Dun07} and also \cite{DM15}, is indeed holomorphic as the latter has central charge $c=12$.
	Moreover, it also proves the holomorphicity of the \textit{shorter moonshine net} $\A_{VB^\natural}$, already defined in \cite[Theorem 7.24]{CGH}.
\end{rem}

\begin{rem}
	One could wonder why the same technique used to prove the strong graded-locality for nice holomorphic VOSAs with central charge $c=24$ and of glueing type I and II does not work for the ones of glueing type III. The reason is that in the latter case, the Heisenberg coset $W$ does not form a dual pair with a lattice VOA in one of the holomorphic VOAs obtained from the Neighbourhood Graph Method, as we can see from \cite[Section 6.1]{HM}. In turn, this prevents us to apply (iv) of Theorem \ref{theo:strong_braiding_complete_energy_boundedness} in the proof of the case (i) of Proposition \ref{prop:holo_vosas_strong_braiding}, which gives the complete energy boundedness of $W$, the strong intertwining property and the strong braiding for its unitary intertwining operators.
\end{rem}

\begin{con}   \label{con:strong_graded_locality_holo_VOSAs}
Every nice holomorphic VOSA $V$ with central charge $c\leq 24$ is strongly graded-local. In particular, it gives rise to a holomorphic graded-local conformal net $\A_V$.
\end{con}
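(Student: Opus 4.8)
The plan is to complete the cases left open by Theorem~\ref{theo:strong_graded_locality_holo} and Corollary~\ref{cor:strong_graded_locality_c_less_24}. By Corollary~\ref{cor:strong_graded_locality_c_less_24} the conjecture already holds for $c<24$, and at $c=24$ Theorem~\ref{theo:strong_graded_locality_holo} covers every nice holomorphic VOSA with $V_\parone\not=\{0\}$ except those of glueing type III with $V_\half=\{0\}$ — both statements up to hypothetical fake copies of $VB^\natural$, which the shorter moonshine uniqueness conjecture rules out. Thus, conditionally on that conjecture, the remaining open cases are precisely the nice holomorphic VOSAs $V$ with $c=24$, $V_\parone\not=\{0\}$, $V_\half=\{0\}$ and of glueing type III — finitely many by \cite[Theorem 8.3]{HM}, including the odd moonshine VOSA $VO^\natural$. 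Once strong graded locality is proved for these, the holomorphicity of $\A_V$ follows from the Bose-part/Neighbourhood-Graph argument in the proof of Theorem~\ref{theo:strong_graded_locality_holo}.

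For such a $V$, the structural analysis of \cite[Sections 6--7]{HM} still produces a dual pair $(W,V_K)$ in $V_\parzero$, with $W=\Com_{V_\parzero}(H)$ the Heisenberg coset and $V_K$ a lattice VOA (with $V_K=\C$ for $VO^\natural$), and exhibits $V$ as a $G$-graded simple current extension of $W\otimes V_K$ for $G=\Z_2$ or $\Z_2\times\Z_2$. Hence, by Lemma~\ref{lem:strong_graded_locality_simple_current_ext} together with Lemma~\ref{lem:simple_current_extensions}, and using that the energy boundedness of $V$ follows from the complete energy boundedness of $W\otimes V_K$ as at the end of the proof of Proposition~\ref{prop:holo_vosas_strong_braiding}, it suffices to show that $W\otimes V_K$ is strongly local, completely energy-bounded, and that all its unitary intertwining operators satisfy the strong intertwining property and the strong braiding. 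Complete unitarity of $W$ and of $W\otimes V_K$ is already available from Theorem~\ref{theo:unitarity_holo_VOSAs_cc_24} and Proposition~\ref{prop:unitarity_moonshine_type_VOSAs}, and $V_K$ has all the required properties by Remark~\ref{rem:examples_strong_braiding_complete_energy_boundedness}; so by Theorem~\ref{theo:strong_braiding_complete_energy_boundedness}(v) the whole problem reduces to establishing these properties for the Heisenberg coset $W$ alone.

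Here is the main obstacle, exactly the one flagged in the final Remark of Section~\ref{subsec:gl_conformal_nets}: in glueing type III, $W$ is \emph{not} the coset of a lattice VOA inside one of the holomorphic $c=24$ VOAs produced by the Neighbourhood Graph Method, so the mechanism of Proposition~\ref{prop:holo_vosas_strong_braiding}(i) — deducing the required properties of $W$ from Theorem~\ref{theo:strong_braiding_complete_energy_boundedness}(iv) applied inside a strongly local holomorphic VOA — is unavailable. The natural substitute is the description $W\cong V_{\Lambda_\mu}^{\langle\hat\mu\rangle}=\Com_{V_\Lambda^{\langle\phi_\mu\rangle}}(V_{\Lambda^\mu})$ from \cite[Theorem 8.2]{HM}: since $V_{\Lambda^\mu}$ is a lattice VOA and $V_\Lambda^{\langle\phi_\mu\rangle}$ is strongly local (being a unitary subalgebra of the strongly local $V_\Lambda$) and completely unitary, one further application of Theorem~\ref{theo:strong_braiding_complete_energy_boundedness}(iv) would conclude, \emph{provided} the cyclic orbifold $V_\Lambda^{\langle\phi_\mu\rangle}$ of the Leech lattice VOA — and, for $VO^\natural$, the orbifold $V_\Lambda^+$ — is shown to be completely energy-bounded and to satisfy \cite[Condition B]{Gui}. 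Proving this is the crux of the remaining work: it amounts to extending Gui's energy-bounds and strong-braiding analysis from lattice VOAs (Remark~\ref{rem:examples_strong_braiding_complete_energy_boundedness}) to their finite cyclic orbifolds — plausible, as such orbifolds are strongly local with pointed module categories, but not yet in the literature. A complementary, purely operator-algebraic route for $VO^\natural$ and the other non-typical entries would be to realize their odd parts as twisted DHR sectors of $\A_{V_\Lambda^+}=\A_{V_\Lambda}^{\Z_2}$ and to build the graded-local net extension directly via \cite[Section 2.6]{CKL08}, afterwards identifying it with $\A_{(V,\scalar)}$ through the energy bounds; this too ultimately rests on energy boundedness of the relevant intertwining operators of $V_\Lambda^+$. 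In either route, once $W$ is controlled, strong graded locality of $V$ follows from Lemma~\ref{lem:strong_graded_locality_simple_current_ext}, and $\A_V$ is holomorphic as in the proof of Theorem~\ref{theo:strong_graded_locality_holo}.
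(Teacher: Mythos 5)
The statement you are addressing is stated in the paper as a \emph{conjecture} (Conjecture \ref{con:strong_graded_locality_holo_VOSAs}); the paper offers no proof of it, only the partial results Theorem \ref{theo:strong_graded_locality_holo} and Corollary \ref{cor:strong_graded_locality_c_less_24} together with a closing remark explaining exactly why the method breaks down for the glueing type III models with $V_\half=\{0\}$. Your proposal does not close this gap either, and you say so yourself: the entire argument funnels into the claim that the cyclic orbifold $V_\Lambda^{\langle\phi_\mu\rangle}$ (and, for $VO^\natural$, the $\Z_2$-orbifold $V_\Lambda^+$) is completely energy-bounded and satisfies \cite[Condition B]{Gui}, which you correctly flag as ``not yet in the literature''. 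Strong locality and complete unitarity of these orbifolds are indeed available (as unitary subalgebras of $V_\Lambda$, and via Theorem \ref{theo:complete_unitarity_holo_orbifold}), but Theorem \ref{theo:strong_braiding_complete_energy_boundedness}(iv) cannot be applied to the inclusion $V_{\Lambda^\mu}\otimes V_{\Lambda_\mu}^{\langle\hat{\mu}\rangle}\subset V_\Lambda^{\langle\phi_\mu\rangle}$ without first exhibiting, for a generating set of $\Repu(V_\Lambda^{\langle\phi_\mu\rangle})$, quasi-primary vectors whose actions satisfy the strong intertwining property; the irreducible objects there involve the $\phi_\mu$-twisted sectors of $V_\Lambda$, and no energy bounds for the corresponding intertwining operators are currently known. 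So the crux is genuinely missing, not merely deferred, and what you have written is a road map to an open problem rather than a proof.

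That said, your surrounding analysis is accurate and matches the paper's own assessment: the reduction of the conjecture to the $c=24$, glueing type III, $V_\half=\{0\}$ cases via Theorem \ref{theo:strong_graded_locality_holo} and Corollary \ref{cor:strong_graded_locality_c_less_24} (modulo the shorter moonshine uniqueness conjecture, on which the $c=\frac{47}{2}$ exclusion in Corollary \ref{cor:strong_graded_locality_c_less_24} also silently depends), the further reduction via Lemma \ref{lem:strong_graded_locality_simple_current_ext} and Theorem \ref{theo:strong_braiding_complete_energy_boundedness}(v) to the Heisenberg coset $W$ alone, and the identification of $W\cong V_{\Lambda_\mu}^{\langle\hat{\mu}\rangle}=\Com_{V_\Lambda^{\langle\phi_\mu\rangle}}(V_{\Lambda^\mu})$ as the natural replacement for the unavailable lattice dual pair are all consistent with the obstruction described in the final remark of Section \ref{subsec:gl_conformal_nets}. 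Be careful, however, with the case $V_1=\{0\}$ (i.e.\ $VO^\natural$): there is no Heisenberg subalgebra, the ``dual pair'' degenerates to $W=V_\parzero=V_\Lambda^+$ itself, and the reduction buys nothing --- one must attack the orbifold directly, which is again the same open ingredient.
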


\bigskip
\noindent
{\small
	{\bf Acknowledgements.}
	The author would like to sincerely thank S.\ Carpi and R.\ Hillier for the constant support, for reading and giving useful comments on a preliminary version of this work. 
	He is extremely grateful to S.\ Carpi for the very helpful and inspiring discussions. 
	He would also like to thank S.\ Carpi and L.\ Giorgetti for their hospitality at the Department of Mathematics of the University of Rome ``Tor Vergata''.
	The author is supported by the Leverhulme Trust Research Project Grant RPG-2021-129.

\end{document}